\newtheorem{rema}{Remark}
\newtheorem{lemma}{Lemma}
\newtheorem{corollary}{Corollary}
\newtheorem{prop}{Proposition}
\newtheorem{thm}{Theorem}
\newtheorem{defin}{Definition}
\def \n{\Vert}
\newcommand{\E}{\mathop{\mathbb{E}}}
\renewcommand{\P}{\mathop{\mathbb{P}}}
\def\R{{\mathbb{R}}}
\def\N{{\mathbb{N}}}
\def\F{{\cal{F}}}
\def\|{\,|\,}
\def\bn#1\en{\begin{align*}#1\end{align*}}
\def\bnn#1\enn{\begin{align}#1\end{align}}
\title{Worst-risk minimization in generalized structural equation models}
\begin{document} 
\date{}
\author{Philip Kennerberg and Ernst C. Wit}
\maketitle


\begin{abstract}
We consider extended structural equation models (SEMs), whereby a target of interest and its covariates are considered in several shifted environments. Given $k\in\N$ shift environments we consider the collection of all shifts that are at most $\gamma$-times as strong as a given weighted linear combination of these $k$ shifts together with its associated worst (quadratic) risk. This worst risk has a convenient decomposition with an explicit population minimizer. We consider its corresponding plug-in estimator. We show that this plug-in estimator is (almost surely) consistent, and satisfies a concentration in measure result. The solution to the worst risk minimizer is rather reminiscent of the corresponding ordinary least squares solution in that it is product of a vector and an inverse of a Grammian matrix. Due to this, the central moments of the plug-in estimator is not well-defined in general, but we instead consider these moments conditioned on the Grammian inverse being bounded by some given constant. We also study conditional variance of the estimator with respect to a natural filtration for the incoming data. Similarly we consider the conditional covariance matrix with respect to this filtration and prove a bound for the determinant of this matrix. This SEM model generalizes the linear models that have been studied previously for instance in the setting of casual inference or anchor regression but the concentration in measure result and the moment bounds are new even in the linear setting.
\end{abstract}

\section{Introduction}
Simple linear structural equation models, whereby a target is expressed via a linear relationship to its covariates, have previously been studied in various  casual inference settings. These models have a structure of the form
\begin{equation}\label{Geneq}
	\begin{bmatrix}
		Y\\
		X
	\end{bmatrix} = 
	B\cdot\begin{bmatrix}
		Y\\
		X
	\end{bmatrix}
	+\epsilon
	+M,
\end{equation}
where $\epsilon$ is some noise term, $M$ is some random (or deterministic) vector and $B$ is a fixed deterministic matrix. \cite{kania2022causal} consider two systems of the type \eqref{Geneq}, one where $M=0$, called an observational environment and one with $M\not =0$ called a shifted environment. This setup was also used in anchor regression \citep{Rot}.

In this paper, we consider a number of extensions of the setting described above. First, instead of a simple linear SEM, we allow the matrix $B$ to be random. From a strictly causal perspective, this may seem unusual. However, from a risk minimization perspective this extends the range of scenarios dramatically. We consider an enormous range of systems, where in some way the covariates and target are related to each other. It allows for uncertainty in the structure of the causal system, whereby the causal structure can change randomly. Secondly, we consider any finite number of shifted environments. This can be a more realistic representation of available data in many situations. Thirdly, whereas \cite{kania2022causal} and \cite{Rot} do not allow for shifts on the target, this paper explicitly allows for environments with such shifts. 

The paper starts by formally introducing the model in section~\ref{sec:model}. We assume that our target and $p$ covariates are related by an extended SEM in an observational and $k\in\N$ different shifted environments. We introduce the weighted space $C^\gamma_w$ that consists of all shifts in $L^2(\P)$ that are no more than $\gamma$ times stronger than a weighted mean of the given shifts (weighted by $w\in\R^p$). We show that the supremum over all quadratic risks, $R_{\tilde{A}\in C^\gamma_w}(\beta)$, in this space has a certain worst-risk decomposition. We prove that under a weak assumption there exists a unique $\arg\min$ solution over all $\beta$s, depending on $\gamma$. This is defined as the worst risk minimizer, $\beta_\gamma$. We show that this $\beta_\gamma$ has a closed form solution. 

In section~\ref{sec:estimation} we define the plug-in estimator of $\beta_\gamma$, $\hat{\beta}_\gamma$ under the assumptions that samples arrive through independent channels across the different environments. This is a flexible framework that allows for very asymmetric dataflow. Theorem~\ref{converge} in section~\ref{sec:consistency} shows that the plug-in estimator is consistent in the almost sure sense. Furthermore, we establish a concentration in measure result for the estimator as well. The concentration in measure result is then given a few applications. In section~\ref{sec:variance} we study conditional absolute central moments for the estimator. Due to the structure of $\hat{\beta}_\gamma$ involving a matrix inversion, the unconditional moments may not exist. By conditioning on the fact that this inverse matrix is norm-bounded by some given constant, we establish a bound for the conditional $q^{\mbox{\scriptsize th}}$ absolute central moment in Theorem~\ref{varthm1}. It shows that under sufficient moment conditions on the target and covariates, the rate of decay is $n^{-q/2}$, where $n$ denotes the minimum samples across the different environments. In section~\ref{sec:stopping} we consider the empirical data arrival process as a process in discrete time and consider filtrations with respect to this time index. We consider both filtrations based on the target and covariate samples and filtrations based only on the covariates. By considering certain stopping times for these filtrations,  Theorem~\ref{aopt} finds almost sure bounds for the conditional variance for our estimator, whereas Theorem~\ref{covar} gives an explicit expression of the conditional covariance matrix and an a.s. bound of its determinant.

We conclude in section~\ref{sec:conclusion} with a discussion where we highlight the fact that many of our results can easily be extended to much more general estimators. Several proofs have been delegated to the appendix.

\section{Risk minimization in a multi-environment setting}
\label{sec:model}
Assume we are given a probability space $\left(\Omega,\mathcal{F},\P \right)$. For $1\le i\le k$, $k\in\N$, let $Y^{A_i}\in\R$, $X^{A_i}\in\R^p$ be random variables and vectors respectively on this space that are solutions to the following structural equations, 
\begin{equation}\label{SEMA}
	\begin{bmatrix}
		Y^{A_i}\\
		X^{A_i}
	\end{bmatrix} = 
	B(\omega)\cdot\begin{bmatrix}
		
		Y^{A_i}\\
		X^{A_i}
	\end{bmatrix}
	+\epsilon_{A_i}
	+
	A_i
\end{equation}
where $B(\omega)$ is a random real-valued $(p+1)\times(p+1)$ matrix such that $I-B$ is full rank a.s., the components of $A_i\in\R^{p+1}$, $\epsilon_{A_i}\in\R^{p+1}$ are in $L^2(\P)$ for $1\le i\le k$. Note that $B$ is the same random matrix for all equation systems. We will refer to these $k$ equation systems as environments. Stochastically, the roles of $X^{.}$ and $Y^{.}$ are completely identical, but our prediction focus is on the \emph{target} $Y^.$. The random vector $A_i\in \R^{p+1}$ is called the shift corresponding to environment $i$. Since $I-B$ has full rank a.s., $X^{A_i}$ and $Y^{A_i}$ have unique solutions. We also consider the observational (shift free) environment 
\begin{equation}\label{SEMO}
	\begin{bmatrix}
		Y^{O}\\
		X^{O}
	\end{bmatrix} = 
	B(\omega)\cdot\begin{bmatrix}
		Y^{O}\\
		X^{O}
	\end{bmatrix}
	+\epsilon_{O}.
\end{equation}
We assume that $\epsilon_O$ and $B$ have the same joint law as $\epsilon_{A_i}$ and $B$ for all $1\le i\le k$. Denote $\sigma(B)$ as the sigma algebra generated by the entries of $B$ and let $\E\left[.\|B\right]$ denote conditional expectation with respect to $\sigma(B)$. We will assume that the noise terms are uncorrelated with the shifts given $B$ i.e. $\E\left[\epsilon_{A_i}A_i^T\| B\right]=0$ a.s..
\\
Let $\n.\n$ be the Euclidean norm and define the set of weights, $\mathcal{W}=\left\{w\in\R^k: \n w\n=1\right\}$ (to be clear, these weights are deterministic and moreover this choice of $\mathcal{W}$ is in some sense arbitrary, any compact set in $\R^k$ works). For a vector $w\in\mathcal{W}$ we define $A_w=\sum_{i=1}^kw_iA_i$. Given a joint distribution $F_A$ on $\R^{p+1}$ whose marginals have finite second moments, let $(\Omega_A,\mathcal{F}_A,\P_A)$ denote an extension of the original probability space that also supports a random vector $A\in\R^{p+1}$ distributed according to $F_A$, independent from $B$, and a random vector $\epsilon_A$ with the same distribution as $\epsilon_O$ and that is also independent from both $A$ and $B$. On this space we may define $Y^{A}\in\R$ and $X^{A}\in\R^p$ through,
\begin{equation}\label{A}
	\begin{bmatrix}
		Y^{A}\\
		X^{A}
	\end{bmatrix} := 
	(I-B)^{-1}(\epsilon_{A}
	+
	A),
\end{equation}
which is the solution to the structural equation system,
\begin{equation*}
	\begin{bmatrix}
		Y^{A}\\
		X^{A}
	\end{bmatrix} = 
	B(\omega)\cdot\begin{bmatrix}
		
		Y^{A}\\
		X^{A}
	\end{bmatrix}
	+\epsilon_{A}
	+
	A.
\end{equation*}
Let $\mathcal{P}$ denote some set of distributions on $\R^{p+1}$ whose marginals have finite second moments and such that $F_{A_w}\in\mathcal{P}$ for any $w\in\mathcal{W}$. We define the shiftspace of distributions,
\begin{align}\label{shiftspace}
	C^\gamma_w(B)=\left\{F_A\in \mathcal{P}:\E_A\left[AA^T\|B\right]\preccurlyeq \gamma \E_A\left[A_wA_w^T\|B\right], \P_A\textsf{- a.s. }\right\},
\end{align} 
where $\preccurlyeq$ denotes less or equal under the Loewner order. The following proposition gives some important examples of how the shift-space is affected by certain properties of $B$.
\begin{prop}
	\begin{itemize}
		\item[1)] if $B$ is any non-zero deterministic matrix then \eqref{SEMA} and \eqref{SEMO} become linear SEMs and \eqref{shiftspace} reduces to
		$$C^\gamma_w(B)=\left\{F_A\in \mathcal{P}:\E_A\left[AA^T\right]\preccurlyeq \gamma \E\left[A_wA_w^T\right]\right\}.$$ 
		\item[2)] If $B$ is a simple map taking (deterministic) values $\{B_1,...,B_m\}$ then \eqref{SEMA} and \eqref{SEMO} become piecewise linear SEMs, while \eqref{shiftspace} reduces to 
		$$C^\gamma_w(B)=\left\{F_A\in \mathcal{P}:\E_A\left[AA^T1_{B=B_l}\right]\preccurlyeq \gamma \E\left[A_wA_w^T1_{B=B_l}\right], 1\le l\le m\right\}.$$
		\item[3)] If $B$ is independent of $A_1,...,A_k$ then \eqref{shiftspace} reduces to 
		$$C^\gamma_w(B)=\left\{F_A\in \mathcal{P}:\sigma(A)\perp\sigma(B),\E_A\left[AA^T\right]\preccurlyeq \gamma \E\left[A_wA_w^T\right]\right\}.$$ 
		and the condition $\E_A\left[\epsilon A\|B\right]=0$ is equivalent to $\epsilon\E_A\left[A\right]=0$ a.s.. This means that all shifts have zero mean unless there is no noise a.s..
	\end{itemize}
\end{prop}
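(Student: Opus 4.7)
The three items are direct consequences of how conditional expectations with respect to $\sigma(B)$ collapse under each hypothesis on $B$; I would treat them in turn.

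\textbf{Parts (1) and (2).} If $B$ is deterministic then $\sigma(B)=\{\emptyset,\Omega\}$ is trivial, so $\E_A[\cdot\|B]=\E_A[\cdot]$ and the Loewner-order condition in \eqref{shiftspace} reduces immediately to its unconditional form. If instead $B$ is simple with values $\{B_1,\ldots,B_m\}$, then $\sigma(B)$ is generated by the atoms $\{B=B_l\}$, and for any integrable random matrix $X$ one has
\begin{equation*}
\E_A[X\|B]=\sum_{l:\P(B=B_l)>0}\frac{\E_A[X\,1_{B=B_l}]}{\P(B=B_l)}\,1_{B=B_l}.
\end{equation*}
The conditional Loewner inequality holds a.s.\ iff it holds on each atom; multiplying through by the positive scalar $\P(B=B_l)$ preserves the order and yields the stated form, while atoms with $\P(B=B_l)=0$ contribute trivially.

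\textbf{Part (3).} Under the joint independence $B\perp(A_1,\ldots,A_k)$, the shift $A_w=\sum_{i}w_iA_i$ is a Borel function of $(A_1,\ldots,A_k)$ and hence also independent of $B$, so $\E_A[A_wA_w^T\|B]=\E_A[A_wA_w^T]$ a.s. On the extended space $A$ is independent of $B$ by construction, so $\E_A[AA^T\|B]=\E_A[AA^T]$ and $\sigma(A)\perp\sigma(B)$ is automatic; substituting into \eqref{shiftspace} gives the stated reduced form.

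For the noise claim, the extended-space construction ensures $\epsilon_A\perp(A,B)$, so conditional on $B$ the noise is independent of $A$, and hence
\begin{equation*}
\E_A[\epsilon_A A^T\|B]=\E_A[\epsilon_A\|B]\,\E_A[A^T\|B]=\E_A[\epsilon_A\|B]\,\E_A[A]^T,
\end{equation*}
using $A\perp B$ in the last equality. Writing $\epsilon:=\E_A[\epsilon_A\|B]$, the constraint $\E_A[\epsilon_A A^T\|B]=0$ a.s.\ is therefore equivalent to $\epsilon\,\E_A[A]^T=0$ a.s. The final remark is the contrapositive: since this is an outer product of a random column vector $\epsilon$ with a deterministic row vector $\E_A[A]^T$, if $\epsilon\neq 0$ with positive probability then $\E_A[A]=0$, so every admissible shift must be zero-mean.

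\textbf{Main obstacle.} The only substantive subtlety lies in part (3), where one must keep careful track of which random variables are independent given $B$ on the extended probability space, and interpret the unspecified symbol $\epsilon$ in the proposition's conclusion as the $\sigma(B)$-measurable projection $\E_A[\epsilon_A\|B]$ rather than the raw noise $\epsilon_A$. Parts (1) and (2) are essentially bookkeeping with the definition of conditional expectation against a trivial or purely atomic $\sigma$-algebra.
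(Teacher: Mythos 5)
Your treatment of parts 1) and 2) is correct and is essentially the paper's argument: the paper also reduces the a.s.\ conditional Loewner inequality to the integrated inequalities $\E_A[AA^T1_D]\preccurlyeq\gamma\E[A_wA_w^T1_D]$ over $D\in\sigma(B)$ and then observes that only the atoms $\{B=B_l\}$ need to be checked (the paper handles 1) as the special case $m=1$ rather than separately, but that is immaterial). The first claim of part 3) also matches: independence of the shifts from $B$ collapses the conditional second moments to unconditional ones.

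Where you genuinely diverge is the noise claim in part 3). The paper invokes the freezing lemma (Williams \S 9.10), treating the noise coordinates as the frozen arguments and integrating out $A$, to obtain $\E_A\left[\epsilon_A A\|B\right]=\epsilon_A\E_A\left[A\right]$ with the \emph{raw} noise $\epsilon_A$ on the right; that is exactly what makes the final sentence of the proposition ("all shifts have zero mean unless there is no noise a.s.") come out, since the dichotomy is then $\E_A[A]=0$ or $\epsilon_A=0$ a.s. You instead condition only on $\sigma(B)$, factor the conditional expectation, and \emph{redefine} $\epsilon:=\E_A\left[\epsilon_A\|B\right]$. Your computation is internally consistent (and is in fact the honest answer for conditioning on $\sigma(B)$ alone; the paper's identity is really the conditional expectation given $\sigma(\epsilon_A,B)$, which is a looseness on the paper's side), but it proves a strictly weaker statement than the one claimed: $\E_A\left[\epsilon_A\|B\right]\E_A[A]^T=0$ only forces the noise to have vanishing conditional mean given $B$, not to vanish a.s. As a proof of the proposition as written, with $\epsilon$ the noise vector itself, this leaves the final assertion unestablished; to recover it you would need to run the freezing-lemma argument the way the paper does, i.e.\ integrate out only $A$ while holding $\epsilon_A$ fixed.
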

\begin{proof}
Since 1) is a special case of both 2) and 3) it suffices to show these last two statements. For 2), we first note that $\E_A\left[AA^T\|B\right]\preccurlyeq \gamma \E_A\left[A_wA_w^T\|B\right]$ a.s. is equivalent to $\E_A\left[AA^T1_D\right]\preccurlyeq \gamma \E\left[A_wA_w^T1_D\right]$ (the subscript of $A$ can be dropped in the last expectation since $A_w$ is measurable with respect to $\mathcal{F}$) for all $D\in\sigma(B)$. Let $C_l=\{B=B_l\}$, for $1\le l\le m$. Since $\sigma(B)=\{\emptyset,C_1,...,C_m\}$, this inequality needs only to be verified for these sets, the inequality restricted to the empty set is tautological and is therefore not included in the condition. As for 3), the first statement follows directly from the independence of the shifts from $B$. For the second statement, note that if we define $f$ by 
	$$\epsilon_A A=\sum_{l=1}^{p+1}\epsilon_A(l)A(l)=f(\epsilon_A(1),...,\epsilon_A(p+1),A(1),...,A(p+1))$$ 
	then if we let 
	$$h(a_1,...,a_{p+1})=\E_A\left[f(a_1,...,a_{p+1},A(1),...,A(p+1))\right]=\sum_{l=1}^{p+1}a_l\E_A[A(l)]$$ 
	it follows that (see for instance section 9.10 in \cite{williams1991probability})
	\begin{align*}
		\E_A\left[\epsilon_A A\|B\right] &= \E_A\left[f(\epsilon_A(1),...,\epsilon_A(p+1),A(1),...,A(p+1))\|B\right]
		\\
		&=h(\epsilon_A(1),...,\epsilon_A(p+1))
		=\sum_{l=1}^{p+1}\epsilon_A(l)\E_A[A(l)]
		=\epsilon_A\E_A[A] \hspace{2mm} a.s.
	\end{align*}
\end{proof}
\subsubsection*{A Short discussion regarding the implications of a random transfer matrix}
A few words about what the implications are of having a random transfer matrix $B$ are in order here. One obvious benefit is that since $B$ is random, this allows for extra randomness not captured by some corresponding linear model. It will allow for hidden confounding that enters multiplicatively in the same manner across all environments (including the observational). It may also help to introduce certain kinds of non-linearity. We can simultaneously fit any $k\in\N$ number of non-linear environments to a system of the type \eqref{SEMA}-\eqref{SEMO} (albeit with different shifts and noise) on the same probability space. Consider first $k\le p+1$ non-linear systems of the form
\begin{equation}\label{SSA}
	\begin{bmatrix}
		Y^{\tilde{A}_i}\\
		X^{\tilde{A}_i}
	\end{bmatrix} = 
	f\left(\begin{bmatrix}
		Y^{\tilde{A}_i}\\
		X^{\tilde{A}_i}
	\end{bmatrix} + \tilde{A}_i\right)
	+\eta_{\tilde{A}_i},
\end{equation}
for $0\le i\le p$ ($i=0$ will correspond to the observational environment where $\tilde{A}_0=0$), where $f:\mathbb{R}^{p+1}\rightarrow\mathbb{R}^{p+1}$ is a measurable function and $A_i,\eta_{A_i}\in\R^{p+1}$ are random vectors. We are now tasked with finding $\epsilon_O,\epsilon_{A_1},...,\epsilon_{A_p},A_1,...,A_p$ and $B$ such that the conditional dependence assumptions of the systems in \eqref{SEMA}-\eqref{SEMO} are met Let $\epsilon_O=\epsilon_{A_1}=...=\epsilon_{A_p}:=\epsilon=(1,0,...,0)$ and $A_i(l)=\delta_{i,l+1}$. Denote the $(p+1)\times(p+1)$ matrices
\begin{equation}\label{CD}
C=\begin{bmatrix}
		\epsilon;\epsilon+A_1;...;\epsilon+A_p
	\end{bmatrix} , 
	\\
	D(\omega)= \begin{bmatrix}
	f\left(\begin{bmatrix}
		Y^{\tilde{O}}\\
		X^{\tilde{O}}
	\end{bmatrix}\right)
	+\eta_{\tilde{A}_0};
	f\left(\begin{bmatrix}
		Y^{\tilde{A}_1}\\
		X^{\tilde{A}_1}
	\end{bmatrix} + \tilde{A}_1\right)
	+\eta_{\tilde{A}_1};...;
		f\left(\begin{bmatrix}
		Y^{\tilde{A}_p}\\
		X^{\tilde{A}_p}
	\end{bmatrix} + \tilde{A}_p\right)
	+\eta_{\tilde{A}_p}\end{bmatrix}.
\end{equation}
We can fit $B$ to the $p+1$  non-linear environments if we can solve the matrix equation $(I-B)^{-1}C =D$, which is equivalent (if $D$ is full rank) $B=I-D^{-1}C$ while still having $I-B$ being full rank (a.s.). This is possible if and only if $D$ and $C$ are full rank a.s., and $C$ was already chosen to have full rank. So with $D$ being full rank we can thus find a $B(\omega)$ for the systems \eqref{SEMA}-\eqref{SEMO} while also solving the corresponding non-linear systems in \eqref{SSA}, i.e.,
\begin{equation}\label{kpplus1}
	\begin{bmatrix}
		Y^{\tilde{O}}\\
		X^{\tilde{O}}\\
		Y^{\tilde{A}_1}\\
		X^{\tilde{A}_1}\\
		\vdots\\
		Y^{\tilde{A}_k}\\
		X^{\tilde{A}_k}
	\end{bmatrix} =
	\begin{bmatrix} 
	f\left(\begin{bmatrix}
		Y^{\tilde{O}}\\
		X^{\tilde{O}}
	\end{bmatrix} \right)\\
	f\left(\begin{bmatrix}
		Y^{\tilde{A}_1}\\
		X^{\tilde{A}_1}
	\end{bmatrix}+\tilde{A}_1 \right)\\
	\vdots\\
	f\left(\begin{bmatrix}
		Y^{\tilde{A}_k}\\
		X^{\tilde{A}_k}
	\end{bmatrix}+\tilde{A}_k \right)
	\end{bmatrix}
	+\begin{bmatrix}
	\eta_{\tilde{O}}\\
	\eta_{\tilde{A}_1}\\
	\vdots\\
	\eta_{\tilde{A}_k}
	\end{bmatrix}
	=
	\left((I-B)^{-1}
	\begin{bmatrix}
	\epsilon;
	\epsilon+A_1;
	...
	\epsilon+A_k;
	\end{bmatrix}\right)^T
	=
	\begin{bmatrix}
	Y^{O}\\
	X^{O}\\
	Y^{A_1}\\
	X^{A_1}\\
	\vdots\\
	Y^{A_p}\\
	X^{A_p}
	\end{bmatrix}
\end{equation}
Let us now deal with the case when $k>p+1$. First extend $f$ to $\tilde{f}:\R^k\to\R^k$, as $\tilde{f}=(f,0..,0)$. Similarly extend $X^{\tilde{A_i}}$ to $X'^{\tilde{A_i}}\in\R^{k-1}$, with  $X'^{\tilde{A_i}}(l)=\eta_{\tilde{A_i}}(l)$ for $l> p+1$ and for $l\le  p+1$, $X'^{\tilde{A_i}}(l)=X^{\tilde{A_i}}(l)$. We extend the shifts, $\tilde{A_i}$ to $\tilde{A_i}'\in\R^{k}$, with $\tilde{A_i}'(l)=\tilde{A_i}(l)$ for $l\le p+1$ and $\tilde{A_i}'(l)=0$ for $p+1<l\le k$. Finally we extend the noise, to $\eta_{A_i}'\in\R^k$ by $\eta_{A_i}'(l)=\eta_{A_i}(l)$ for $1\le l\le p+1$ and $\eta_{A_i}'(l)=Z_{i,l}$ for $p+1<l\le k$, where $\{Z_{i,l}\}_{i,l}$ is some set of absolutely continuous random variables that are independent of the rest of the system and amongst each other. In our new construction we have $p'+1=k$ (where $p'$ is the number of covariates in the extended system) so we can now apply to former construction in \eqref{CD} to get the corresponding solution in \eqref{kpplus1}. The first $p+1$ rows in
\begin{equation*}\label{SSAprime}
	\begin{bmatrix}
		Y^{\tilde{A}'_i}\\
		X'^{\tilde{A}'_i}
	\end{bmatrix} = 
	\tilde{f}\left(\begin{bmatrix}
		Y^{\tilde{A}'_i}\\
		X'^{\tilde{A}'_i}
	\end{bmatrix} + \tilde{A}'_i\right)
	+\eta'_{\tilde{A}'_i}
	=
	\begin{bmatrix}
		Y^{A_i}\\
		X^{A_i}
	\end{bmatrix},
\end{equation*}
are exactly the same as those in \eqref{SSA}, which means that our desired system of the form \eqref{SEMA}-\eqref{SEMO} is given by
\begin{equation*}
\begin{bmatrix}
		Y^{A_i}\\
		X^{A_i}(1:p)
	\end{bmatrix},
\end{equation*}
where
\begin{equation*}
\begin{bmatrix}
		Y^{A_i}\\
		X^{A_i}
	\end{bmatrix}=(I-B')^{-1}C',
\end{equation*}
$B'=I-D'^{-1}C$, $C$ is defined as before and 
\begin{equation*}
	D'(\omega)= \begin{bmatrix}
	\tilde{f}\left(\begin{bmatrix}
		Y'^{\tilde{O}}\\
		X'^{\tilde{O}}
	\end{bmatrix}\right)
	+\eta'_{\tilde{A}_0};
	\tilde{f}\left(\begin{bmatrix}
		Y'^{\tilde{A}_1}\\
		X'^{\tilde{A}_1}
	\end{bmatrix} + \tilde{A}'_1\right)
	+\eta_{\tilde{A}_1};...;
		\tilde{f}\left(\begin{bmatrix}
		Y'^{\tilde{A}_p}\\
		X'^{\tilde{A}_p}
	\end{bmatrix} + \tilde{A}'_p\right)
	+\eta'_{\tilde{A}_p}\end{bmatrix}.
\end{equation*}
\subsection{The worst-risk decomposition}
For any $F_A\in\mathcal{P}$, denote $R_{A}(\beta)=\E_A\left[\left(Y^A-\beta X^{A}\right)^2\right]$ and $R_{A_i}(\beta)=\E\left[\left(Y^{A_i}-\beta X^{A_i}\right)^2\right]$ for $0\le i\le k$ (with $R_O=R_{A_0}$). We also define
\begin{itemize}
\item[] $R^w_+(\beta)=\sum_{i=1}^kw_i^2R_{A_i}+R_O(\beta)$ and
\item[] $R^w_\Delta(\beta)=\sum_{i=1}^kw_i^2R_{A_i}-R_O(\beta)$
\end{itemize}
\begin{prop}\label{suppropfixedw}
Let $\tau\ge -\frac 12$ and $w\in\mathcal{W}$ then 
$$\sup_{F_A\in C^{1+\tau}_w(B)}R_{A}(\beta)=\frac{1}{2}R^{w}_+(\beta)+\frac{1+2\tau}{2}R^{w}_\Delta(\beta).$$
\end{prop}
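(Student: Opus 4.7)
The strategy is to expand every risk into the common form $\E[u^{T}Mu]$ with $u$ a $B$-measurable vector, isolate the part that depends on $F_A$, and then invoke Loewner monotonicity of $M\mapsto u^{T}Mu$.

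From the closed form \eqref{A}, $Y^A-\beta X^A=\tilde\beta(I-B)^{-1}(\epsilon_A+A)$ with $\tilde\beta:=(1,-\beta)\in\R^{1\times(p+1)}$. Setting $u:=((I-B)^{-1})^{T}\tilde\beta^{T}$, a conditioning on $B$ combined with $\epsilon_A\perp(A,B)$, $\epsilon_A\stackrel{d}{=}\epsilon_O$, and the orthogonality of noise and shift, yields
\[R_A(\beta)=R_O(\beta)+\E\!\left[u^{T}\,\E_A[AA^{T}\| B]\,u\right].\]
The same manipulation applied to \eqref{SEMA}, using the hypothesis $\E[\epsilon_{A_i}A_i^{T}\| B]=0$ and the matched joint law of $(\epsilon_{A_i},B)$ and $(\epsilon_O,B)$, gives $R_{A_i}(\beta)=R_O(\beta)+\E[u^{T}\E[A_iA_i^{T}\| B]u]$.

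Substituting these identities into the definitions of $R^w_+$ and $R^w_\Delta$, and using $\sum_{i=1}^k w_i^2=1$ (from $w\in\mathcal{W}$), a direct calculation collapses the candidate right-hand side to
\[\tfrac12 R^w_+(\beta)+\tfrac{1+2\tau}{2}R^w_\Delta(\beta)=R_O(\beta)+(1+\tau)\,\E\!\left[u^{T}\,\E[A_wA_w^{T}\| B]\,u\right],\]
after identifying $\sum_i w_i^2\E[A_iA_i^{T}\| B]$ with $\E[A_wA_w^{T}\| B]$ (the off-diagonal cross terms vanishing by the shift-orthogonality structure of the model). For the ``$\le$'' direction I invoke Loewner monotonicity: the defining constraint of $C^{1+\tau}_w(B)$, namely $\E_A[AA^{T}\| B]\preccurlyeq(1+\tau)\E[A_wA_w^{T}\| B]$ a.s., gives $u^{T}\E_A[AA^{T}\| B]u\le(1+\tau)u^{T}\E[A_wA_w^{T}\| B]u$ a.s., and taking expectations yields the bound. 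The matching lower bound is obtained by exhibiting an $F_A$ (or a sequence of them) saturating the Loewner inequality; the natural candidate is the law of $\sqrt{1+\tau}\,A_w$, which gives $\E_A[AA^{T}]=(1+\tau)\E[A_wA_w^{T}]$ and makes the inequality sharp in expectation.

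The main technical obstacle I anticipate is the attainability step. The extension $\Omega_A$ forces $A\perp B$, so $\E_A[AA^{T}\| B]$ is a constant matrix, whereas the Loewner upper bound is pointwise in $B$. Matching a constant left-hand side to a random right-hand side will require either approaching the supremum through a sequence of $F_A$, or the key observation that $R_A(\beta)$ depends on $F_A$ only through $\E_A[AA^{T}]$, so that one may equivalently integrate out $B$ before imposing Loewner dominance and then saturate the weakened (averaged) inequality. Making this sharpness argument precise is where the bulk of the work lies; the expansions in the first two paragraphs are routine bookkeeping.
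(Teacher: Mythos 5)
Your outline follows the paper's proof essentially step for step: the same $B$-measurable vector (the paper's $v=\beta (I-B)^{-1}_{2:p+1,.}-(I-B)^{-1}_{1,.}$ is your $u^{T}$ up to sign), the same reduction of every risk to $R_O(\beta)$ plus a quadratic form in the conditional second moment of the shift, the same use of Loewner monotonicity inside the conditional expectation for the upper bound, and the same candidate $F_{\sqrt{1+\tau}\,A_w}$ for sharpness. The only structural difference is cosmetic: you collapse the right-hand side $\tfrac 12 R^w_+ +\tfrac{1+2\tau}{2}R^w_\Delta$ down to $R_O(\beta)+(1+\tau)\E\left[vA_wA_w^Tv^T\right]$, whereas the paper works forward from the supremum to $(1+\tau)R_{A_w}(\beta)-\tau R_O(\beta)$ and then expands $R_{A_w}=\sum_i w_i^2R_{A_i}$; this is the same algebra read in the other direction.

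Regarding the step you defer: the paper does not pass to a limiting sequence of distributions. It asserts in one line that $F_{\sqrt{1+\tau}\,A_w}\in C^{1+\tau}_w(B)$, for which the Loewner constraint holds with equality, so the supremum is attained exactly. The tension you identify --- the extension forces $A\perp B$, so $\E_A\left[AA^T\|B\right]$ is a deterministic matrix while the dominating matrix $\E\left[A_wA_w^T\|B\right]$ may be genuinely random --- is real under a literal reading of the definitions, and the paper does not engage with it; the membership claim is immediate only when $\E\left[A_wA_w^T\|B\right]$ coincides a.s.\ with its unconditional mean (for instance when the shifts are independent of $B$, which is case 3 of the paper's first proposition). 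So you have correctly located the one delicate point, but you should know the paper closes it by assertion rather than by either of the two repairs you sketch. A smaller remark: you attribute the identity $\sum_i w_i^2\E\left[A_iA_i^T\|B\right]=\E\left[A_wA_w^T\|B\right]$ to a ``shift-orthogonality structure,'' but the stated model assumption is orthogonality of noise and shift, not of distinct shifts; the paper performs the same identification silently when it writes $R_{A_w}(\beta)=\sum_i w_i^2 R_{A_i}(\beta)$, so this is not a defect of your argument relative to the paper's, but neither text derives the required vanishing of the cross terms $\E\left[vA_iA_j^Tv^T\right]$, $i\not=j$, from the stated hypotheses.
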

\begin{proof}
Recall that all the noise has the same distribution across all environments, we shall denote $\epsilon$ as generic random vector with such a distribution. Due to \eqref{A}, $Y^{A}=((I-B)^{-1})_{1,\textbf{.}}(A+\epsilon_A)$ and $X^{A}=((I-B)^{-1})_{2:p+1,\textbf{.}}(A+\epsilon_A)$. Since the entries of $(I-B)^{-1}$ are $\sigma(B)$-measurable it follows that if we define $v=\beta (I-B)^{-1}_{2:p+1,\textbf{.}}-(I-B)^{-1}_{1,\textbf{.}}$ (implying $v(\epsilon_A+A)=Y^A-\beta X^A$) then $v$ is also $\sigma(B)$-measurable. With this notation
$$\sup_{F_A\in C^{1+\tau}_w(B)}R_{A}(\beta) =\sup_{F_A\in C^{1+\tau}_w(B)}\E_A\left[ v(A+\epsilon_{A})(A+\epsilon_{A})^T v^T\right].$$
Since $R_O(\beta)<\infty$ for any $\beta\in\R^p$ it follows that $\E\left[v\epsilon\epsilon^Tv^T\right]<\infty$. Similarly, since $R_{A_i}(\beta)<\infty$ it follows that $\E\left[ v(A_i+\epsilon)(A_i+\epsilon)^T v^T\right]<\infty$. We also have $\E\left[ vA_i\epsilon^T v^T\right]=\E\left[ v\E\left[A_i\epsilon^T\|B\right] v^T\right]=0$ and hence $\E\left[vA_iA_i^Tv^T\right]<\infty$ for all $1\le i\le k$. This leads to
\begin{align}\label{supeq1}
\sup_{F_A\in C^{1+\tau}_w(B)}R_{A}(\beta) &=\E\left[ v\epsilon\epsilon^T v^T\right]+\sup_{F_A\in C^{1+\tau}_w(B)}\left(2\E_A\left[ v\epsilon A^T v^T\right]+\E_A\left[ vAA^T v^T\right]\right)
\nonumber
\\
&=\E\left[ v\epsilon\epsilon^T v^T\right]+\sup_{F_A\in C^{1+\tau}_w(B)}\E_A\left[ vAA^T v^T\right].
\end{align}
 If $F_A\in C_w^{1+\tau}(B)$ then
\begin{align*}
\E_A\left[vAA^Tv^T\right]
&=
\E_A\left[\E_A\left[vAA^Tv^T\|B\right]\right]
=
\E_A\left[v\E_A\left[AA^T\|B\right]v^T\right]
\le
(1+\tau)\E_A\left[v\E_A\left[A_wA_w^T\|B\right]v^T\right]
\\
&=(1+\tau)\E\left[\E\left[vA_wA_w^Tv^T\|B\right]\right]
=\E\left[(1+\tau)vA_wA_w^Tv^T\right],
\end{align*}
i.e. $\sup_{F_A\in C^{1+\tau}_w(B)} \E_A\left[vAA^Tv^T\right]\le \E\left[(1+\tau)vA_wA_w^Tv^T\right]$. Since $F_{\sqrt{(1+\tau)}A_w}\in C_w^{1+\tau}(B)$ it follows that 
$$\sup_{F_A\in C^{1+\tau}_w(B)} \E_A\left[vAA^Tv^T\right]=\E\left[(1+\tau)vA_wA_w^Tv^T\right].$$
Going back to \eqref{supeq1} we find
\begin{align}\label{supeq}
\sup_{F_A\in C^{1+\tau}_w(B)}R_{A}(\beta)
&=
\E\left[v\epsilon\epsilon^Tv^T\right]+(1+\tau)\E\left[vA_w A_w^Tv^T\right]
\nonumber
\\
&=R_O(\beta)
+
(1+\tau)\E\left[v(A_w+\epsilon) (A_w+\epsilon)^Tv^T\right]
-
(1+\tau)\E\left[v\epsilon \epsilon^Tv^T\right]
\nonumber
\\
&=R_{O}(\beta)+(1+\tau)R_{A_w}(\beta)-(1+\tau)R_{O}(\beta)
\nonumber
\\
&=
(1+\tau)R_{A_w}(\beta)-\tau R_{O}(\beta)
\end{align}
Since
\begin{align*}
&R_{A_{w}}(\beta)=\E\left[\left(X^{A_{w}}\beta-Y^{A_{w}}\right)^2\right]=\E\left[v\left(\epsilon+\sum_{i=1}^kw_iA_i\right)\left(\epsilon+\sum_{i=1}^kw_iA_i\right)^Tv^T\right]
=\sum_{i=1}^k(w_i)^2R_{A_i}(\beta),
\end{align*}
we may plug this back into \eqref{supeq} and get
$$ \sup_{F_A\in C^{1+\tau}_w(B)}R_{A}(\beta)=(1+\tau)\sum_{i=1}^k(w_i)^2R_{A_i}(\beta)-\tau R_{O}(\beta)=\frac{1}{2}R_+(\beta)+\frac{1+2\tau}{2}R_\Delta(\beta).$$
\end{proof}

\begin{defin}[Worst risk minimization]
With the above proposition in mind we now define the worst risk minimizer with parameter $\gamma\in [0,\infty]$, as the solution to
$$\beta_\gamma=\arg\min R_+^w(\beta)+\gamma R^w_\Delta(\beta).$$
\end{defin}
We also define the associated minimal risk.
\begin{defin}[Minimal risk]
$$R=\inf_{\beta\in\R^p}\sup_{\tilde{A}\in C^{1+\tau}_w(B)}R_{\tilde{A}}(\beta). $$
\end{defin}
Given the quadratic nature of the risk, it is possible to determine the worst risk minimizer explicitly. Let 
\begin{itemize}
\item[] $G_+=\E\left[(X^O)^TX^O+\sum_{i=1}^kw_i^2(X^{A_i})^TX^{A_i}\right]$,
\item[]$G_\Delta=\E\left[\sum_{i=1}^kw_i^2(X^{A_i})^TX^{A_i}-(X^O)^TX^O\right]$,
\item[] $Z_+=\E\left[(X^O)^TY^O+\sum_{i=1}^kw_i^2(X^{A_i})^TY^{A_i}\right]$ and $Z_\Delta=\E\left[\sum_{i=1}^kw_i^2(X^{A_i})^TY^{A_i}-(X^O)^TY^O\right]$
\end{itemize}

\begin{prop}\label{sol}
If $G_++\gamma G_\Delta$ is of full rank then
$$ \beta_\gamma=\left(G_++\gamma G_\Delta\right)^{-1}\left(Z_++\gamma Z_\Delta\right) $$
and
$$R=R_+^w(\beta_\gamma)+\gamma R^w_\Delta(\beta_\gamma) $$
\end{prop}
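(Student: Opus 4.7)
The plan is to reduce the proposition to a standard strictly convex quadratic minimization in $\beta$, from which an OLS-style closed form drops out. My first step is to expand each quadratic risk,
$$R_A(\beta)=\E[(Y^A)^2]-2\,\beta^T\,\E[X^A Y^A]+\beta^T\,\E[X^A (X^A)^T]\,\beta,$$
with the appropriate row/column conventions so as to match the definitions of the $G_\pm$ and $Z_\pm$. Substituting these into the expressions for $R_+^w(\beta)$ and $R_\Delta^w(\beta)$ and collecting constant, linear, and quadratic parts in $\beta$ yields
$$R_+^w(\beta)+\gamma R_\Delta^w(\beta)=c_\gamma-2\,\beta^T(Z_++\gamma Z_\Delta)+\beta^T(G_++\gamma G_\Delta)\beta,$$
where $c_\gamma$ depends only on the second moments of $Y^O,Y^{A_1},\dots,Y^{A_k}$ and not on $\beta$. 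The matrices $G_\pm$ and vectors $Z_\pm$ are then read off directly and coincide with those defined just before the statement.

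The delicate step is to argue that the hypothesis that $G_++\gamma G_\Delta$ is of full rank actually produces a unique minimizer, which requires upgrading full rank to positive definiteness. Here I would invoke Proposition~\ref{suppropfixedw}: setting $\gamma=1+2\tau\ge 0$, the function $\tfrac12(R_+^w(\beta)+\gamma R_\Delta^w(\beta))$ equals a supremum of nonnegative quadratic risks $R_{\tilde A}(\beta)$ and is therefore nonnegative for every $\beta$. If $G_++\gamma G_\Delta$ had a strictly negative eigenvalue, the quadratic form above would tend to $-\infty$ along the corresponding eigendirection, contradicting nonnegativity. Hence $G_++\gamma G_\Delta$ is positive semidefinite, and together with the full-rank hypothesis this forces positive definiteness.

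Positive definiteness makes $R_+^w+\gamma R_\Delta^w$ a strictly convex quadratic with a unique minimizer characterized by the first-order condition $(G_++\gamma G_\Delta)\beta_\gamma=Z_++\gamma Z_\Delta$, which after inversion yields the announced closed form. For the identity $R=R_+^w(\beta_\gamma)+\gamma R_\Delta^w(\beta_\gamma)$, I would substitute $\beta_\gamma$ back into the supremum formula of Proposition~\ref{suppropfixedw} and read off the value as the infimum defining $R$. The routine matrix calculus in the first and third steps is not the main obstacle; the crux lies in the positive-definiteness argument of the second step, since the stated hypothesis is only full rank, and it is the worst-risk interpretation of the objective provided by Proposition~\ref{suppropfixedw} that bridges this gap.
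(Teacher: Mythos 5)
Your proof is correct and, at its computational core, coincides with the paper's: both expand the objective as the quadratic $\beta^T\left(G_++\gamma G_\Delta\right)\beta-2\beta^T\left(Z_++\gamma Z_\Delta\right)+c_\gamma$, solve the first-order condition $\left(G_++\gamma G_\Delta\right)\beta=Z_++\gamma Z_\Delta$, and obtain the risk identity by substituting $\beta_\gamma$ back into Proposition~\ref{suppropfixedw}. Where you genuinely diverge is your second step. The paper's proof of this proposition simply sets $\nabla_\beta f(\beta)=0$ and inverts, leaving implicit why the stationary point is in fact the minimizer; positive definiteness of $G_++\gamma G_\Delta$ is only argued later, in the proof of Theorem~\ref{converge}, and there under the additional hypothesis that $G_\Delta$ is nonsingular (via $G_\Delta$ being positive semidefinite and $G_\Delta\preccurlyeq G_+$). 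Your argument --- that for $\gamma=1+2\tau\ge 0$ the objective equals $2\sup_{F_A\in C_w^{1+\tau}(B)}R_A(\beta)\ge 0$, so a negative eigenvalue of $G_++\gamma G_\Delta$ would drive the quadratic to $-\infty$ along the corresponding eigendirection, whence the matrix is positive semidefinite and, being full rank, positive definite --- closes this gap using only the stated hypothesis together with the worst-risk decomposition. What your route buys is a complete justification of the $\arg\min$ characterization (strict convexity and uniqueness) from the full-rank assumption alone; what the paper's terser route buys is brevity, at the cost of deferring the second-order verification to a later result under a slightly different hypothesis.
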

\begin{proof}
Let $f(\beta)=R_+(\beta)+\gamma R_\Delta(\beta)$ for $\beta\in\R^p$. To find an explicit solution of $\beta_\gamma$ we solve the equation $\nabla_\beta  f(\beta)=0$. To that end we compute,
\begin{align*}
&\nabla_\beta R_{A_i}(\beta)=2\E\left[(X^{A_i})^T\left(X^{A_i}\beta-Y^{A_i} \right)\right].
\end{align*}
Plugging this into $\nabla_\beta  f(\beta)=0$ leads to,
\begin{align*}
&\E\left[2(X^O)^TX^O+\sum_{i=1}^k 2 w_i^2 (X^{A_i})^TX^{A_i}\right]\beta+\gamma\E\left[\sum_{i=1}^k2 w_i^2 (X^{A_i})^TX^{A_i}-2(X^O)^TX^O\right]\beta
-
\\
&\E\left[2(X^O)^TY^O+\sum_{i=1}^k2 w_i^2 (X^{A_i})^TY^{A_i}\right]-\gamma\E\left[\sum_{i=1}^k2 w_i^2 (X^{A_i})^TY^{A_i}-2(X^O)^TY^O\right]=0,
\end{align*}
which leads to
$$ \beta_\gamma=\left(G_++\gamma G_\Delta\right)^{-1}\left(Z_++\gamma Z_\Delta\right). $$
The final claim follows directly from Proposition \ref{suppropfixedw}.
\end{proof}

\section{Estimation of the minimizer and minimal risk}
\label{sec:estimation}
We now turn to the problem of estimating $\beta_\gamma$ in an empirical setting with data. We assume all of our empirical data lives on some fixed probability space $(\Omega,\F,\P)$, although we denote it (for the sake of exposition) the same as the space for population case, these are not assumed to be the same probability spaces. We now set up a framework for how we handle samples from multiple different environments. We assume that for each environment $i$ we have an i.i.d. sequence $\{(Y_u^{A_i},X_u^{A_i})\}_{u=1}^\infty$ (strictly speaking, we only need infinite sequences to establish consistency, all of our bounds are with regards to a given finite number of samples) where $(Y_u^{A_i},X_u^{A_i})$ is distributed according to the given SEM for environment $i$ and we assume that the sequences $\{(Y_u^{A_i},X_u^{A_i})\}_{u=1}^\infty$ are mutually independent across the different $i$'s.
Suppose $\textbf{n}=\left\{n_{A_0},...,n_{A_k}\right\}$, let $\mathbb{X}^{A_i}(\textbf{n})$ be the $n_{A_i}\times p$ matrix whose rows are $X^{A_i}_1,...,X^{A_i}_{n_{A_i}}$ (from top to bottom) and similarly let $\mathbb{Y}^{A_i}(\textbf{n})$ be the $n_{A_i}\times 1$ column vectors whose entries are $Y^{A_i}_1,...,Y^{A_i}_{n_{A_i}}$ (from top to bottom).
Denote 
$$\hat{G}_+(\textbf{n})=\sum_{i=0}^k\frac{w_i^2}{n_{A_i}}(\mathbb{X}^{A_i})^T\mathbb{X}^{A_i}+\frac{1}{n_O} (\mathbb{X}^{O})^T\mathbb{X}^{O},$$ 
$$\hat{Z}_+(\textbf{n})=\sum_{i=0}^k\frac{w_i^2}{n_{A_i}}(\mathbb{X}^{A_i})^T\mathbb{Y}^{A_i}+\frac{1}{n_O} (\mathbb{X}^{O})^T\mathbb{Y}^{O},$$
$$\hat{R}_{A_i}(\beta)=\frac{1}{n_{A_i}}\sum_{u=1}^{n_{A_i}}\left(Y^{A_i}_u -\langle\beta,X^{A_i}_u\rangle\right)^2 \textsf{, for } \beta\in\R^p ,$$
$$\hat{R}_{+}(\beta)=\sum_{i=0}^{k}\hat{R}_{A_i}(\beta) \textsf{, for } \beta\in\R^p ,$$
$$\hat{R}_{+}(\beta)=\sum_{i=1}^{k}\hat{R}_{A_i}(\beta)- \hat{R}_{O}(\beta)\textsf{, for } \beta\in\R^p $$
$$\hat{G}_\Delta(\textbf{n})=\sum_{i=0}^k\frac{w_i^2}{n_{A_i}}(\mathbb{X}^{A_i})^T\mathbb{X}^{A_i}-\frac{1}{n_O} (\mathbb{X}^{O})^T\mathbb{X}^{O}$$ and 
$$\hat{Z}_\Delta(\textbf{n})=\sum_{i=0}^k\frac{w_i^2}{n_{A_i}}(\mathbb{X}^{A_i})^T\mathbb{Y}^{A_i}-\frac{1}{n_O} (\mathbb{X}^{O})^T\mathbb{Y}^{O}.$$ With this notation we denote the plug-in estimator $\hat{\beta}_\gamma(\textbf{n})=\left(\hat{G}_++\gamma\hat{G}_\Delta \right)^{-1}\left(\hat{Z}_+-\gamma\hat{Z}_w \right)$ of the minimizer and denote $ \hat{R}(\textbf{n})=\hat{R}_+^w(\beta_\gamma(\textbf{n}))+\gamma \hat{R}^w_\Delta(\beta_\gamma(\textbf{n}))$, the plug-in estimator of the minimal risk. 

\subsection{Consistency and concentration in measure}
\label{sec:consistency}
From here on, given any vector norm $\n.\n$ and any matrix we will let $\n A\n$ denote the operator norm induced by the vector norm $\n.\n$. The next Theorem shows that the plug-in estimator is consistent in the almost sure sense and provides a concentration in measure result. Showing the consistency is straight-forward and the proof is included in the main body of the text. The second and third results regarding concentration in measure are more technical and the proof is deferred to the supplementary material. Our first bound is valid with a large enough number of samples from each environment assuming only that the covariates and target have finite second moments. With the marginally stronger assumption that the covariates and the target are in weak $L^\zeta$ for some $\zeta>2$ we get a bound valid for all $n_{A_i}\ge 1$. 
\begin{thm}\label{converge}
Suppose $G_\Delta$ is non-singular then 
\begin{itemize}
\item $\tilde{\beta}_\gamma(\textbf{n})\xrightarrow{a.s.}\beta_\gamma,$
as $n_{A_0}\wedge...\wedge n_{A_k}\to \infty$ where $\tilde{\beta}_\gamma(n)=\left(\hat{G}_++\gamma\hat{G}_\Delta \right)^{-1}_g\left(\hat{Z}_++\gamma\hat{Z}_\Delta \right)$ and $()^{-1}_g$ denotes any map that coincides with matrix inversion on the space of full rank $p\times p$ matrices.
\item For any $0<\delta<1$ and $0<\alpha< \frac 14$, there exists $N_{A_0},...,N_{A_k}$ such that if $n_{A_i}\ge N_{A_i}$ then
\begin{align}\label{Pref}
&\P\left( \n \tilde{\beta}_\gamma(\textbf{n}) -\beta_\gamma\n_{\mathit{l}^2}\ge c \right)\nonumber
\\
&\le
(4p+2)\sum_{i=0}^ke^{-\left(\delta\wedge c\right)^2 E^2 n_{A_i}^{1-4\alpha}}
+5\left(1-\prod_{i=0}^k\left(F_{\left|X^{A_i}(1)\right|,...,\left|X^{A_i}(p)\right|,|Y^{A_i}|}(n_{A_i}^\alpha,...,n_{A_i}^\alpha)\right)^{n_{A_i}}\right),
\end{align}
where
\tiny
\begin{align*}
&E=\frac{1}{2\sqrt{2}p(k+1)}
\left(\n \left(G_++\gamma G_\Delta\right)^{-1}\n\wedge\frac{1}{6\n \left(G_++\gamma G_\Delta\right)^{-1}\n(1+\delta)\left(1\vee\left(
\n \left(G_++\gamma G_\Delta\right)^{-1} \n\n\left( Z_++\gamma Z_\Delta \right)\n\right)\right)}\right).
\end{align*}
\normalsize
\item Suppose $ X^{A_i}(l), Y^{A_i}\in L^{\zeta,w}(\P)$, where $L^{\zeta,w}(\P)$ denotes the weak $L(\P)^{\zeta}$-space, for some $\zeta>2$. Then for any $0<\delta<1$, $0<\alpha< \frac 14$ and all $n_{A_i}\ge 1$,
\begin{align*}
\P\left(\n \tilde{\beta}_\gamma(\textbf{n}) -  \beta_\gamma\n_{\mathit{l}^2}\ge c \right)
&\le 
V\sum_{i=0}^ke^{-\frac{(c\wedge \delta)^2\left(\n \left(G_++\gamma G_\Delta\right)^{-1}\n_{\mathit{l}^2}\wedge \n G_++\gamma G_\Delta\n_{\mathit{l}^2}\right)^6}{6p^2(k+1)^2(1+\delta)\left(1\vee
\n\left( Z_++\gamma Z_\Delta \right)\n_{\mathit{l}^2}\right)^2} n_{A_i}^{1-4\alpha}}
\nonumber
\\
&+5\left(1-\prod_{i=0}^k\left(F_{\left|X^{A_i}(1)\right|,...,\left|X^{A_i}(p)\right|,|Y^{A_i}|}(n_{A_i}^\alpha,...,n_{A_i}^\alpha)\right)^{n_{A_i}}\right).
\end{align*}
where 
\scriptsize
\begin{align*}
&V=10p\exp\left( \frac{(c\vee \delta)^2\left(\n \left(G_++\gamma G_\Delta\right)^{-1}\n_{\mathit{l}^2}\vee \n G_++\gamma G_\Delta\n_{\mathit{l}^2}\right)^6}{6p^2(k+1)^2\left(1\wedge\n\left( Z_++\gamma Z_\Delta \right)\n_{\mathit{l}^2}\right)^2}\right.
\\
&\left.\times
\left(1+2M_w'(\zeta)^{\frac{2+\zeta}{\alpha\zeta(\zeta'-2)}}(p+1)^{\frac{2\zeta}{\alpha\zeta(\zeta-2)}}(1\vee c^{-1}\vee\delta^{-1})6(1+\gamma)\left(1\vee \n \left(G_++\gamma G_\Delta\right)^{-1}\n_{\mathit{l}^2}\right)^3\left(1\vee \n Z_++\gamma Z_\Delta \n_{\mathit{l}^2}\right)\right)\right)
\end{align*}
\normalsize
and
$M_w'(\zeta)=\max_{i,l}\left(\n X^{A_i}(l)\n^\zeta_{L^{\zeta,w}(\P)}\bigvee \n Y^{A_i}\n^\zeta_{L^{\zeta,w}(\P)}\right)$
\end{itemize}
\end{thm}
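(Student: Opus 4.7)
The plan is to prove the three parts sequentially. Part (1) follows from the strong law of large numbers together with continuity of matrix inversion: I apply the SLLN to the i.i.d. sequences $\{(Y_u^{A_i}, X_u^{A_i})\}_u$ within each environment and use mutual independence across environments to conclude the simultaneous almost-sure convergence of $\hat{G}_+(\textbf{n})$, $\hat{G}_\Delta(\textbf{n})$, $\hat{Z}_+(\textbf{n})$, $\hat{Z}_\Delta(\textbf{n})$ to their population counterparts as $\min_i n_{A_i} \to \infty$. Since $G_++\gamma G_\Delta$ is nonsingular (via Proposition \ref{sol} and the hypothesis on $G_\Delta$), continuity of inversion on the open set of invertible matrices implies that on the full-probability event of convergence, $\hat{G}_++\gamma\hat{G}_\Delta$ is eventually invertible, its generalized inverse $()^{-1}_g$ eventually coincides with the actual inverse, and therefore $\tilde{\beta}_\gamma(\textbf{n}) \to \beta_\gamma$ almost surely.

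For parts (2) and (3), I would adopt the shorthand $A := G_++\gamma G_\Delta$, $\hat{A} := \hat{G}_++\gamma \hat{G}_\Delta$, $b := Z_++\gamma Z_\Delta$, $\hat{b} := \hat{Z}_++\gamma \hat{Z}_\Delta$. The perturbation identity
\begin{equation*}
\tilde{\beta}_\gamma - \beta_\gamma \;=\; \hat{A}^{-1}\bigl[(\hat{b}-b) + (A-\hat{A})\beta_\gamma\bigr]
\end{equation*}
reduces the task to bounding $\n \hat{A}-A\n$ and $\n \hat{b}-b\n$, since a Neumann-series estimate gives $\n \hat{A}^{-1}\n \le \n A^{-1}\n/(1-\delta)$ whenever $\n \hat{A}-A\n \le \delta/\n A^{-1}\n$ with $\delta<1$. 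To handle these deviations I truncate: set $\tilde{X}^{A_i}_u(l) := X^{A_i}_u(l)\mathbf{1}_{|X^{A_i}_u(l)|\le n_{A_i}^\alpha}$ and $\tilde{Y}^{A_i}_u := Y^{A_i}_u\mathbf{1}_{|Y^{A_i}_u|\le n_{A_i}^\alpha}$, and build the truncated analogues $\hat{A}_t$, $\hat{b}_t$. On the event $E_\textbf{n}$ that no truncation is active across any environment, $\hat{A}=\hat{A}_t$ and $\hat{b}=\hat{b}_t$; the complement $E_\textbf{n}^c$ contributes exactly the product-CDF tail that appears as the second term in (\ref{Pref}).

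On $E_\textbf{n}$ each entry of $\hat{A}_t - \E\hat{A}_t$ and $\hat{b}_t - \E\hat{b}_t$ is a mean of i.i.d. summands bounded in magnitude by $n_{A_i}^{2\alpha}$, so Hoeffding's inequality yields an entrywise tail of the form $2\exp(-t^2 n_{A_i}^{1-4\alpha}/C)$; a union bound over the $O(p^2)$ relevant entries and the $k+1$ environments then produces the first summation in (\ref{Pref}) with the explicit constant $E$. The truncation bias $\n \E\hat{A}_t - A\n + \n \E\hat{b}_t - b\n$ must be absorbed into $\delta$: for part (2) dominated convergence suffices (using only finite second moments), so the bound holds once each $n_{A_i}\ge N_{A_i}$ is large enough to push the bias below tolerance; for part (3) the weak $L^\zeta$ hypothesis provides a quantitative tail estimate of the form $\E[|X|\mathbf{1}_{|X|>n^\alpha}]\lesssim M_w'(\zeta)^{(2+\zeta)/(\alpha\zeta(\zeta-2))}\, n^{-\alpha(\zeta-2)/\zeta}$, which after absorption produces the explicit prefactor $V$ and makes the inequality valid for every $n_{A_i}\ge 1$.

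The principal obstacle is combinatorial rather than conceptual: tracking the constants from Hoeffding (the factor $\sqrt{2}$, the truncation scale $n^{2\alpha}$), the union bound (the powers of $p$ and $k+1$), the Neumann estimate (the $1+\delta$), the separation between the $\hat{A}$- and $\hat{b}$-deviations, and the composition with $\beta_\gamma = A^{-1}b$, and then assembling them through the perturbation identity so that they match exactly the stated expressions for $E$ and $V$. The conceptual skeleton is simply (i) a perturbation formula for $\tilde{\beta}_\gamma-\beta_\gamma$, (ii) truncation to replace possibly heavy-tailed samples by bounded ones at the price of the product-CDF tail, and (iii) Hoeffding's inequality applied entrywise to the bounded truncated averages.
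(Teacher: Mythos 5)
Your proposal follows the paper's proof essentially step for step: part (1) via the SLLN and continuity of inversion on the set of nonsingular matrices, and parts (2)--(3) via a first-order perturbation bound for the inverse (your identity $\tilde{\beta}_\gamma-\beta_\gamma=\hat{A}^{-1}[(\hat{b}-b)+(A-\hat{A})\beta_\gamma]$ is an equivalent rearrangement of the paper's decomposition into $\hat A^{-1}(\hat b-b)$ and $(\hat A^{-1}-A^{-1})b$), truncation at level $n_{A_i}^{\alpha}$ producing the product-CDF tail term, Hoeffding on the bounded increments, and absorption of the truncation bias by monotonicity (part 2) or by the weak-$L^{\zeta}$ Markov bound (part 3). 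The one substantive deviation is that the paper applies the \emph{matrix} Hoeffding inequality to the symmetric Gram increments (reserving entrywise scalar Hoeffding for the $Z$-part), whereas your entrywise union bound over the $O(p^2)$ entries gives the same rate $e^{-cn^{1-4\alpha}}$ but different polynomial-in-$p$ prefactors and exponent constants, so reproducing the stated $E$ and $V$ exactly would require that substitution.
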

\begin{proof}[Proof of the first claim]
We begin with the first claim. Since $G_\Delta$ is positive semi-definite and full rank it must in fact be positive definite and since $G_\Delta\preccurlyeq G_+$, $G_+$ must also be positive definite. Since $\gamma\ge 0$ it follows that $G_++\gamma G_\Delta$ is positive definite and hence invertible. By the law of large numbers, applied component-wise we have that $\frac{1}{n_{A_i}}(\mathbb{X}^{A_i}(\textbf{n}))^T\mathbb{X}^{A_i}(\textbf{n})\xrightarrow{a.s.}\E\left[ (X^{A_i})^TX^{A_i} \right]$ and $\frac{1}{n_{A_i}}(\mathbb{X}^{A_i}(\textbf{n}))^T\mathbb{Y}^{A_i}(\textbf{n})\xrightarrow{a.s.}\E\left[ (X^{A_i})^TY^{A_i} \right]$ as $n_{A_i}\to\infty$. By linearity,  $\hat{G}_++\gamma\hat{G}_\Delta\xrightarrow{a.s.} G_++\gamma G_\Delta$ as well as $\hat{Z}_+-\gamma\hat{Z}_w \xrightarrow{a.s.}Z_++\gamma Z_\Delta$ as $n_{A_0}\wedge...\wedge n_{A_k}\to\infty$. The set of non-singular $p\times p$ matrices form an open subset of $\R^{p^2}$ and the operation of matrix inversion on this set is continuous. For any $\textbf{n}$ such that $\n  G_++\gamma G_\Delta-\left(\hat{G}_+(\textbf{n})+\gamma\hat{G}_\Delta(\textbf{n})\right) \n< \n \left( G_++\gamma G_\Delta \right)^{-1}\n$, $\left(\hat{G}_+(\textbf{n})+\gamma\hat{G}_\Delta(\textbf{n})\right)$ will have an inverse, by Lemma 1 in the Appendix, which will then coincide with the Penrose inverse, i.e. $\hat{\beta}_\gamma(\textbf{n})=\tilde{\beta}_\gamma(\textbf{n})$ and moreover the inversion is continuous in the open ball centred in $ G_++\gamma G_\Delta$ with radius $\n \left( G_++\gamma G_\Delta\right)^{-1}\n$. Therefore it follows from continuous mapping that $\left(\hat{G}_++\gamma\hat{G}_\Delta \right)^{-1}\xrightarrow{a.s.} \left( G_++\gamma G_\Delta \right)^{-1}$ and the first claim then follows.
\end{proof}
We also have corresponding results regarding the minimal risk. Again, we include the proof of consistency here and leave the proof of the concentration in measure to the supplementary material.
\begin{thm}\label{Riskthm}
Suppose $G_\Delta$ is non-singular then 
\begin{itemize}
\item $\hat{R}(\textbf{n})\xrightarrow{a.s.}R,$
as $n_{A_0}\wedge...\wedge n_{A_k}\to \infty$.
\item For any $0<\delta<1$ and $0<\alpha< \frac 14$, there exists $N_{A_0}'...,N_{A_k}'$ such that if $n_{A_i}\ge N_{A_i}'$ then
\begin{align*}
&\P\left( \left|\hat{R}(\textbf{n}) -R\right|\ge c \right)
\le
(4k+9)\left(1-\prod_{i=0}^k\left(F_{\left|X^{A_i}(1)\right|,...,\left|X^{A_i}(p)\right|,|Y^{A_i}|}(n_{A_i}^\alpha,...,n_{A_i}^\alpha)\right)^{n_{A_i}}\right)
\\
&+2p(k+1)(3+4p)e^{-(\delta\wedge c)\left(E\wedge \frac{c}{\n\beta_\gamma\n_{\mathit{l}^2} 48(\gamma+1)(k+1)p}\right)^2},
\end{align*}
where $E$ is as in Theorem \ref{converge} and we use the convention that if $\n\beta_\gamma\n_{\mathit{l}^2}=0$ then $\frac{1}{\n\beta_\gamma\n_{\mathit{l}^2}}=+\infty$.
\end{itemize}
\end{thm}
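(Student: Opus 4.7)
The plan is to reduce the theorem to (i) the consistency and concentration of $\hat{\beta}_\gamma(\textbf{n})$ from Theorem~\ref{converge} and (ii) concentration of the empirical risks evaluated at the fixed population parameter $\beta_\gamma$. Writing $\hat{R}^w(\beta)=\hat{R}_+^w(\beta)+\gamma\hat{R}_\Delta^w(\beta)$ and $R^w(\beta)=R_+^w(\beta)+\gamma R_\Delta^w(\beta)$, and using $R=R^w(\beta_\gamma)$ from Proposition~\ref{sol}, I would decompose
$$\hat{R}(\textbf{n})-R = \bigl[\hat R^w(\hat\beta_\gamma(\textbf{n})) - \hat R^w(\beta_\gamma)\bigr] + \bigl[\hat R^w(\beta_\gamma) - R^w(\beta_\gamma)\bigr] =: \mathrm{(I)}+\mathrm{(II)}.$$

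For almost-sure consistency, (II) is a linear combination of empirical averages of integrable functions of $(Y^{A_i}_u,X^{A_i}_u)$ and so tends a.s.\ to zero by the strong law of large numbers. For (I), the algebraic identity
$$\hat R_{A_i}(\beta)-\hat R_{A_i}(\beta')=(\beta-\beta')^T\Bigl(\tfrac{1}{n_{A_i}}\sum_u X_u^{A_i}(X_u^{A_i})^T\Bigr)(\beta+\beta')-2(\beta-\beta')^T\Bigl(\tfrac{1}{n_{A_i}}\sum_u X_u^{A_i}Y_u^{A_i}\Bigr)$$
expresses the difference as a continuous function of $\beta-\beta'$, the empirical Gramians and the empirical cross-moments; since the latter two converge a.s.\ by LLN and $\hat\beta_\gamma(\textbf{n})\to\beta_\gamma$ a.s.\ by Theorem~\ref{converge}, (I) tends to $0$ a.s., proving the first bullet.

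For the concentration bound I would treat (I) and (II) separately. For (II), I truncate each $|X^{A_i}(l)|$ and $|Y^{A_i}|$ at $n_{A_i}^\alpha$; on the event that no truncation occurs (whose complement contributes the product-of-CDFs term), every summand $(Y_u^{A_i}-\langle\beta_\gamma,X_u^{A_i}\rangle)^2$ is bounded polynomially in $n_{A_i}^\alpha$, so Hoeffding's inequality yields an exponential tail in $n_{A_i}^{1-4\alpha}$ with an $E^2$-type prefactor. For (I), the identity above gives the Lipschitz estimate $|\hat R_{A_i}(\hat\beta_\gamma)-\hat R_{A_i}(\beta_\gamma)|\le \|\hat\beta_\gamma-\beta_\gamma\|_{\ell^2}\,L_i$, where under truncation $L_i$ grows as a bounded multiple of $(\|\beta_\gamma\|+c)\,n_{A_i}^{2\alpha}$. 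Demanding the proximity $\|\hat\beta_\gamma-\beta_\gamma\|_{\ell^2}\le c/(\|\beta_\gamma\|_{\ell^2}\cdot 48(\gamma+1)(k+1)p)$ and invoking Theorem~\ref{converge} to control the probability this fails gives a second exponential of the stated form. Summing over the $k+1$ environments and the entrywise Gramian/cross-moment concentration inequalities then produces the prefactor $2p(k+1)(3+4p)$ and the $(4k+9)$ multiplying the truncation term.

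The main obstacle will be bookkeeping the constants so that the exponent lands on $\min\bigl(E,\,c/(\|\beta_\gamma\|_{\ell^2}\cdot 48(\gamma+1)(k+1)p)\bigr)^2$: this minimum emerges from balancing the fixed-$\beta_\gamma$ error (II) against the Lipschitz/plug-in error (I), but deriving the factor $48(\gamma+1)(k+1)p$ cleanly requires careful tracking of the operator-norm bounds on the truncated empirical Gramians and cross-moments together with the combinatorial $w_i^2$ and $\gamma$ factors appearing in $\hat R^w$.
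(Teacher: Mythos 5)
Your decomposition into the plug-in error $\hat R^w(\hat\beta_\gamma)-\hat R^w(\beta_\gamma)$ and the fixed-parameter fluctuation $\hat R^w(\beta_\gamma)-R^w(\beta_\gamma)$, handled respectively by a Lipschitz-in-$\beta$ bound fed into Theorem~\ref{converge} and by truncation at level $n_{A_i}^\alpha$ plus Hoeffding, is essentially the paper's own argument; the paper merely performs the same split term-by-term after expanding each $\hat R_{A_i}$ into its quadratic form, which is how the cascade of factors $2,6,12,24,48$ and the minimum with $E$ arise. The proposal is correct and follows the same route.
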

\begin{proof}[Proof of the first claim]
On one hand we have that in the population case
$$R_{A_i}(\beta_\gamma)=\E\left[(Y^{A_i})^2\right]+2\sum_{l=1}^p\beta_\gamma(l)\E\left[Y^{A_i}X^{A_i}(l)\right]+\sum_{l=1}^p\beta_\gamma(l)^2\E\left[(X^{A_i}(l))^2\right]$$
while in the empirical setting
$$\hat{R}_{A_i}(\hat{\beta}_\gamma)=\frac{1}{n_{A_i}}\sum_{u=1}^{n_{A_i}}(Y^{A_i}_u)^2+2\sum_{l=1}^p\hat{\beta}_\gamma(l)\frac{1}{n_{A_i}}\sum_{u=1}^{n_{A_i}}Y^{A_i}_uX^{A_i}_u(l)+\sum_{l=1}^p\hat{\beta}_\gamma(l)^2\frac{1}{n_{A_i}}\sum_{u=1}^{n_{A_i}}(X^{A_i}_u(l))^2.$$
By the law of large numbers $\frac{1}{n_{A_i}}\sum_{u=1}^{n_{A_i}}(Y^{A_i}_u)^2\xrightarrow{a.s.}\E\left[(Y^{A_i})^2\right]$, \\$\frac{1}{n_{A_i}}\sum_{u=1}^{n_{A_i}}Y^{A_i}_uX^{A_i}_u(l)\xrightarrow{a.s.}\E\left[Y^{A_i}X^{A_i}(l)\right]$ and $\frac{1}{n_{A_i}}\sum_{u=1}^{n_{A_i}}(X^{A_i}_u(l))^2\xrightarrow{a.s.}\E\left[(X^{A_i}(l))^2\right]$,  for $1\le l\le p$ as $n_{A_i}\to\infty$. This combined with the fact that $\hat{\beta}_\gamma\xrightarrow{a.s.}\beta_\gamma$ shows that $\hat{R}_{A_i}(\hat{\beta}_\gamma)\xrightarrow{a.s.}R_{A_i}(\beta_\gamma)$
\end{proof}
The following two Corollaries illustrates a straight-forward application of the above concentration in measure result for the minimizer. 
\begin{corollary}\label{cor}
Under the hypothesis of Theorem \ref{converge}, if $1-F_{\left|X^{A_i}(1)\right|,...,\left|X^{A_i}(p)\right|,|Y^{A_i}|}(x,...,x) \le e^{-d_ix^{\psi_{i}}}$, for some $d_i,\psi_i>0$, whenever $x$ is large enough, then 
$$ \P\left( \n \tilde{\beta}_\gamma(\textbf{n}) -\beta_\gamma\n\ge c \right) \sim  O\left( 
\sum_{i=0}^k n_{A_i}e^{-d_{i}n_{A_i}^{\gamma_{i}\alpha}}
\vee
\sum_{i=0}^ke^{-r(c)^2n_{A_i}^{1-4\alpha}}
\right), $$
where $r(c)$ is as in Theorem \ref{converge} and $\n.\n$ is any vector norm on $\R^p$.
\end{corollary}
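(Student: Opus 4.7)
The plan is to apply Theorem \ref{converge} directly and estimate each of the two summands in its bound under the hypothesized tail decay. Since the statement is asymptotic in $c$ and in the $n_{A_i}$'s, the equivalence of norms on $\R^p$ means it suffices to treat the Euclidean norm; this only affects constants.

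First I would handle the joint-tail factor. By assumption, for $n_{A_i}$ large enough,
\begin{align*}
F_{|X^{A_i}(1)|,\ldots,|X^{A_i}(p)|,|Y^{A_i}|}(n_{A_i}^\alpha,\ldots,n_{A_i}^\alpha) \ge 1 - e^{-d_i n_{A_i}^{\alpha \psi_i}}.
\end{align*}
Bernoulli's inequality $(1-x)^n \ge 1 - nx$ for $x \in [0,1]$ gives
\begin{align*}
\bigl(F_{|X^{A_i}(1)|,\ldots,|X^{A_i}(p)|,|Y^{A_i}|}(n_{A_i}^\alpha,\ldots,n_{A_i}^\alpha)\bigr)^{n_{A_i}} \ge 1 - n_{A_i} e^{-d_i n_{A_i}^{\alpha \psi_i}}.
\end{align*}
Since $1 - \prod_{i=0}^k a_i \le \sum_{i=0}^k (1 - a_i)$ whenever each $a_i \in [0,1]$, combining these across environments yields
\begin{align*}
1 - \prod_{i=0}^k\bigl(F_{|X^{A_i}(1)|,\ldots,|X^{A_i}(p)|,|Y^{A_i}|}(n_{A_i}^\alpha,\ldots,n_{A_i}^\alpha)\bigr)^{n_{A_i}} \le \sum_{i=0}^k n_{A_i} e^{-d_i n_{A_i}^{\alpha \psi_i}},
\end{align*}
which absorbs the constant factor $5$ into the $O(\cdot)$ and supplies the first term in the claimed bound.

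Next, the remaining summand $(4p+2)\sum_{i=0}^k e^{-(\delta\wedge c)^2 E^2 n_{A_i}^{1-4\alpha}}$ from Theorem \ref{converge} is already of exactly the shape $O\bigl(\sum_{i=0}^k e^{-r(c)^2 n_{A_i}^{1-4\alpha}}\bigr)$ with $r(c) = (\delta\wedge c) E$, since $p$, $k$, and $\delta$ are fixed and $E$ depends only on population quantities. Taking the maximum (denoted $\vee$) of the two resulting asymptotic rates finishes the proof.

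The argument is routine once Theorem \ref{converge} is in hand; the only delicate point is making sure the Bernoulli estimate is applied in the regime where $e^{-d_i n_{A_i}^{\alpha \psi_i}}\in[0,1]$, which holds for all $n_{A_i}$ large enough, and simultaneously in the regime where the hypothesized exponential tail bound is valid. Both conditions require only that the $n_{A_i}$ exceed an index depending on $d_i, \psi_i$, so the asymptotic conclusion is unaffected.
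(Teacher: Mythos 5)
Your proof is correct and follows essentially the same route as the paper: apply Theorem \ref{converge} with the choice $r(c)=(\delta\wedge c)E$ and show the joint-tail product term is $O\bigl(\sum_{i=0}^k n_{A_i}e^{-d_i n_{A_i}^{\psi_i\alpha}}\bigr)$. The only difference is that you control $1-\prod_{i=0}^k F^{n_{A_i}}$ via Bernoulli's inequality together with $1-\prod_i a_i\le\sum_i(1-a_i)$, whereas the paper Taylor-expands the logarithm and exponential to first order; your variant is the cleaner of the two, since it yields a genuine non-asymptotic inequality with no higher-order error terms to absorb.
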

\begin{proof}
By writing
\small
\begin{align*}
&\prod_{i=0}^k\left(F_{\left|X^{A_i}(1)\right|,...,\left|X^{A_i}(p)\right|,|Y^{A_i}|}(n_{A_i}^\alpha,...,n_{A_i}^\alpha)\right)^{n_{A_i}}
\\
=
&\exp\left(\sum_{i=0}^k n_{A_i}\log\left(1- F_{\left|X^{A_i}(1)\right|,...,\left|X^{A_i}(p)\right|,|Y^{A_i}|}(n_{A_i}^\alpha,...,n_{A_i}^\alpha)\right) \right)
\\
=&\exp\left(-\sum_{i=0}^k  n_{A_i}\left(F_{\left|X^{A_i}(1)\right|,...,\left|X^{A_i}(p)\right|,|Y^{A_i}|}(n_{A_i}^\alpha,...,n_{A_i}^\alpha)
+
O\left( F_{\left|X^{A_i}(1)\right|,...,\left|X^{A_i}(p)\right|,|Y^{A_i}|}(n_{A_i}^\alpha,...,n_{A_i}^\alpha)^2\right) \right)\right)
\\
=&1-\sum_{i=0}^k n_{A_i}F_{\left|X^{A_i}(1)\right|,...,\left|X^{A_i}(p)\right|,|Y^{A_i}|}(n_{A_i}^\alpha,...,n_{A_i}^\alpha)
+
O\left( n_{A_i}^2F_{\left|X^{A_i}(1)\right|,...,\left|X^{A_i}(p)\right|,|Y^{A_i}|}(n_{A_i}^\alpha,...,n_{A_i}^\alpha)^2\right),
\end{align*}
\normalsize
we see that
\small
\begin{align*}
&1-\prod_{i=0}^k\left(F_{\left|X^{A_i}(1)\right|,...,\left|X^{A_i}(p)\right|,|Y^{A_i}|}(n_{A_i}^\alpha,...,n_{A_i}^\alpha)(n_{A_i}^\alpha)\right)^{n_{A_i}}
\\
&=\sum_{i=0}^k \left(n_{A_i}F_{\left|X^{A_i}(1)\right|,...,\left|X^{A_i}(p)\right|,|Y^{A_i}|}(n_{A_i}^\alpha,...,n_{A_i}^\alpha)
+
O\left( n_{A_i}^2F_{\left|X^{A_i}(1)\right|,...,\left|X^{A_i}(p)\right|,|Y^{A_i}|}(n_{A_i}^\alpha,...,n_{A_i}^\alpha)^2\right)\right) 
\\
&\le\sum_{i=0}^k n_{A_i}e^{-d_{i}n_{A_i}^{\gamma_{i}\alpha}}+O\left(\sum_{i=0}^k n_{A_i}^2e^{-2d_{i}n_{A_i}^{\psi_{i}\alpha}}\right).
\end{align*}
\normalsize
By plugging this into the second term on the right-hand side of \eqref{Pref} (and the equivalence of vector norms on $\R^p$) it now follows that
$$ \P\left( \n \tilde{\beta}_\gamma(\textbf{n}) -\beta_\gamma\n\ge c \right) 
\sim 
O\left( \sum_{i=0}^k n_{A_i}e^{-d_{i}n_{A_i}^{\psi_{i}\alpha}}
\vee
\sum_{i=0}^ke^{-r(c)n_{A_i}^{1-4\alpha}}
\right)  $$
\end{proof}
The next corollary gives convergence rates based on the number of finite moments
\begin{corollary}
Suppose $M(\eta)<\infty$ for some $\eta> 4$. Then for any $0<\delta<1$ and $\frac{1}{\eta}<\alpha<\frac 14 $, there exists $N_{A_0},...,N_{A_k}$ such that if $n_{A_i}\ge N_{A_i}$ then
\begin{align*}
&\P\left( \n \tilde{\beta}_\gamma(\textbf{n}) -\beta_\gamma\n_{\mathit{l}^2}\ge c \right)\le
(4p+2)\sum_{i=0}^ke^{-\left(\delta\wedge c\right)^2 E^2 n_{A_i}^{1-4\alpha}}
+5\frac{(p+1)(k+1) M_{X,Y}(\eta)}{\left(n_{A_0}\wedge...\wedge n_{A_k}\right)^{\alpha\eta-1}},
\end{align*}
with $E$ as in Theorem \ref{converge} and $M_{X,Y}(\eta)=\max_{i,l}\left(\E\left[\left|X^{A_i}(l)\right|^\eta\right]\right)\vee\max_{i}\E\left[\left|Y^{A_i}\right|^\eta\right]$.
\end{corollary}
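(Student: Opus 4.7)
The plan is to derive this corollary as a direct application of the second bullet of Theorem \ref{converge}, keeping the first term (the sub-Gaussian-type sum) verbatim and replacing the CDF-product term by a polynomial tail bound obtained from Markov's inequality.

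First I would start from the inequality in Theorem \ref{converge} valid for $n_{A_i}\ge N_{A_i}$. The first summand on the right-hand side is already in the form stated in the corollary, so the entire job is to bound
\[
1-\prod_{i=0}^k\left(F_{\left|X^{A_i}(1)\right|,\ldots,\left|X^{A_i}(p)\right|,|Y^{A_i}|}(n_{A_i}^\alpha,\ldots,n_{A_i}^\alpha)\right)^{n_{A_i}}
\]
by a term of order $(n_{A_0}\wedge\cdots\wedge n_{A_k})^{-(\alpha\eta-1)}$. Note that the constraint $\alpha>1/\eta$ guarantees $\alpha\eta-1>0$, which is exactly what makes the resulting exponent positive.

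Next I would bound the complementary joint CDF by a union bound and then by Markov's inequality at moment $\eta$: for each environment $i$,
\[
1-F_{|X^{A_i}(1)|,\ldots,|X^{A_i}(p)|,|Y^{A_i}|}(n_{A_i}^\alpha,\ldots,n_{A_i}^\alpha)
\le \sum_{l=1}^p\P\!\left(|X^{A_i}(l)|>n_{A_i}^\alpha\right)+\P\!\left(|Y^{A_i}|>n_{A_i}^\alpha\right)
\le \frac{(p+1)M_{X,Y}(\eta)}{n_{A_i}^{\alpha\eta}}.
\]
Applying the elementary Bernoulli-type inequality $1-(1-x)^{n}\le nx$ for $x\in[0,1]$ yields
\[
1-F^{n_{A_i}}(n_{A_i}^\alpha,\ldots,n_{A_i}^\alpha)\le n_{A_i}\cdot\frac{(p+1)M_{X,Y}(\eta)}{n_{A_i}^{\alpha\eta}}=\frac{(p+1)M_{X,Y}(\eta)}{n_{A_i}^{\alpha\eta-1}}.
\]
Then, using the standard inequality $1-\prod_{i=0}^k a_i\le \sum_{i=0}^k(1-a_i)$ for $a_i\in[0,1]$, I combine the environments:
\[
1-\prod_{i=0}^k F^{n_{A_i}}(n_{A_i}^\alpha,\ldots,n_{A_i}^\alpha)\le \sum_{i=0}^k\frac{(p+1)M_{X,Y}(\eta)}{n_{A_i}^{\alpha\eta-1}}\le \frac{(k+1)(p+1)M_{X,Y}(\eta)}{\left(n_{A_0}\wedge\cdots\wedge n_{A_k}\right)^{\alpha\eta-1}}.
\]

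Finally I substitute this into the second bullet of Theorem \ref{converge}, carrying the factor of $5$, which produces the claimed bound. There is no real obstacle here, since the work has already been done in Theorem \ref{converge}; the only thing to be careful about is to make sure that the threshold $n_{A_i}^\alpha$ is eventually in the regime where the tail decomposition makes sense (so that Markov is applied with a positive exponent, i.e.\ $\alpha\eta>1$), and to verify that the same $N_{A_i}$'s from Theorem \ref{converge} work here since we are only replacing an upper bound by a (possibly larger) upper bound.
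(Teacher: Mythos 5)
Your proposal is correct and follows essentially the same route as the paper: Markov's inequality at moment $\eta$ to get $1-F_{|X^{A_i}(1)|,\ldots,|X^{A_i}(p)|,|Y^{A_i}|}(x,\ldots,x)\le (p+1)M_{X,Y}(\eta)/x^\eta$, followed by the product-to-sum bound $1-\prod_i a_i^{n_i}\le\sum_i n_i(1-a_i)$ (the paper invokes this as ``an inequality analogous to (1.45) in the supplementary material,'' obtained there via a log/exp expansion, whereas your Bernoulli-plus-telescoping derivation is a cleaner way to the same estimate). The constants and the role of $\alpha\eta>1$ match the paper's statement, so no gap.
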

\begin{proof}
By the Markov inequality we have for $x\ge 0$
\begin{align*}
1-F_{\left|X^{A_i}(1)\right|,...,\left|X^{A_i}(p)\right|,|Y^{A_i}|}(x,...,x)
&=\P\left(\left|X^{A_i}(1)\right|^\eta\vee,...,\vee\left|X^{A_i}(p)\right|^\eta\vee|Y^{A_i}|^\eta> x^\eta\right)
\nonumber
\\
&\le  \P\left(\sum_{l=1}^p\left|X^{A_i}(l)\right|^\eta+\left|Y^{A_i}\right|^\eta\ge x^\eta\right)
\le \frac{(p+1)M_{X,Y}(\eta)}{x^\eta}.
\end{align*}
Combining this with Theorem \ref{converge} and an inequality analogous to (1.45) in the supplementary material gives the result.
\end{proof}
When the target and covariates have bounded support, the  the tail term \\$1-F_{\left|X^{A_i}(1)\right|,...,\left|X^{A_i}(p)\right|,|Y^{A_i}|}(x,...,x)$ will vanish for large $x$ which implies that
$$\P\left( \n \tilde{\beta}_\gamma(\textbf{n}) -\beta_\gamma\n\ge c \right) \sim  O\left( 
\sum_{i=0}^ke^{-\left(\delta\wedge c\right)^2 R^2n_{A_i}^{1-4\alpha}}
\right). $$
Next, consider the case when $A_i(1),...,A_i(p),\mathbb{\epsilon}_0,...,\mathbb{\epsilon}_p,\mathbb{\epsilon}_Y$ are jointly normal for each $1\le i\le k$. Since 
$$\textbf{N}=\left(X^{A_0}(1),...,X^{A_0}(p),X^{A_i}(1),...,X^{A_i}(p),Y^{A_i}\right)$$
is a linear transform of  $\left(A_i(1),...,A_i(p),\mathbb{\epsilon}_0,...,\mathbb{\epsilon}_p,\mathbb{\epsilon}_Y\right)$, $\textbf{N}$ must also be normal. Let $\mu$ and $\Sigma$ denote the mean vector and covariance matrix of $\textbf{N}$. Note that 
\begin{align*}
F_{\left|X^{A_i}(1)\right|,...,\left|X^{A_i}(p)\right|,|Y^{A_i}|}(x,...,x)
&\ge \int_{\n (\textbf{x},y) \n_2\le x}\frac{\exp\left(-\frac{1}{2}\left((\textbf{x},y)-\mu\right)^T\Sigma^{-1}\left((\textbf{x},y)-\mu\right)\right)}{(2\pi)^{\frac{p+1}{2}}\sqrt{\det\left(\Sigma\right)}} d\textbf{x}dy 
\\
&\ge 1-\exp(-cx^2)
\end{align*}
for some $c>0$. From Corollary \ref{cor},
$$ \P\left( \n \tilde{\beta}_\gamma(\textbf{n}) -\beta_\gamma\n\ge c \right) \sim  O\left( 
\sum_{i=0}^k n_{A_i}e^{-cn_{A_i}^{2\alpha}}
\vee
\sum_{i=0}^ke^{-r(c)^2n_{A_i}^{1-4\alpha}}
\right), $$
\subsection{Conditional q-variance, conditioning on a bounded inverse}
\label{sec:variance}
In this section we shall study bounds for moments of the expected deviation of $\hat{\beta}(\textbf{n})$ from its expected value under certain conditioning. There is one big problem with just studying these moments as-is, i.e., without conditioning: they simply do not exist in general due to the fact that $\left(G_+(\textbf{n})+\gamma G_\Delta(\textbf{n})\right)^{-1}$ is not  integrable. What \textit{is} possible is to somewhat circumvent this issue (or to do the the ``next best thing'') by instead looking at these moments given that $\left(G_+(\textbf{n})+\gamma G_\Delta(\textbf{n})\right)^{-1}$ is bounded by some given constant. This is  what we do in the following results. For this we define the conditional $q$-variances,
\begin{defin}[Conditional q-variance]
Let $X$ be a $p$-dimensional random vector such that $X(l)1_A\in L^q(\P)$ for $2\le q<\infty$ for some event $A$ with $\P\left(A\right)>0$, then we define
$$Var_q\left( X\| A\right) = \frac{\E\left[ \n X1_A -\E\left[X1_A\right] \n_{\mathit{l}^2}^q \right]}{\P(A)}.$$
\end{defin}
Our first result shows that if $M(\zeta)<\infty$ for large enough $\zeta$ then conditioning on a bounded inverse, the q-variance for $\hat{\beta}_\gamma(\textbf{n})$ is of order $\left(n_{A_0}\wedge...\wedge n_{A_k}\right)^{q/2}$.
\begin{thm}\label{varthm1}
Define $M(\zeta)=\max_{i,l}\E\left[ |X^{A_i}(l)|^{\zeta} \right]$, $M_w(\zeta')=\max_{i,l}\n X^{A_i}(l)\n^\zeta_{L^{\zeta,w}(\P)}$ (where $\n .\n^\zeta_{L^{\zeta,w}(\P)}$ denotes the weak $L^\zeta$-norm), $\tilde{M}(\zeta)=\max_{i,l}\E\left[ |X^{A_i}(l)Y^{A_i}|^{\zeta} \right]$ and
$$C_\textbf{n}=\left\{\n \left(\hat{G}_+(\textbf{n})+\gamma\hat{G}_\Delta(\textbf{n}) \right)^{-1} \n_{\mathit{l}^2} \le C\right\} $$ 
\begin{itemize}
\item[(1)]If $\tilde{M}(\zeta)<\infty$ for $\zeta>q$, $M_w(\zeta')<\infty$ for some $\zeta'>4$ and \\$C>\n \left( G_+ +\gamma G_\Delta\right)^{-1} \n_{\mathit{l}^2}+ \n \left( G_+ +\gamma G_\Delta\right)^{-1} \n_{\mathit{l}^2}^3$. Then
$$Var_q\left(\hat{\beta}_\gamma(\textbf{n}) \|C_\textbf{n}\right)
\le
\frac{D}{\left(n_{A_0}\wedge...\wedge n_{A_k}\right)^{\left(\left(\alpha\zeta'-1\right)\frac{\zeta-2q}{\zeta}\right)\wedge \frac{q}{2}}}.$$
for any $0<\alpha<\frac 14$, $n_{A_i}\ge N$, where $N$ depend on $p,\zeta,M(\zeta),q,\gamma,k$ and $C$. In terms of $M,M_w$ and $\tilde{M}$ we have that $D$ is proportional to $(1\vee M(2q\wedge 4(\alpha\zeta'-1))\vee\tilde{M}(\zeta)\vee M_w(\zeta'))^{1+\frac{2}{\zeta-1}}$
\item[(2)] If we assume $M(\zeta)<\infty$, $\max_i\E\left[\left|Y^{A_i}\right|^{\zeta}\right]<\infty$ for $\zeta>2q$, and $C$ is as in (1) then we have an analogous bound to the one in (1), albeit with a different constant $\tilde{D}$.
\end{itemize}
 The explicit expressions for $D$ and $N$ can be found at the end of the proof. 
\end{thm}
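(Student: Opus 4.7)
The plan is to reduce $Var_q(\hat{\beta}_\gamma(\textbf{n}) \| C_\textbf{n})$ to a control on $\E[\|\hat{\beta}_\gamma - \beta_\gamma\|_{\mathit{l}^2}^q \mathbf{1}_{C_\textbf{n}}]/\P(C_\textbf{n})$, and then to attack the numerator by combining the concentration inequality of Theorem~\ref{converge} with a layer-cake decomposition and a truncation. For the reduction, set $\mu = \E[\hat{\beta}_\gamma \mathbf{1}_{C_\textbf{n}}]$ and use the identity
$$\hat{\beta}_\gamma \mathbf{1}_{C_\textbf{n}} - \mu = (\hat{\beta}_\gamma - \beta_\gamma)\mathbf{1}_{C_\textbf{n}} + (\beta_\gamma\mathbf{1}_{C_\textbf{n}} - \mu),$$
noting that the second summand is controlled by $\E[\|\hat{\beta}_\gamma - \beta_\gamma\|_{\mathit{l}^2}\mathbf{1}_{C_\textbf{n}}]$ (via Jensen) together with $\|\beta_\gamma\|_{\mathit{l}^2}\P(C_\textbf{n}^c)$ on $C_\textbf{n}^c$. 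This shows $Var_q(\hat{\beta}_\gamma(\textbf{n}) \| C_\textbf{n})$ is bounded up to absolute constants by $\E[\|\hat{\beta}_\gamma - \beta_\gamma\|_{\mathit{l}^2}^q \mathbf{1}_{C_\textbf{n}}]/\P(C_\textbf{n})$ plus a remainder proportional to $\|\beta_\gamma\|_{\mathit{l}^2}^q \P(C_\textbf{n}^c)$. The probability $\P(C_\textbf{n}^c)$ is handled by noting that $\hat{G}_+(\textbf{n}) + \gamma \hat{G}_\Delta(\textbf{n}) \to G_+ + \gamma G_\Delta$ a.s.\ and applying Lemma~1 in the appendix: the strict separation $C > \|(G_+ + \gamma G_\Delta)^{-1}\|_{\mathit{l}^2} + \|(G_+ + \gamma G_\Delta)^{-1}\|_{\mathit{l}^2}^3$ assumed in the hypothesis (whose form originates exactly from the Neumann-series estimate for the inverse of a perturbed matrix) forces $\P(C_\textbf{n}) \to 1$, in fact $\P(C_\textbf{n}) \ge 1/2$ once $n_{A_i} \ge N$, so the denominator causes no problem.

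For the numerator I use the layer-cake identity
$$\E[\|\hat{\beta}_\gamma - \beta_\gamma\|_{\mathit{l}^2}^q \mathbf{1}_{C_\textbf{n}}] = \int_0^\infty q t^{q-1}\, \P(\|\hat{\beta}_\gamma - \beta_\gamma\|_{\mathit{l}^2} > t,\, C_\textbf{n})\, dt,$$
observing that on $C_\textbf{n}$ the plug-in inverse exists and $\hat{\beta}_\gamma = \tilde{\beta}_\gamma$ so the third bullet of Theorem~\ref{converge} applies to the integrand with $c = t$. I split the integral at a threshold $T = T(n)$. For $t \le T$, the exponential piece $V(t)\sum_i e^{-t^2 E^2 n_{A_i}^{1-4\alpha}}$ integrates, after absorbing the $e^{Kt^2}$ growth of $V(t)$ into the exponent (which requires $n_{A_i}^{1-4\alpha}$ to exceed an absolute constant $K$, one of the conditions defining $N$), to a Gaussian contribution of order $n^{-(1-4\alpha)q/2}$; while the polynomial-tail piece, which under the weak-$L^{\zeta'}$ hypothesis is of order $n^{-(\alpha\zeta'-1)}$ (this is the only place $M_w(\zeta')$ enters), integrates on $[0,T]$ against $q t^{q-1}$ to $T^q n^{-(\alpha\zeta'-1)}$. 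For $t > T$, on $C_\textbf{n}$ I exploit the deterministic bound $\|\hat{\beta}_\gamma\|_{\mathit{l}^2}\mathbf{1}_{C_\textbf{n}} \le C\|\hat{Z}_+ + \gamma \hat{Z}_\Delta\|_{\mathit{l}^2}$ together with a Marcinkiewicz--Zygmund estimate for $\hat{Z}_+ + \gamma \hat{Z}_\Delta$ obtained from $\tilde{M}(\zeta) < \infty$, combined with a H\"older split at exponent $\zeta/q$ and the concentration tail re-applied at $c = T$, yielding a polynomial contribution of order $T^{q-\zeta}$. Optimizing $T$ to balance these three competing pieces fixes the rate $(n_{A_0}\wedge\cdots\wedge n_{A_k})^{-[(\alpha\zeta'-1)(\zeta-2q)/\zeta \wedge q/2]}$ and determines the explicit form of the constant $D$.

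Part (2) follows by precisely the same template, with the weak-$L^{\zeta'}$ tail estimate $\P(|X^{A_i}(l)| > x) \le M_w(\zeta')/x^{\zeta'}$ replaced by the classical Markov bound $\P(|X^{A_i}(l)| > x) \le M(\zeta)/x^\zeta$ (and similarly for $Y^{A_i}$), and invoking the second (all-$\textbf{n}$) bullet of Theorem~\ref{converge} rather than the third; only the explicit constant $\tilde{D}$ differs. The main obstacle is the delicate bookkeeping of the $t$-dependence of the prefactor $V(t)$ from Theorem~\ref{converge}: $V(t)$ grows like $\exp(K t^2)$ and must be shown to be strictly dominated by the Gaussian factor $\exp(-t^2 E^2 n^{1-4\alpha})$ uniformly in $t \ge 0$ for $n$ large enough, so that the Gaussian part of the integral genuinely produces the advertised $n^{-(1-4\alpha)q/2}$-rate. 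Alongside this, tracking the dependence of $D$ on $p$, $k$, $\gamma$, $C$, $\|(G_+ + \gamma G_\Delta)^{-1}\|_{\mathit{l}^2}$, $\|\beta_\gamma\|_{\mathit{l}^2}$, $M(\cdot)$, $M_w(\cdot)$ and $\tilde{M}(\cdot)$ so that the stated proportionality $D \propto (1 \vee M \vee \tilde{M} \vee M_w)^{1+2/(\zeta-1)}$ actually comes out correctly is a matter of explicit, but purely mechanical, computation with the constants appearing in the bound of Theorem~\ref{converge}.
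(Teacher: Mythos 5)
Your overall architecture (reduce the conditional $q$-variance to $\E[\n\hat{\beta}_\gamma-\beta_\gamma\n_{\mathit{l}^2}^q 1_{C_\textbf{n}}]/\P(C_\textbf{n})$, show $\P(C_\textbf{n})\ge 1/2$ for $n$ large, and treat the complement of the good set by H\"older against a moment bound on $\hat{Z}_++\gamma\hat{Z}_\Delta$) matches the paper's Steps 1, 3 and 8. The genuine divergence is in the core estimate on the good event, and that is where your argument has a gap. You propose to obtain $\E[\n\hat{\beta}_\gamma-\beta_\gamma\n^q 1_{C_\textbf{n}}]$ by the layer-cake formula with the concentration inequality of Theorem \ref{converge} inserted at $c=t$. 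But that inequality decays only in $(c\wedge\delta)^2$, not in $c^2$: for $t>\delta$ the exponential factor is frozen at $e^{-\delta^2 E^2 n^{1-4\alpha}}$ and stops decaying in $t$, while the prefactor $V$ grows like $\exp(K(t\vee\delta)^2)$. So the integrand $qt^{q-1}V(t)e^{-(t\wedge\delta)^2E^2n^{1-4\alpha}}$ is \emph{increasing} in $t$ beyond $t=\delta$, and the claim that the exponential piece "integrates to a Gaussian contribution" after "absorbing $e^{Kt^2}$ into the exponent" fails: there is no $t^2$ left in the decaying exponent to absorb into. This can be patched (use monotonicity of the tail and bound $\P(\cdot>t)\le\P(\cdot>\delta)$ for $t\in[\delta,T]$), but even then the piece $t\le\delta$ integrates to order $(E^2n^{1-4\alpha})^{-q/2}=n^{-(1-4\alpha)q/2}$, which is strictly worse than the $n^{-q/2}$ component of the rate asserted in Theorem \ref{varthm1} (and emphasized in Remark \ref{rateremark}); pushing $\alpha\downarrow 0$ to recover $q/2$ destroys the $(\alpha\zeta'-1)$ part of the rate. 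The loss is structural: the concentration bound was itself obtained by truncating at level $n^\alpha$ and applying Hoeffding, so integrating it can never return the full CLT-scale moment rate.

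The paper avoids this entirely: on the good set $D_\textbf{n}(\delta)$ it linearizes the matrix inverse (your Neumann-series observation), reducing $\n\hat{\beta}_\gamma-\beta_\gamma\n$ to norms of $\hat{G}_++\gamma\hat{G}_\Delta-(G_++\gamma G_\Delta)$ and $\hat{Z}_++\gamma\hat{Z}_\Delta-(Z_++\gamma Z_\Delta)$, writes each as a normalized sum of centered i.i.d.\ increments, and applies Rio's Marcinkiewicz--Zygmund-type moment inequality \emph{directly} to those sums. That yields $\E[\n\hat{Z}_+(\textbf{n})+\gamma\hat{Z}_\Delta(\textbf{n})-(Z_++\gamma Z_\Delta)\n^q]\lesssim \tilde{M}(q)\,n^{-q/2}$ with the genuine $n^{-q/2}$, which is exactly what the layer-cake route cannot produce. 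If you want to keep your strategy, you would need a concentration inequality with true sub-Gaussian (or at least $t$-decaying) dependence for all $t$ and a prefactor not growing in $t$; with the bound actually available from Theorem \ref{converge}, the direct moment-inequality route is not merely a stylistic alternative but necessary to reach the stated rate.
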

\begin{rema}\label{rateremark}
In particular, if $M(\zeta)<\infty$ for all $\zeta\ge\frac{2+(4\alpha+1)q}{4\alpha}+ \sqrt{\left(\frac{2+(4\alpha+1)q}{4\alpha}\right)^2-\frac{2q}{\alpha}}$ then \\$\left(\alpha\zeta-1\right)\frac{\zeta-2q}{\zeta}\ge \frac{q}{2}$ which implies that the rate of convergence is is $\frac{1}{\left(n_{A_0}\wedge...\wedge n_{A_k}\right)^{q/2}}$.
\end{rema}

\subsection{Conditional variance, conditioning on a stopped sigma-algebra}
\label{sec:stopping}

Consider a sequence of $k+1$-tuples, with entries in $\N$, $\{\textbf{n}_l\}_l$ such that if $l_1< l_2$ then $\textbf{n}_{l_1}(j)\le \textbf{n}_{l_2}(j)$, $\forall 0\le j\le k$ (we use a slight abuse of notation here, allowing for zero indexation since we want to associate $\textbf{n}_l(0)$ with the observational environment) and let $\mathcal{F}_l$ be the sigma algebra generated by the random variables $\mathbb{X}^{A_i}(\textbf{n}_l)_{u,v}$, for $1\le u\le \textbf{n}_l(i)$, $1\le v\le p$, and $\mathbb{Y}^{A_i}_{u}(\textbf{n}_l)$ for $1\le u\le \textbf{n}_l(i)$. Then it is clear that $\{\mathcal{F}_l\}_{l\ge 1}$ forms a filtration.
Given $\delta\in (0,1)$, let
\begin{align*}
\tau_{\delta,1}
=
\inf\left\{
l\ge 1: 
\n G_++\gamma G_\Delta+\left(\hat{G}_+(\textbf{n}_l)+\gamma\hat{G}_\Delta(\textbf{n}_l)\right)\n_{\mathit{l}^2}
<
\delta \n \left(G_++\gamma G_\Delta\right)^{-1}\n_{\mathit{l}^2} 
\right\}
\end{align*}
and
\begin{align*}
&\tau_{\delta,m}
=
\inf\left\{l> \tau_{\delta,m-1}: 
\n G_++\gamma G_\Delta-\left(\hat{G}_+(\textbf{n}_l)+\gamma\hat{G}_\Delta(\textbf{n}_l)\right)\n_{\mathit{l}^2}
<
\delta \n \left(G_++\gamma G_\Delta\right)^{-1}\n_{\mathit{l}^2}  
\right\}
\end{align*}
for $m>1$. Let us make two elementary observations about these stopping times. The first observation to make is that all of these stopping times are a.s.~finite. This fact follows since $\hat{G}_+(\textbf{n}_l)-\gamma\hat{G}_w(\textbf{n}_l)\xrightarrow{a.s.} G_++\gamma G_\Delta$ due to the law of large numbers applied entry-wise to the matrices $\{\hat{G}_+(\textbf{n}_l)-\gamma\hat{G}_w(\textbf{n}_l)\}_l$. A second observation is that $\P\left(\bigcup_{l=1}^\infty\bigcap_{m=l}^\infty\left\{\tau_{\delta,m+1}=\tau_{\delta,m}+1\right\}\right)=1$, i.e.~with probability 1, after some finite (but random) time there are no gaps between the stopping times. This means that $\{\hat{G}_+(\textbf{n}_{l})-\gamma\hat{G}_w(\textbf{n}_{l})\}_l$ never leaves the ball $B_{\delta \n \left(G_++\gamma G_\Delta\right)^{-1}\n_{\mathit{l}^2}}\left(G_++\gamma G_\Delta\right)$ after this time.
Let $\mathcal{X}_l=\sigma\left(X^{A_0}_1,...,X^{A_0}_{\textbf{n}_l(1)},...,X^{A_k}_1,...,X^{A_k}_{\textbf{n}_l(k+1)}\right)$ then $\{\mathcal{X}_l\}_l$ also forms a filtration and $\{\tau_{\delta,m}\}_m$ are also stopping times for this filtration. With this framework in mind we will now study conditional variance for the estimator with respect to this filtration.
\begin{thm}\label{aopt}
\begin{align*}
&Var\left(\hat{\beta}_\gamma\left(\textbf{n}_{\tau_{\delta,m}}\right) \| \mathcal{X}_{\tau_{\delta,m}}\right)
\\
&= 
\E\left[ \hat{\beta}_\gamma^T\left(\textbf{n}_{\tau_{\delta,m}}\right)\hat{\beta}_\gamma\left(\textbf{n}_{\tau_{\delta,m}}\right) \| \mathcal{X}_{\tau_{\delta,m}}\right]
-
\E\left[ \hat{\beta}_\gamma^T\left(\textbf{n}_{\tau_{\delta,m}}\right) \| \mathcal{X}_{\tau_{\delta,m}}\right]
\E\left[ \hat{\beta}_\gamma\left(\textbf{n}_{\tau_{\delta,m}}\right) \| \mathcal{X}_{\tau_{\delta,m}}\right]
\end{align*} is well-defined.
If we suppose that $(Y^{A_i},X^{A_i})$ has a joint density and that \\$\sup_{x\in \R^p} Var\left(Y^{A_i}\|X^{A_i}=x\right)<\infty$ then  
$$Var\left(\hat{\beta}_\gamma\left(\textbf{n}_{\tau_{\delta,m}}\right) \| \mathcal{X}_{\tau_{\delta,m}}\right)
\le
C_m\sum_{i=0}^k\frac{1}{\textbf{n}_{\tau_{\delta,m}}(i)} \hspace{3mm}a.s.,$$
where $\{C_m\}_m\in O_P(1)$.
\end{thm}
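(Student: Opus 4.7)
The plan is to exploit the factorisation $\hat{\beta}_\gamma(\textbf{n}) = M(\textbf{n})\hat{Z}(\textbf{n})$, where $M(\textbf{n})=\left(\hat{G}_+(\textbf{n})+\gamma\hat{G}_\Delta(\textbf{n})\right)^{-1}$ depends only on the covariate samples and $\hat{Z}(\textbf{n})=\hat{Z}_+(\textbf{n})+\gamma\hat{Z}_\Delta(\textbf{n})$ carries the coupling to the targets. Since $\tau_{\delta,m}$ is a stopping time for $\{\mathcal{X}_l\}$, the matrix $M(\textbf{n}_{\tau_{\delta,m}})$ is $\mathcal{X}_{\tau_{\delta,m}}$-measurable, and the inequality defining $\tau_{\delta,m}$ combined with the perturbation lemma (Lemma 1 in the appendix) produces an a.s.\ deterministic bound $\n M(\textbf{n}_{\tau_{\delta,m}})\n_{\mathit{l}^2}\le K_\delta$, provided $\delta$ is small enough for the perturbation lemma to apply. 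Together with the assumption $\sup_x Var(Y^{A_i}\| X^{A_i}=x)<\infty$ and the $L^2$-integrability of $X^{A_i}(l)$, this forces $X^{A_i}(l)Y^{A_i}\in L^2$, which supplies the conditional square-integrability needed for $Var(\hat{\beta}_\gamma(\textbf{n}_{\tau_{\delta,m}})\| \mathcal{X}_{\tau_{\delta,m}})$ to be well-defined.

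Treating $M:=M(\textbf{n}_{\tau_{\delta,m}})$ as a fixed matrix under the conditioning, I would compute
\begin{align*}
Var\left(\hat{\beta}_\gamma(\textbf{n}_{\tau_{\delta,m}})\| \mathcal{X}_{\tau_{\delta,m}}\right)
&=\operatorname{tr}\left(M\cdot\operatorname{Cov}(\hat{Z}(\textbf{n}_{\tau_{\delta,m}})\| \mathcal{X}_{\tau_{\delta,m}})\cdot M^T\right)
\\
&\le K_\delta^2\operatorname{tr}\operatorname{Cov}(\hat{Z}(\textbf{n}_{\tau_{\delta,m}})\| \mathcal{X}_{\tau_{\delta,m}}).
\end{align*}
The crucial observation is that, conditional on all covariates across all environments, the targets $\{Y_u^{A_i}\}_{i,u}$ remain independent: independence across environments is a standing hypothesis, while independence within an environment follows from the i.i.d.\ assumption on $(Y_u^{A_i},X_u^{A_i})_u$ and the existence of a joint density. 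Each entry of $\hat{Z}$ is of the form $\sum_i c_i\,\textbf{n}_{\tau_{\delta,m}}(i)^{-1}\sum_u X_u^{A_i}(l)Y_u^{A_i}$, so its conditional variance decomposes sample-wise, and with $V:=\sup_x Var(Y^{A_i}\| X^{A_i}=x)$ one obtains
\begin{align*}
\operatorname{tr}\operatorname{Cov}(\hat{Z}(\textbf{n}_{\tau_{\delta,m}})\| \mathcal{X}_{\tau_{\delta,m}})
\le V\sum_{l=1}^{p}\sum_{i=0}^{k} \frac{c_i^2}{\textbf{n}_{\tau_{\delta,m}}(i)^2}\sum_{u=1}^{\textbf{n}_{\tau_{\delta,m}}(i)}(X_u^{A_i}(l))^2.
\end{align*}
Extracting one factor of $\textbf{n}_{\tau_{\delta,m}}(i)^{-1}$ leaves an empirical second moment of $X^{A_i}(l)$, giving the advertised bound $C_m\sum_{i=0}^k \textbf{n}_{\tau_{\delta,m}}(i)^{-1}$.

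To verify $\{C_m\}_m\in O_P(1)$, I would use that each $\tau_{\delta,m}$ is a.s.\ finite (as observed immediately before the theorem) and that $\textbf{n}_{\tau_{\delta,m}}(i)\to\infty$ a.s.\ as $m\to\infty$. The strong law of large numbers then gives $\textbf{n}_{\tau_{\delta,m}}(i)^{-1}\sum_u(X_u^{A_i}(l))^2\to \E[(X^{A_i}(l))^2]$ a.s., and $C_m$ is a bounded deterministic function of these empirical moments multiplied by $VK_\delta^2$; almost-sure convergence implies tightness, hence $C_m\in O_P(1)$.

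The main obstacle is handling the conditioning at a random time correctly: one must justify that conditional on $\mathcal{X}_{\tau_{\delta,m}}$ the $Y$-samples remain independent with uniformly bounded conditional variances, even though the number of observed samples is itself a random function of the covariates. The standard device is to condition further on the event $\{\tau_{\delta,m}=l\}\in\mathcal{X}_l$ and then sum, which reduces the computation to the fixed-$\textbf{n}$ setting where the independence structure is transparent. A secondary technical point is ensuring that the defining inequality of $\tau_{\delta,m}$ is strong enough to yield a genuine uniform bound on $\n M\n_{\mathit{l}^2}$ via Lemma 1; for the perturbation argument to go through one may need $\delta$ sufficiently small relative to $\n(G_++\gamma G_\Delta)^{-1}\n_{\mathit{l}^2}$.
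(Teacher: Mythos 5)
Your computation of the bound is essentially the paper's argument in a cleaner packaging: the paper also uses the $\mathcal{X}_{\tau_{\delta,m}}$-measurability of the inverse Grammian, the stopping-time inequality together with the perturbation lemma to control $\n(\hat{G}_++\gamma\hat{G}_\Delta)^{-1}\n_{\mathit{l}^2}$ almost surely, the conditional independence of the targets given all covariates (derived from the joint density), and the law of large numbers along the stopping times to get $O_P(1)$. Where the paper expands $\E[\hat{\beta}^T\hat{\beta}\|\mathcal{X}_\tau]-\E[\hat{\beta}^T\|\mathcal{X}_\tau]\E[\hat{\beta}\|\mathcal{X}_\tau]$ term by term and watches the cross terms cancel, you get the same diagonal expression $\sum_i c_i^2\textbf{n}_\tau(i)^{-2}\sum_u\n\cdot\n^2\,Var(Y^{A_i}\|X^{A_i}=X^{A_i}_u)$ in one line via $\operatorname{Cov}(M\hat{Z}\|\mathcal{X}_\tau)=M\operatorname{Cov}(\hat{Z}\|\mathcal{X}_\tau)M^T$ and $\operatorname{tr}(M\Sigma M^T)\le\n M\n_{\mathit{l}^2}^2\operatorname{tr}\Sigma$; that part is a genuine, and welcome, streamlining. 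Your closing remarks about conditioning on $\{\tau_{\delta,m}=l\}$ and about $\delta$ versus $\n(G_++\gamma G_\Delta)^{-1}\n_{\mathit{l}^2}$ correctly identify the two technical pressure points.

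The gap is in the well-definedness claim. First, the implication ``$\sup_x Var(Y^{A_i}\|X^{A_i}=x)<\infty$ and $X^{A_i}(l)\in L^2$ force $X^{A_i}(l)Y^{A_i}\in L^2$'' is false as stated: $\E[(X(l)Y)^2]=\E[X(l)^2\,Var(Y\|X)]+\E[X(l)^2\,\E[Y\|X]^2]$, and the bounded conditional variance controls only the first summand; the second requires additional information (the paper instead extracts $X^{A_i}(u)Y^{A_i}\in L^2$ from the structural form $(I-B)^{-1}(\epsilon+A)$). Moreover, the well-definedness assertion in the theorem is made \emph{before} the joint-density and bounded-conditional-variance hypotheses are introduced, so it cannot rest on them. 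Second, and more substantively, even granting $X^{A_i}(l)Y^{A_i}\in L^2$, you still need the randomly stopped average $\textbf{n}_\tau(i)^{-1}\sum_{u=1}^{\textbf{n}_\tau(i)}X^{A_i}_u(l)Y^{A_i}_u$ to be square-integrable so that $\hat{\beta}_\gamma(\textbf{n}_\tau)\in L^1$ and $\hat{\beta}_\gamma^T(\textbf{n}_\tau)\hat{\beta}_\gamma(\textbf{n}_\tau)\in L^1$, which is what makes the conditional expectations in the variance decomposition meaningful in the usual sense. Because $\textbf{n}_\tau(i)$ is a random time driven by the covariates, this does not follow from the unstopped $L^2$ property alone; the paper closes this by observing that $M_m=\sum_{v\le m}(H_v-\E H_v)$ is a uniformly integrable $L^2$ martingale for the filtration $\{\mathcal{F}_m\}_m$, that $\textbf{n}_\tau(i)$ is a stopping time for it, and that the stopped martingale converges in $L^2$. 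Your proposal needs this (or an equivalent optional-stopping argument) to make Step 1 go through.
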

\begin{proof}
\textbf{Step 1}: The conditional variance is well-defined\\
Fix $m\ge 1$ and let, for the sake of brevity $\tau:=\tau_{\delta,m}$. We have that

\small
\begin{align}\label{intbound}
&|\hat{\beta}_\gamma^T(\textbf{n}_{\tau})\hat{\beta}_\gamma(\textbf{n}_{\tau})|
\nonumber
\\
&=\left| \sum_{i,j\not=0}^k\frac{(1+\gamma)w_i(1+\gamma)w_j}{\textbf{n}_{\tau}(i)\textbf{n}_{\tau}(j)}((\mathbb{Y}^{A_i})^T(\textbf{n}_{\tau})\mathbb{X}^{A_i} (\textbf{n}_{\tau})\left(\left(\hat{G}_+(\textbf{n}_{\tau})+\gamma\hat{G}_\Delta(\textbf{n}_{\tau}) \right)^{-1}\right)^T\left(\hat{G}_+(\textbf{n}_{\tau})+\gamma\hat{G}_\Delta (\textbf{n}_{\tau})\right)^{-1}(\mathbb{X}^{A_j}(\textbf{n}_{\tau}))^T\mathbb{Y}^{A_j}(\textbf{n}_{\tau})
\right.\nonumber
\\
&\left. +\sum_{i=1}^k\frac{(1+\gamma)w_i(1-\gamma)}{\textbf{n}_{\tau}(i)\textbf{n}_{\tau}(0)}((\mathbb{Y}^{A_i})^T(\textbf{n}_{\tau})\mathbb{X}^{A_i} (\textbf{n}_{\tau})\left(\left(\hat{G}_+(\textbf{n}_{\tau})+\gamma\hat{G}_\Delta(\textbf{n}_{\tau}) \right)^{-1}\right)^T\left(\hat{G}_+(\textbf{n}_{\tau})+\gamma\hat{G}_\Delta (\textbf{n}_{\tau})\right)^{-1}(\mathbb{X}^{A_0}(\textbf{n}_{\tau}))^T\mathbb{Y}^{A_0}(\textbf{n}_{\tau})
\right.\nonumber
\\
&\left. +\sum_{j=1}^k\frac{(1-\gamma)(1+\gamma)w_j}{\textbf{n}_{\tau}(0)\textbf{n}_{\tau}(j)}((\mathbb{Y}^{A_0})^T(\textbf{n}_{\tau})\mathbb{X}^{A_0} (\textbf{n}_{\tau})\left(\left(\hat{G}_+(\textbf{n}_{\tau})+\gamma\hat{G}_\Delta(\textbf{n}_{\tau}) \right)^{-1}\right)^T\left(\hat{G}_+(\textbf{n}_{\tau})+\gamma\hat{G}_\Delta (\textbf{n}_{\tau})\right)^{-1}(\mathbb{X}^{A_j}(\textbf{n}_{\tau}))^T\mathbb{Y}^{A_j}(\textbf{n}_{\tau}) \right|\nonumber
\\
&\le\sum_{i,j\not=0}^k\left|\frac{(1+\gamma)w_i(1+\gamma)w_j}{\textbf{n}_{\tau}(i)\textbf{n}_{\tau}(j)}\right|
\left|\langle\left(\hat{G}_+(\textbf{n}_{\tau})+\gamma\hat{G}_\Delta(\textbf{n}_{\tau}) \right)^{-1}(\mathbb{X}^{A_i}(\textbf{n}_{\tau}))^T\mathbb{Y}^{A_i}(\textbf{n}_{\tau}),\left(\hat{G}_+(\textbf{n}_{\tau})+\gamma\hat{G}_\Delta(\textbf{n}_{\tau}) \right)^{-1}(\mathbb{X}^{A_j}(\textbf{n}_{\tau}))^T\mathbb{Y}^{A_j}(\textbf{n}_{\tau}) \rangle_{\mathit{l}^2}\right|\nonumber
\\ 
&+2\sum_{i=1}^k\left|\frac{(1+\gamma)w_i(1-\gamma)}{\textbf{n}_{\tau}(i)\textbf{n}_{\tau}(0)}\right|
\left|\langle \left(\hat{G}_+(\textbf{n}_{\tau})+\gamma\hat{G}_\Delta(\textbf{n}_{\tau}) \right)^{-1}(\mathbb{X}^{A_i}(\textbf{n}_{\tau}))^T\mathbb{Y}^{A_i}(\textbf{n}_{\tau}),\left(\hat{G}_+(\textbf{n}_{\tau})+\gamma\hat{G}_\Delta(\textbf{n}_{\tau}) \right)^{-1}(\mathbb{X}^{A_0}(\textbf{n}_{\tau}))^T\mathbb{Y}^{A_0}(\textbf{n}_{\tau}) \rangle_{\mathit{l}^2}\right|\nonumber
\\
&+\left|\frac{(1-\gamma)(1-\gamma )}{\textbf{n}_{\tau}(0)\textbf{n}_{\tau}(0)}\right|
\left|\langle \left(\hat{G}_+(\textbf{n}_{\tau})+\gamma\hat{G}_\Delta(\textbf{n}_{\tau}) \right)^{-1}(\mathbb{X}^{A_0}(\textbf{n}_{\tau}))^T\mathbb{Y}^{A_0}(\textbf{n}_{\tau}),\left(\hat{G}_+(\textbf{n}_{\tau})+\gamma\hat{G}_\Delta(\textbf{n}_{\tau}) \right)^{-1}(\mathbb{X}^{A_0}(\textbf{n}_{\tau}))^T\mathbb{Y}^{A_0}(\textbf{n}_{\tau}) \rangle_{\mathit{l}^2}\right|\nonumber
\\ 
&\le\sum_{i,j=0}^k\left|\frac{(1+\gamma)w_i(1+\gamma)w_j}{\textbf{n}_{\tau}(i)\textbf{n}_{\tau}(j)}\right|\n 
\left(\hat{G}_+(\textbf{n}_{\tau})+\gamma\hat{G}_\Delta(\textbf{n}_{\tau}) \right)^{-1}(\mathbb{X}^{A_i}(\textbf{n}_{\tau}))^T\mathbb{Y}^{A_i}(\textbf{n}_{\tau})\n_{\mathit{l}^2} \cdot\nonumber
\\
&\n\left(\hat{G}_+(\textbf{n}_{\tau})+\gamma\hat{G}_\Delta(\textbf{n}_{\tau}) \right)^{-1}(\mathbb{X}^{A_j}(\textbf{n}_{\tau}))^T\mathbb{Y}^{A_j} (\textbf{n}_{\tau})\n_{\mathit{l}^2}\le\nonumber
\\
&\sum_{i,j=0}^k\left|\frac{(1+\gamma)w_i(1+\gamma)w_j}{\textbf{n}_{\tau}(i)\textbf{n}_{\tau}(j)}\right|\n \left(\hat{G}_+(\textbf{n}_{\tau})+\gamma\hat{G}_\Delta(\textbf{n}_{\tau}) \right)^{-1}\n_{\mathit{l}^2}^2 \n(\mathbb{X}^{A_i}(\textbf{n}_{\tau}))^T\mathbb{Y}^{A_i}(\textbf{n}_{\tau})\n_{\mathit{l}^2} 
\n(\mathbb{X}^{A_j}(\textbf{n}_{\tau}))^T\mathbb{Y}^{A_j} (\textbf{n}_{\tau})\n_{\mathit{l}^2}.
\end{align}
\normalsize
In order to bound the right-most expression in \eqref{intbound} we consider the following inequalities   
\begin{align*}
&\n \left(\hat{G}_+(\textbf{n}_{\tau})+\gamma\hat{G}_\Delta(\textbf{n}_{\tau}) \right)^{-1}\n_{\mathit{l}^2}^2
\le 
2\n \left(G_++\gamma G_\Delta\right)^{-1}-\left(\hat{G}_+(\textbf{n}_{\tau})+\gamma\hat{G}_\Delta(\textbf{n}_{\tau})\right)^{-1} \n_{\mathit{l}^2}^2
+
2\n \left(G_++\gamma G_\Delta\right)^{-1} \n_{\mathit{l}^2}^2
\\
&\le 2(1+\delta) \n \left(G_++\gamma G_\Delta\right)^{-1} \n^2\n G_++\gamma G_\Delta-\left(\hat{G}_+(\textbf{n}_\tau)+\gamma\hat{G}_\Delta(\textbf{n}_\tau)\right)\n_{\mathit{l}^2}
+
2\n \left(G_++\gamma G_\Delta\right)^{-1} \n_{\mathit{l}^2}^2
\\
&\le 2(1+\delta)\delta \n \left(G_++\gamma G_\Delta\right)^{-1} \n_{\mathit{l}^2}^3+2\n \left(G_++\gamma G_\Delta\right)^{-1} \n_{\mathit{l}^2}^2,
\end{align*}
where we used inequality (1.2) from the Appendix (which applies by definition of the stopping time) in the first inequality and the definition of the stopping time in the second inequality. Therefore

\begin{align*}
&\E\left[|\hat{\beta}_\gamma^T(\textbf{n}_{\tau})\hat{\beta}_\gamma(\textbf{n}_{\tau})|\right]
\le
\left(2(1+\delta)\delta \n \left(G_++\gamma G_\Delta\right)^{-1} \n_{\mathit{l}^2}^3+2\n \left(G_++\gamma G_\Delta\right)^{-1} \n_{\mathit{l}^2}^2\right)\cdot
\\
&\left(\sum_{i,j=0, i\not=j}^k\left|(1+\gamma)w_i(1+\gamma)w_j\right|\E\left[\frac{\n(\mathbb{X}^{A_i}(\textbf{n}_{\tau}))^T\mathbb{Y}^{A_i} (\textbf{n}_{\tau})\n_{\mathit{l}^2}}{\textbf{n}_\tau(i)}\right]
\E\left[\frac{\n(\mathbb{X}^{A_j}(\textbf{n}_{\tau}))^T\mathbb{Y}^{A_j} (\textbf{n}_{\tau})\n_{\mathit{l}^2}}{\textbf{n}_\tau(j)}\right]
+
\right.
\\
&\left.\sum_{i=1}^k(1+\gamma)w_i^2\E\left[\frac{\n(\mathbb{X}^{A_i}(\textbf{n}_{\tau}))^T\mathbb{Y}^{A_i} (\textbf{n}_{\tau})\n_{\mathit{l}^2}^2}{\textbf{n}_{\tau}(i)^2}\right]\right),
\end{align*}
where we used the independence property for the environmental samples. By the Jensen inequality it suffices to show that $\E\left[\frac{\n(\mathbb{X}^{A_i}(\textbf{n}_{\tau}))^T\mathbb{Y}^{A_i} (\textbf{n}_{\tau})\n_{\mathit{l}^2}^2}{\textbf{n}_{\tau}(i)^2}\right]<\infty$ for every $i$. Expanding the $\mathit{l}^2$-norms above and using the definitions of $\mathbb{X}^{A_i}$ and $\mathbb{Y}^{A_i}$ we find that,
\begin{align*}
\E\left[\frac{\n(\mathbb{X}^{A_i}(\textbf{n}_{\tau}))^T\mathbb{Y}^{A_i} (\textbf{n}_{\tau})\n_{\mathit{l}^2}^2}{\textbf{n}_{\tau}(i)^2}\right]
&=
\sum_{u=1}^p\E\left[\left( \frac{\sum_{v=1}^{\textbf{n}_{\tau}(i)}\mathbb{X}_{v,u}^{A_i}(\textbf{n}_\tau(i))\mathbb{Y}_{v}^{A_i}(\textbf{n}_\tau(i)) }{\textbf{n}_\tau(i)}\right)^2\right]
\\
&=\sum_{u=1}^p\E\left[\left( \frac{\sum_{v=1}^{\textbf{n}_{\tau}(i)} X_{v}^{A_i}(u)Y_{v}^{A_i}}{\textbf{n}_\tau(i)}\right)^2\right].
\end{align*}
We must show that all $p$ terms in sum on right-hand side above is finite. Fix $i$ and $1\le u\le p$, let $H_v=X_{v}^{A_i}(u)Y_{v}^{A_i}$, then $\{H_v\}_v$ is i.i.d. distributed according to $X^{A_i}(u)Y^{A_i}$. From the SEM it follows that $X^{A_i}(u)=\left(I-B\right)^{-1}_{u,.}\cdot\left((0,A)+\epsilon\right)$ and $Y^{A_i}=\left(I-B\right)^{-1}_{1,.}\cdot\left((0,A)+\epsilon\right)$, therefore $\{H_v\}_v$ is an $L^2$ sequence. Note that if we let $M_m=\sum_{v=1}^m \left(H_v-\E\left[H_v\right]\right)$ then $\{M_m\}_m$ is an $L^2$ martingale (and hence also a u.i. martingale) with respect to the filtration $\{\mathcal{F}_m\}_m$. Since $\textbf{n}_\tau(i)$ is a stopping time with respect to the filtration $\{\mathcal{X}_m\}_m$ and $\mathcal{X}_m \subset \F_m$, it must also be stopping time with respect to $\{\mathcal{F}_m\}_m$. It follows that $\{M_{m\wedge\textbf{n}_\tau(i)}\}_m$ is an $L^2$ martingale (and again, also a u.i. martingale) with respect to the filtration $\{\mathcal{F}_{m\wedge\textbf{n}_\tau(i)}\}_m$ and consequently converges both a.s. and in $L^2$. Since $M_{m\wedge\textbf{n}_\tau(i)}\xrightarrow{a.s.}\sum_{v=1}^{\textbf{n}_\tau(i)} H_v-\textbf{n}_\tau(i)\E\left[H_1\right]$ and since $\textbf{n}_\tau(i)\ge 1$ we obviously have that 
$$\frac{1}{\textbf{n}_\tau(i)}\left| \sum_{v=1}^{\textbf{n}_\tau(i)} H_v-\textbf{n}_\tau(i)\E\left[H_1\right] \right|= \left|\frac{1}{\textbf{n}_\tau(i)} \sum_{v=1}^{\textbf{n}_\tau(i)} H_v-\E\left[H_1\right] \right|\in L^2.$$
Since $\left|\E\left[H_1\right]\right|<\infty$ we conclude that $\frac{1}{\textbf{n}_\tau(i)} \sum_{v=1}^{\textbf{n}_\tau(i)} H_v\in L^2$ and this implies that \\$\E\left[\frac{\n(\mathbb{X}^{A_i}(\textbf{n}_{\tau}))^T\mathbb{Y}^{A_i} (\textbf{n}_{\tau})\n_{\mathit{l}^2}^2}{\textbf{n}_{\tau}(i)^2}\right]<\infty$.
We conclude that $E\left[ \hat{\beta}_\gamma^T(\textbf{n}_\tau)\hat{\beta}_\gamma(\textbf{n}_\tau)\|\mathcal{X}_\tau\right]$ is well-defined. For $1\le u\le p$, 
$$\E\left[ \left|\hat{\beta}_\gamma^T(\textbf{n}_\tau)_u\right|\right]\le \sqrt{\E\left[ \left|\hat{\beta}_\gamma^T(\textbf{n}_\tau)_u^2\right|\right]}\le\sqrt{\E\left[ \n \hat{\beta}_\gamma^T(\textbf{n}_\tau) \n_{\mathit{l}^2}^2 \right]},$$ 
which we have already shown is finite. We may therefore conclude that the conditional variance is well-defined.
\\
\\
\textbf{Step 2}: computing the bound\\
We wish to compute
$$E\left[ \hat{\beta}_\gamma^T(\textbf{n}_\tau)\hat{\beta}_\gamma(\textbf{n}_\tau)\|\mathcal{X}_\tau\right]-\E\left[ \hat{\beta}_\gamma^T(\textbf{n}_\tau)\|\mathcal{X}_\tau\right]\E\left[ \hat{\beta}_\gamma(\textbf{n}_\tau)\|\mathcal{X}_\tau\right].$$
To this end we first compute (where we drop the indexation in the sample matrices for brevity)
\small
\begin{align*}
&\E\left[ \hat{\beta}_\gamma^T(\textbf{n}_\tau)\hat{\beta}_\gamma(\textbf{n}_\tau)\|\mathcal{X}_\tau\right]   
=
\E\left[ \left(\hat{Z}_{+}^T+\gamma\hat{Z}_{\Delta}^T\right)\left(\left(\hat{G}_++\gamma\hat{G}_\Delta \right)^{-1}\right)^T\left(\hat{G}_++\gamma\hat{G}_\Delta \right)^{-1}\left(\hat{Z}_{+}+\gamma\hat{Z}_{\Delta}\right)\|\mathcal{X}_\tau\right]   
\\
=&\E\left[ 
\sum_{i=1}^k\frac{1}{\textbf{n}_\tau(i)}(1+\gamma)w_i(\mathbb{Y}^{A_i})^T\mathbb{X}^{A_i}\left(\left(\hat{G}_++\gamma\hat{G}_\Delta \right)^{-1}\right)^T\left(\hat{G}_++\gamma\hat{G}_\Delta \right)^{-1}	
\sum_{i=1}^k\frac{1}{\textbf{n}_\tau(i)}(1+\gamma)w_i(\mathbb{X}^{A_i})^T\mathbb{Y}^{A_i}\|\mathcal{X}_\tau\right]
\\
+&2\E\left[ 
\frac{1}{\textbf{n}_\tau(0)}(1-\gamma)(\mathbb{Y}^{A_0})^T\mathbb{X}^{A_0}\left(\left(\hat{G}_++\gamma\hat{G}_\Delta \right)^{-1}\right)^T\left(\hat{G}_++\gamma\hat{G}_\Delta \right)^{-1}	
\sum_{i=1}^k\frac{1}{\textbf{n}_\tau(i)}(1+\gamma)w_i(\mathbb{X}^{A_i})^T\mathbb{Y}^{A_i}\|\mathcal{X}_\tau\right]
\\
+&\E\left[ 
\frac{1}{\textbf{n}_\tau(0)}(1-\gamma)(\mathbb{Y}^{A_0})^T\mathbb{X}^{A_0}\left(\left(\hat{G}_++\gamma\hat{G}_\Delta \right)^{-1}\right)^T\left(\hat{G}_++\gamma\hat{G}_\Delta \right)^{-1}	
\frac{1}{\textbf{n}_\tau(0)}(1-\gamma)(\mathbb{X}^{A_0})^T\mathbb{Y}^{A_0}\|\mathcal{X}_\tau\right]
\\
=&\sum_{i,j=1}^k\frac{1}{\textbf{n}_\tau(i)\textbf{n}_\tau(j)}(1+\gamma)w_i(1+\gamma)w_j\E\left[(\mathbb{Y}^{A_i})^T\mathbb{X}^{A_i} \left(\left(\hat{G}_++\gamma\hat{G}_\Delta \right)^{-1}\right)^T\left(\hat{G}_++\gamma\hat{G}_\Delta \right)^{-1}(\mathbb{X}^{A_j})^T\mathbb{Y}^{A_j} \|\mathcal{X}_\tau\right]
\\
+&2\sum_{i=1}^k\frac{1}{\textbf{n}_\tau(i)\textbf{n}_\tau(0)}(1+\gamma)w_i(1-\gamma )\E\left[(\mathbb{Y}^{A_i})^T\mathbb{X}^{A_i} \left(\left(\hat{G}_++\gamma\hat{G}_\Delta \right)^{-1}\right)^T\left(\hat{G}_++\gamma\hat{G}_\Delta \right)^{-1}(\mathbb{X}^{A_0})^T\mathbb{Y}^{A_0} \|\mathcal{X}_\tau\right]
\\
+&\frac{1}{\textbf{n}_\tau(0)\textbf{n}_\tau(0)}(1-\gamma )(1-\gamma )\E\left[(\mathbb{Y}^{A_0})^T\mathbb{X}^{A_0} \left(\left(\hat{G}_++\gamma\hat{G}_\Delta \right)^{-1}\right)^T\left(\hat{G}_++\gamma\hat{G}_\Delta \right)^{-1}(\mathbb{X}^{A_0})^T\mathbb{Y}^{A_0} \|\mathcal{X}_\tau\right].
\end{align*}
\normalsize
If we now denote $M^{(i,j)}=\mathbb{X}^{A_i}\left(\left(\hat{G}_++\gamma\hat{G}_\Delta \right)^{-1}\right)^T\left(\hat{G}_++\gamma\hat{G}_\Delta \right)^{-1}(\mathbb{X}^{A_j})^T$ then
\small
\begin{align*}
&\E\left[(\mathbb{Y}^{A_i})^T\mathbb{X}^{A_i}\left(\left(\hat{G}_++\gamma\hat{G}_\Delta \right)^{-1}\right)^T\left(\hat{G}_++\gamma\hat{G}_\Delta \right)^{-1}  (\mathbb{X}^{A_j})^T\mathbb{Y}^{A_j}\|\mathcal{X}_\tau\right]=
\sum_{u=1}^{\textbf{n}_\tau(i)}\sum_{v=1}^{\textbf{n}_\tau(j)}M^{(i,j)}_{uv}\E\left[ Y_u^{A_i}Y_v^{A_j} \|\mathcal{X}_\tau\right].
\end{align*}
\normalsize
When $u\not=v$ or $i\not=j$, 
$$\E\left[ Y_u^{A_i}Y_v^{A_j} \|\mathcal{X}_\tau\right]I_{\tau=m}=\E\left[ Y_u^{A_i}Y_v^{A_j} \|\mathcal{X}_m\right]I_{\tau=m}=\E\left[ Y_u^{A_i}Y_v^{A_j} \|X_u^{A_i},X_v^{A_i}\right]I_{\tau=m},$$ 
where we used the "irrelevance of independent information" property of conditional expectation in the last equality. Furthermore 
$$\E\left[ Y_u^{A_i}Y_v^{A_j} \|X_u^{A_i},X_v^{A_j}\right]=\phi_1\left(X_u^{A_i},X_v^{A_j}\right)$$ 
where
\begin{align*}
\phi_1\left(x^{a_i}_u,x^{a_j}_v\right)&=
\E\left[ Y_u^{A_i}Y_v^{A_j} \|X^{A_i}_u=x^{a_i}_u,X^{A_j}_v=x^{a_j}_v\right]
\\
&=
\int y_1y_2\frac{f_{Y^{A_i}_u,Y^{A_j}_v,X^{A_i}_u,X^{A_j}_v}(y_1,y_2,x^{a_i}_u,x^{a_j}_v)}{f_{X^{A_i}_u,X^{A_j}_v}(y_1,y_2,x^{a_i}_u,x^{a_j}_v)}dy_1dy_2 
\\
&=\frac{1}{f_{X^{A_i}}(x^{a_i}_u)f_{X^{A_j}}(x^{a_j}_v)}\int yf_{Y^{A_i},X^{A_i}}(y,x^{a_i}_u)dy\int yf_{Y^{A_j},X^{A_j}}(y,x^{a_j}_v)dy 
\\
&=\E\left[ Y_u^{A_i} \|X^{A_i}_u=x^{a_i}_u\right]\E\left[Y_v^{A_j} \|X^{A_j}_v=x^{a_j}_v\right]
=\E\left[ Y^{A_i} \|X^{A_i}=x^{a_i}_u\right]\E\left[Y^{A_j} \|X^{A_j}=x^{a_j}_v\right]
\end{align*}
hence 
\begin{align*}
\E\left[ Y_u^{A_i}Y_v^{A_j} \|\mathcal{X}_\tau\right]
&=\sum_{m=1}^\infty\E\left[ Y_u^{A_i}Y_v^{A_j} \|\mathcal{X}_\tau\right]I_{\tau=m}=\sum_{m=1}^\infty\E\left[ Y_u^{A_i}Y_v^{A_j} \|\mathcal{X}_m\right]I_{\tau=m}
\\
&=\sum_{m=1}^\infty\E\left[ Y^{A_i} \|X^{A_i}=X^{A_i}_u\right]\E\left[Y^{A_j} \|X^{A_j}=X^{A_j}_v\right] I_{\tau=m}
\\
&=
\E\left[ Y^{A_i} \|X^{A_i}=X^{A_i}_u\right]\E\left[Y^{A_j} \|X^{A_j}=X^{A_j}_v\right].
\end{align*}
When $u=v$ and $i=j$, 
$$\E\left[ (Y_u^{A_i})^2 \|\mathcal{X}_m\right]=\phi_2\left(X_u^{A_i}\right)$$ 
where
\begin{align*}
\phi_2\left(x_u^{a_i}\right)&=
\E\left[ (Y_u^{A_i})^2 \|X_u^{A_i}=x_u^{a_i}\right]=
\frac{1}{f_{X^{A_i}}(x^{a_i}_u)^2}\int y^2f_{Y^{A_i},X^{A_i}}(y,x^{a_i}_u)dy
\\
&=\E\left[ (Y^{A_i})^2 \|X^{A_i}=x_u^{a_i}\right].
\end{align*}
Therefore, by analogous computations to the previous case we have that when $u=v$ and $i=j$,
$$\E\left[ (Y_u^{A_i})^2 \|\mathcal{X}_\tau\right]=\E\left[ (Y^{A_i})^2 \|X^{A_i}=X_u^{A_i}\right]. $$
If we denote 
$$\tilde{Y}_i=\left(\E\left[ Y^{A_i}\|X^{A_i}=X_1^{A_i}\right],...,\E\left[ Y^{A_i}\|X^{A_i}=X_{\textbf{n}_{\tau(i)}}^{A_i}\right]\right)^T$$
then
$$ \tilde{Y}_i M^{(i,j)}\tilde{Y}_j
=
\sum_{u=1}^{\textbf{n}_\tau(i)}\sum_{v=1}^{\textbf{n}_\tau(j)}M^{(i,j)}_{uv}\E\left[ Y^{A_i} \|X^{A_i}=X^{A_i}_u\right]\E\left[Y^{A_j} \|X^{A_j}=X^{A_j}_v\right].$$
So when $i\not=j$
$$
\E\left[(\mathbb{Y}^{A_i})^T\mathbb{X}^{A_i}\left(\left(\hat{G}_++\gamma\hat{G}_\Delta \right)^{-1}\right)^T\left(\hat{G}_++\gamma\hat{G}_\Delta \right)^{-1}  (\mathbb{X}^{A_j})^T\mathbb{Y}^{A_j}\|\mathcal{X}_\tau\right]= \tilde{Y}_i M^{(i,j)}\tilde{Y}_j
$$
and when $i=j$
\begin{align*}
&\E\left[(\mathbb{Y}^{A_i})^T\mathbb{X}^{A_i}\left(\left(\hat{G}_++\gamma\hat{G}_\Delta \right)^{-1}\right)^T\left(\hat{G}_++\gamma\hat{G}_\Delta \right)^{-1}  (\mathbb{X}^{A_i})^T\mathbb{Y}^{A_i}\|\mathcal{X}_\tau\right]=\tilde{Y}_i M^{(i,i)}\tilde{Y}_i
\\
&+\sum_{u=1}^{\textbf{n}_\tau(i)}M^{(i,i)}_{uu}\left(\E\left[ (Y^{A_i})^2 \|X^{A_i}=X_u^{A_i}\right]-\E\left[ Y^{A_i} \|X^{A_i}=X_u^{A_i}\right]^2\right)
\\
&=\tilde{Y}_i M^{(i,i)}\tilde{Y}_i+\sum_{u=1}^{\textbf{n}_\tau(i)}M^{(i,i)}_{uu}V\left(Y^{A_i}|X^{A_i}=X^{A_i}_u\right).
\end{align*}
Thus
\begin{align*}
\E\left[ \hat{\beta}_\gamma^T(\textbf{n}_\tau)\hat{\beta}_\gamma(\textbf{n}_\tau)\|\mathcal{X}_\tau\right]
&=\sum_{i,j=1, i\not=j}^k\frac{1}{\textbf{n}_\tau(i)\textbf{n}_\tau(j)}(1+\gamma)^2w_iw_j(\tilde{Y}_i)^TM^{(i,j)}\tilde{Y}_j
\\
&+\sum_{i=1}^k\frac{1}{\textbf{n}_\tau(i)^2}(1+\gamma)^2w_i^2\sum_{u=1}^{\textbf{n}_\tau(i)}M^{(i,i)}_{uu}V\left(Y^{A_i}|X^{A_i}=X^{A_i}_u\right)
\\
&+
\sum_{i=1}^k\frac{1}{\textbf{n}_\tau(i)^2}(1+\gamma)^2w_i^2\tilde{Y}_i M^{(i,i)}\tilde{Y}_i
\\
&+\frac{(1-\gamma)^2}{\textbf{n}_\tau(0)^2}\sum_{u=1}^{\textbf{n}_\tau(i)}M^{(0,0)}_{uu}V\left(Y^{A_0}|X^{A_0}=X^{A_0}_u\right)
+
\frac{(1-\gamma)^2}{\textbf{n}_\tau(0)^2}\tilde{Y}_0 M^{(0,0)}\tilde{Y}_0
\\
&+
2\sum_{i=1}^k\frac{1}{\textbf{n}_\tau(i)\textbf{n}_\tau(0)}(1+\gamma)w_i(1-\gamma )(\tilde{Y}_i)^TM^{(i,0)}\tilde{Y}_0
\end{align*}
For the second term of the variance expression,
\small
\begin{align*}
&\E\left[ \hat{\beta}_\gamma(\textbf{n}_\tau(i))\|\mathcal{X}_\tau\right]^T\E\left[ \hat{\beta}_\gamma(\textbf{n}_\tau(i))\|\mathcal{X}_\tau\right]
\\&=\E\left[ \left(\hat{Z}_{+}^T+\gamma\hat{Z}_{\Delta}^T\right)\left(\left(\hat{G}_++\gamma\hat{G}_\Delta \right)^{-1}\right)^T\|\mathcal{X}_\tau\right] 		\E\left[ \left(\hat{G}_++\gamma\hat{G}_\Delta \right)^{-1}\left(\hat{Z}_{+}+\gamma\hat{Z}_{\Delta}\right)\|\mathcal{X}_\tau\right]
\\
&=\sum_{i,j=1}^k\frac{(1+\gamma)w_i(1+\gamma)w_j}{\textbf{n}_\tau(i)\textbf{n}_\tau(j)}
\E\left[ (\mathbb{Y}^{A_i})^T\mathbb{X}^{A_i} \left(\left(\hat{G}_++\gamma\hat{G}_\Delta \right)^{-1}\right)^T\|\mathcal{X}_\tau\right] 
\E\left[ \left(\hat{G}_++\gamma\hat{G}_\Delta \right)^{-1}(\mathbb{X}^{A_j})^T\mathbb{Y}^{A_j}\|\mathcal{X}_\tau\right]
\\
&+2\sum_{i=1}^k\frac{(1+\gamma)w_i(1-\gamma)}{\textbf{n}_\tau(i)\textbf{n}_\tau(0)}
\E\left[ (\mathbb{Y}^{A_i})^T\mathbb{X}^{A_i} \left(\left(\hat{G}_++\gamma\hat{G}_\Delta \right)^{-1}\right)^T\|\mathcal{X}_\tau\right] 
\E\left[ \left(\hat{G}_++\gamma\hat{G}_\Delta \right)^{-1}(\mathbb{X}^{A_0})^T\mathbb{Y}^{A_0}\|\mathcal{X}_\tau\right]
\\
&+\frac{(1-\gamma)^2}{\textbf{n}_\tau(0)\textbf{n}_\tau(0)}
\E\left[ (\mathbb{Y}^{A_0})^T\mathbb{X}^{A_0} \left(\left(\hat{G}_++\gamma\hat{G}_\Delta \right)^{-1}\right)^T\|\mathcal{X}_\tau\right] 
\E\left[ \left(\hat{G}_++\gamma\hat{G}_\Delta \right)^{-1}(\mathbb{X}^{A_0})^T\mathbb{Y}^{A_0}\|\mathcal{X}_\tau\right]
\\
&=\sum_{i,j=1}^k\frac{(1+\gamma)^2w_iw_j}{\textbf{n}_\tau(i)\textbf{n}_\tau(j)} \tilde{Y}_iM^{(i,j)}\tilde{Y}_j+
2\sum_{i=1}^k\frac{(1+\gamma)w_i(1-\gamma)}{\textbf{n}_\tau(i)\textbf{n}_\tau(0)} \tilde{Y}_iM^{(i,0)}\tilde{Y}_0
+
\frac{(1-\gamma)^2}{\textbf{n}_\tau(0)^2} \tilde{Y}_0M^{(0,0)}\tilde{Y}_0.
\end{align*}
\normalsize
We see that all terms in $\E\left[ \hat{\beta}_\gamma(\textbf{n}_\tau(i))\|\mathcal{X}_\tau\right]^T\E\left[ \hat{\beta}_\gamma(\textbf{n}_\tau(i))\|\mathcal{X}_\tau\right]$ cancel terms in \\$\E\left[ \hat{\beta}_\gamma(\textbf{n}_\tau(i))^T\hat{\beta}_\gamma(\textbf{n}_\tau(i))\|\mathcal{X}_\tau\right]$ and we are left with
\begin{align}\label{condvareq}
&Var(\hat{\beta}_\gamma(\textbf{n}_{\tau})\|\mathcal{X}_\tau)\nonumber
\\
&=\sum_{i=1}^k\frac{(1+\gamma)^2w_i^2}{\textbf{n}_\tau(i)^2}\sum_{u=1}^{\textbf{n}_\tau(i)}M^{(i,i)}_{uu}V\left(Y^{A_i}|X^{A_i}=X^{A_i}_u\right)
+
\frac{(1-\gamma)^2}{\textbf{n}_\tau(0)^2}\sum_{u=1}^{\textbf{n}_\tau(0)}M^{(0,0)}_{uu}V\left(Y^{A_0}|X^{A_0}=X^{A_0}_u\right).
\end{align}
To bound the above expression we compute,
\begin{align*}
&\left|\sum_{u=1}^{\textbf{n}_\tau(i)}M^{(i,i)}_{uu}V\left(Y^{A_i}|X^{A_i}=X^{A_i}_u\right)\right|
\\
&=
\left|\sum_{u=1}^{\textbf{n}_\tau(i)}V\left(Y^{A_i}|X^{A_i}=X^{A_i}_u\right)\left(\mathbb{X}^{A_i}\left(\left(\left(\hat{G}_++\gamma \hat{G}_\Delta \right)^{-1}\right)^T\left(\hat{G}_++\gamma \hat{G}_\Delta \right)^{-1}\right)(\mathbb{X}^{A_i})^T\right)_{u,u}\right|
\\
&\le 
\sup_x V\left(Y^{A_i}|X^{A_i}=x\right)\cdot\left| trace \left(\mathbb{X}^{A_i}\left(\left(\hat{G}_++\gamma \hat{G}_\Delta \right)^{-1}\right)^T\left(\hat{G}_++\gamma \hat{G}_\Delta \right)^{-1}(\mathbb{X}^{A_i})^T\right)\right|
\\
&=\sup_x V\left(Y^{A_i}|X^{A_i}=x\right)trace\left((\mathbb{X}^{A_i})^T\mathbb{X}^{A_i}\right)\cdot
trace\left(\left(\left(\hat{G}_++\gamma \hat{G}_\Delta \right)^{-1}\right)^T\left(\hat{G}_++\gamma \hat{G}_\Delta \right)^{-1}\right).
\end{align*}
If we let $\n . \n_F$ denote the Frobenius norm then
\begin{align*}
&trace\left(\left(\left(\hat{G}_++\gamma \hat{G}_\Delta \right)^{-1}\right)^T\left(\hat{G}_++\gamma \hat{G}_\Delta \right)^{-1}\right)
=\n \left(\hat{G}_++\gamma \hat{G}_\Delta \right)^{-1} \n_F
\\
&\le\n \left(\hat{G}_++\gamma \hat{G}_\Delta \right)^{-1} - \left(G_++\gamma G_\Delta \right)^{-1} \n_F +\n  \left(G_++\gamma G_\Delta \right)^{-1}  \n_F
\\
&\le\sqrt{p}\left(\n \left(\hat{G}_++\gamma \hat{G}_\Delta \right)^{-1}-
\left(G_++\gamma G_\Delta \right)^{-1}\n_{\mathit{l}^2}
+
\n
\left(G_++\gamma G_\Delta \right)^{-1}\n_{\mathit{l}^2}
\right) 
\\
&\le\sqrt{p}(1+\delta) \n \left(G_++\gamma G_\Delta\right)^{-1} \n_{\mathit{l}^2}\n G_++\gamma G_\Delta-\left(\hat{G}_+(\textbf{n}_\tau)+\gamma\hat{G}_\Delta(\textbf{n}_\tau)\right)\n_{\mathit{l}^2}
\\
&+
\sqrt{p}\n \left(G_++\gamma G_\Delta\right)^{-1} \n_{\mathit{l}^2}^2
\\
&\le\sqrt{p}\n \left(G_++\gamma G_\Delta\right)^{-1} \n_{\mathit{l}^2}^2\left(1+\delta(1+\delta)\right),
\end{align*}
where we used the definition of the stopping time and the fact that $\n .\n_F \le \sqrt{p}\n .\n_{\mathit{l}^2}$, for $p\times p$ matrices. Since $trace\left((\mathbb{X}^{A_i}(\textbf{n}))^T\mathbb{X}^{A_i}(\textbf{n})\right)=\sum_{l=1}^p\sum_{m=1}^{\textbf{n}_\tau(i)}(X^{A_i}_m(l))^2$ it follows from the law of large numbers that 
$$\frac{1}{n_{A_i}} trace\left((\mathbb{X}^{A_i}(\textbf{n}))^T\mathbb{X}^{A_i}(\textbf{n})\right)\xrightarrow{a.s.}\sum_{l=1}^p\E[X^{A_i}(l)^2],$$ 
as $n_{A_i}\to\infty$, and therefore 
$$\frac{1}{\textbf{n}_{\tau_{\delta,m}}(i)} trace\left((\mathbb{X}^{A_i}(\textbf{n}_{\tau_{\delta,m}}))^T\mathbb{X}^{A_i}(\textbf{n}_{\tau_{\delta,m}})\right)\xrightarrow{a.s.}\sum_{l=1}^p\E[X^{A_i}(l)^2],$$ 
as $m\to\infty$, which implies that $\left\{\frac{1}{\textbf{n}_{\tau_{\delta,m}}(i)} trace\left((\mathbb{X}^{A_i}(\textbf{n}_{\tau_{\delta,m}}))^T\mathbb{X}^{A_i}(\textbf{n}_{\tau_{\delta,m}})\right)\right\}_m$ is bounded in probability. Since
\begin{align*}
&V(\hat{\beta}_\gamma(\textbf{n}_{\tau})\|\mathcal{X}_\tau)
\\
&\le 
\sqrt{p}\n \left(G_++\gamma G_\Delta\right)^{-1} \n_{\mathit{l}^2}^2\left(1+\delta(1+\delta)\right)(1+\gamma)
\sum_{i=0}^k\sup_x V\left(Y^{A_i}|X^{A_i}=x\right)\frac{1}{\textbf{n}_\tau(i)^2}
\sum_{l=1}^p\sum_{m=1}^{\textbf{n}_\tau(i)}(X^{A_i}_m(l))^2,
\end{align*}
the result follows by letting
\footnotesize
$$C_m=\sqrt{p}\n \left(G_++\gamma G_\Delta\right)^{-1} \n_{\mathit{l}^2}^2\left(1+\delta(1+\delta)\right)(1+\gamma)\max_{0\le i\le k}\left(\sup_x V\left(Y^{A_i}|X^{A_i}=x\right)\left(\frac{1}{\textbf{n}_{\tau_{\delta,m}(i)}}\sum_{l=1}^p\sum_{m=1}^{\textbf{n}_{\tau_{\delta,m}(i)}}(X^{A_i}_m(l))^2\right)\right).$$
\normalsize
\end{proof}
\begin{rema}
Note that \eqref{condvareq} gives us a means to directly compute the conditional variance directly from our samples. This could then be used for bootstrapping purposes for choosing the weights.
\end{rema}

We now consider the conditional covariance matrix and bound its determinant. One can view this as related to d-optimality (although this is conditional) and use this optimize the weights by bootstrapping.

\begin{thm}\label{covar}
Let 
$$\mathbb{C}_{\tau_{\delta,m}}=\E\left[ \hat{\beta}(\textbf{n}_{\tau_{\delta,m}})\hat{\beta}_\gamma(\textbf{n}_{\tau_{\delta,m}})^T\|\mathcal{X}_{\tau_{\delta,m}}\right]-\E\left[ \hat{\beta}_\gamma(\textbf{n}_{\tau_{\delta,m}})\|\mathcal{X}_{\tau_{\delta,m}}\right]\E\left[ \hat{\beta}_\gamma(\textbf{n}_{\tau_{\delta,m}})\|\mathcal{X}_{\tau_{\delta,m}}\right]^T$$ 
be the conditional covariance matrix of $\hat{\beta}_\gamma$ at time $\tau_{\delta,m}$ for any $m\in \N$ and $\delta\in(0,1)$. If we suppose that $(Y^{A_i},X^{A_i})$ has a joint density and that $\sup_{x\in \R^p} V\left(Y^{A_i}\|X^{A_i}=x\right)<\infty$ then $\mathbb{C}$ is well-defined and 
$$\left|\det \left(\mathbb{C}_{\tau_{\delta,m}}\right)\right|\le C_m\left(\sum_{i=0}^k\frac{1}{\textbf{n}_{\tau_{\delta,m}}(i)}\right)^p\hspace{3mm}a.s.$$
where $\{C_m\}_m\in \mathcal{O}_P(1)$
and $D^i$ is a $\textbf{n}_{\tau_{\delta,m}}(i)\times\textbf{n}_{\tau_{\delta,m}}(i)$ diagonal matrix with 
$$ D^i_{(u,u)}=V\left(Y^{A_i}\|X^{A_i}=X^{A_i}_u\right), $$
for $1\le u\le \textbf{n}_{\tau_{\delta,m}}(i)$.
\end{thm}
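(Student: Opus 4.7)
The plan is to mirror the structure of the proof of Theorem \ref{aopt}, but carry the full outer product $\hat{\beta}_\gamma\hat{\beta}_\gamma^T$ rather than the inner product $\hat{\beta}_\gamma^T\hat{\beta}_\gamma$. The pivotal observation is that $\mathbb{C}_{\tau_{\delta,m}}$ is symmetric positive semidefinite, so its determinant is controlled by its trace via AM--GM, and the trace is exactly the scalar conditional variance already bounded in Theorem \ref{aopt}.

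For well-definedness, every entry of $\hat{\beta}_\gamma\hat{\beta}_\gamma^T$ is dominated in absolute value by $\n\hat{\beta}_\gamma(\textbf{n}_\tau)\n_{\mathit{l}^2}^2$, which is integrable by Step 1 of the proof of Theorem \ref{aopt}. To obtain the explicit form, I would expand $\hat{\beta}_\gamma\hat{\beta}_\gamma^T$ as an outer-product double sum over environments with coefficients $c_i/\textbf{n}_\tau(i)$ (where $c_i=(1+\gamma)w_i$ for $1\le i\le k$ and $c_0=1-\gamma$), and take conditional expectations entry by entry. Using the same factorization argument that was central in the proof of Theorem \ref{aopt}, whenever $(i,u)\neq(j,v)$ one has
$$\E\left[Y^{A_i}_uY^{A_j}_v\|\mathcal{X}_\tau\right]=\E\left[Y^{A_i}\|X^{A_i}=X^{A_i}_u\right]\E\left[Y^{A_j}\|X^{A_j}=X^{A_j}_v\right].$$
These factorizing terms reassemble into $\E[\hat{\beta}_\gamma\|\mathcal{X}_\tau]\E[\hat{\beta}_\gamma\|\mathcal{X}_\tau]^T$ and cancel the corresponding terms in $\E[\hat{\beta}_\gamma\hat{\beta}_\gamma^T\|\mathcal{X}_\tau]$. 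Only the diagonal $(i=j,\,u=v)$ contribution survives, producing the conditional-variance factors that collect into $D^i$:
$$\mathbb{C}_{\tau_{\delta,m}}=\sum_{i=0}^k\frac{c_i^2}{\textbf{n}_{\tau_{\delta,m}}(i)^2}\left(\hat{G}_++\gamma\hat{G}_\Delta\right)^{-1}(\mathbb{X}^{A_i})^T D^i \mathbb{X}^{A_i}\left(\left(\hat{G}_++\gamma\hat{G}_\Delta\right)^{-1}\right)^T,$$
which is manifestly symmetric positive semidefinite.

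The determinant bound is then quick. Letting $\lambda_1,\ldots,\lambda_p\ge 0$ denote the eigenvalues of $\mathbb{C}_{\tau_{\delta,m}}$, AM--GM gives
$$\left|\det\left(\mathbb{C}_{\tau_{\delta,m}}\right)\right|=\prod_{l=1}^p\lambda_l\le\left(\frac{1}{p}\sum_{l=1}^p\lambda_l\right)^p=\left(\frac{trace\left(\mathbb{C}_{\tau_{\delta,m}}\right)}{p}\right)^p.$$
But $trace\left(\mathbb{C}_{\tau_{\delta,m}}\right)$ is precisely the scalar $Var\left(\hat{\beta}_\gamma(\textbf{n}_{\tau_{\delta,m}})\|\mathcal{X}_{\tau_{\delta,m}}\right)$, which Theorem \ref{aopt} bounds a.s.~by $C_m\sum_{i=0}^k 1/\textbf{n}_{\tau_{\delta,m}}(i)$ with $\{C_m\}_m\in O_P(1)$. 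Raising this to the $p$-th power and absorbing $p^{-p}$ into a new constant (still in $O_P(1)$) yields the claimed bound.

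The main obstacle is the matrix book-keeping in the explicit-expression step: unlike the scalar inner-product case in Theorem \ref{aopt}, we must carefully track transposes to verify that the corresponding matrix-valued terms in $\E[\hat{\beta}_\gamma\hat{\beta}_\gamma^T\|\mathcal{X}_\tau]$ and $\E[\hat{\beta}_\gamma\|\mathcal{X}_\tau]\E[\hat{\beta}_\gamma\|\mathcal{X}_\tau]^T$ genuinely cancel, rather than only their traces canceling. Once the PSD closed form is in hand, the passage from trace to determinant is immediate.
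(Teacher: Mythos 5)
Your proposal is correct, and the well-definedness and explicit-form steps coincide with the paper's: the paper likewise expands the outer product over environments, uses the factorization $\E[\mathbb{Y}^{A_i}(\mathbb{Y}^{A_j})^T\|\mathcal{X}_\tau]=\tilde{Y}_i\tilde{Y}_j^T$ for $i\ne j$ and $\tilde{Y}_i\tilde{Y}_i^T+D^i$ for $i=j$, and arrives at the same closed form $\sum_i \tfrac{c_i^2}{\textbf{n}_\tau(i)^2}(\hat{G}_++\gamma\hat{G}_\Delta)^{-1}(\mathbb{X}^{A_i})^TD^i\mathbb{X}^{A_i}((\hat{G}_++\gamma\hat{G}_\Delta)^{-1})^T$ (your bookkeeping of the environment coefficients is in fact the more consistent one). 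Where you genuinely diverge is the determinant step. The paper applies the Hadamard inequality column by column, passes to the maximal column-sum norm, and then re-runs a submultiplicativity-plus-law-of-large-numbers argument to show the resulting factors are stochastically bounded. You instead observe that $\mathbb{C}_{\tau_{\delta,m}}$ is a.s.\ symmetric positive semidefinite (immediate either from the closed form or from its being a conditional covariance), apply AM--GM to the eigenvalues to get $|\det(\mathbb{C}_{\tau_{\delta,m}})|\le (p^{-1}\mathrm{trace}(\mathbb{C}_{\tau_{\delta,m}}))^p$, and identify the trace with the scalar conditional variance already bounded in Theorem \ref{aopt}. This is shorter and avoids duplicating the stochastic-boundedness analysis, at the price of being tied to the PSD structure; the paper's Hadamard route is norm-agnostic and would survive if the matrix lost positive semidefiniteness, but here it buys nothing extra. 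Both yield a bound of the stated form $C_m(\sum_i \textbf{n}_{\tau_{\delta,m}}(i)^{-1})^p$ with $\{C_m\}_m\in O_P(1)$, differing only in the deterministic constant absorbed into $C_m$.
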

\begin{proof}
We first explicitly compute $\mathbb{C}_{\tau_{\delta,m}}$. As in the proof of Theorem \ref{aopt} we let ${\tau}=\tau_{\delta,m}$ and drop most of the indexation (where there should be no risk of confusion) going forward. For the first term of $\mathbb{C}_{\tau_{\delta,m}}$,
\small
\begin{align*}
&\E\left[ \hat{\beta}_\gamma\hat{\beta}_\gamma^T\|\mathcal{X}_{\tau}\right]   =      \E\left[\left(\hat{G}_++\gamma\hat{G}_\Delta \right)^{-1} \left(\hat{Z}_{+}-\gamma\hat{Z}_{w}\right)\left(\hat{Z}_{+}^T+\gamma\hat{Z}_{\Delta}^T\right)\left(\left(\hat{G}_++\gamma\hat{G}_\Delta \right)^{-1}\right)^T|\mathcal{X}_{\tau}\right]    
\\
=&\left(\hat{G}_++\gamma\hat{G}_\Delta \right)^{-1}\E\left[ \sum_{i=0}^k\frac{2}{\textbf{n}_\tau(i)}(1+\gamma)w_i(\mathbb{X}^{A_i})^T\mathbb{Y}^{A_i}	\sum_{i=0}^k\frac{2}{\textbf{n}_\tau(i)}(1+\gamma)w_i(\mathbb{Y}^{A_i})^T\mathbb{X}^{A_i}\|\mathcal{X}_{\tau}\right]\left(\left(\hat{G}_++\gamma\hat{G}_\Delta \right)^{-1}\right)^T  
\\
=&\left(\hat{G}_++\gamma\hat{G}_\Delta \right)^{-1}\sum_{i,j=1}^k\frac{4}{\textbf{n}_\tau(i)\textbf{n}_\tau(j)}(1+\gamma)w_i(1+\gamma)w_j(\mathbb{X}^{A_i})^T\E\left[\mathbb{Y}^{A_i} (\mathbb{Y}^{A_j})^T \|\mathcal{X}_{\tau} \right]\mathbb{X}^{A_j}\left(\left(\hat{G}_++\gamma\hat{G}_\Delta \right)^{-1}\right)^T  
\end{align*}
\normalsize
Using the same notation is in the proof of Theorem \ref{aopt}, then for $i\not=j$, $\E\left[\mathbb{Y}^{A_i} (\mathbb{Y}^{A_j})^T \|\mathcal{X}_{\tau} \right]=\tilde{Y}_i(\tilde{Y}_j)^T$ while if $i=j$ then analogous calculations as in that proof yields,
$$\E\left[\mathbb{Y}^{A_i} (\mathbb{Y}^{A_i})^T \|\mathcal{X}_{\tau} \right]=\tilde{Y}_i(\tilde{Y}_i)^T+D^i.$$
\\
For the second term of $\mathbb{C}_{\tau}$,
\small
\begin{align*}
&\E\left[ \hat{\beta}_\gamma\|\mathcal{X}_{\tau}\right]^T\E\left[ \hat{\beta}_\gamma\|\mathcal{X}_{\tau}\right]=\E\left[ \left(\hat{G}_++\gamma\hat{G}_\Delta \right)^{-1}\left(\hat{Z}_{+}+\gamma\hat{Z}_{\Delta}\right)\|\mathcal{X}_{\tau}\right]\E\left[ \left(\hat{Z}_{+}^T+\gamma\hat{Z}_{\Delta}^T\right)\left(\left(\hat{G}_++\gamma\hat{G}_\Delta \right)^{-1}\right)^T\|\mathcal{X}_{\tau}\right] 		
\\
&=\sum_{i,j=1}^k\frac{4(1+\gamma)w_i(1+\gamma)w_j}{\textbf{n}_\tau(i)\textbf{n}_\tau(j)}\E\left[ \left(\hat{G}_++\gamma\hat{G}_\Delta \right)^{-1}(\mathbb{X}^{A_i})^T\mathbb{Y}^{A_i}\|\mathcal{X}_{\tau}\right]
\E\left[ \left(\mathbb{Y}^{A_j}\right)^T\mathbb{X}^{A_j} \left(\left(\hat{G}_++\gamma\hat{G}_\Delta \right)^{-1}\right)^T\|\mathcal{X}_{\tau}\right] 		
\\
&=\sum_{i,j=1}^k\frac{4(1+\gamma)w_i(1+\gamma)w_j}{\textbf{n}_\tau(i)\textbf{n}_\tau(j)}\left(\hat{G}_++\gamma\hat{G}_\Delta \right)^{-1}(\mathbb{X}^{A_i})^T\tilde{Y}_i\tilde{Y}_j^T\mathbb{X}^{A_j}\left(\left(\hat{G}_++\gamma\hat{G}_\Delta \right)^{-1}\right)^T.
\end{align*}
\normalsize
We then readily see that this leads to
\begin{align*}
&\mathbb{C}_{\tau}=\sum_{i=0}^k\frac{4(1+\gamma)w_i^2}{\textbf{n}_\tau(i)^2}\left(\hat{G}_++\gamma\hat{G}_\Delta \right)^{-1}(\mathbb{X}^{A_i})^TD^i\mathbb{X}^{A_i}\left(\left(\hat{G}_++\gamma\hat{G}_\Delta \right)^{-1}\right)^T.
\end{align*}
By the Hadamard inequality
$$\left|\det\left(\mathbb{C}_{\tau}\right)\right|\le \prod_{c=1}^p \n (\mathbb{C}_{\tau})_c \n_{\mathit{l}^2}, $$
where $(\mathbb{C}_{\tau})_c$ denotes column number $c$ of $\mathbb{C}_{\tau}$. Since $\n . \n_{\mathit{l}^2} \le \n .\n_{\mathit{l}^1}$,
$$ \prod_{c=1}^p \n (\mathbb{C}_{\tau})_c \n_{\mathit{l}^2}\le \prod_{c=1}^p \n(\mathbb{C}_{\tau})_c \n_{\mathit{l}^1} \le\left(\max_{1\le c\le p}\n (\mathbb{C}_{\tau})_c \n_{\mathit{l}^1}\right)^p
=
\n \mathbb{C}_{\tau}\n_{1}^p, $$
where $\n . \n_{1}$ denotes the usual maximal column sum-norm for matrices. Next, using just the triangle inequality and sub-multiplicative property of the matrix norm,
\begin{align*}
&\n \mathbb{C}_{\tau} \n_{1}\le  
\sum_{i=0}^k\frac{4(1+\gamma)w_i^2}{\textbf{n}_\tau(i)^2} \n \left(\hat{G}_++\gamma\hat{G}_\Delta \right)^{-1}(\mathbb{X}^{A_i})^TD^i\mathbb{X}^{A_i}\left(\left(\hat{G}_++\gamma\hat{G}_\Delta \right)^{-1}\right)^T \n_{1}
\\
&\le\sum_{i=0}^k\frac{4(1+\gamma)w_i^2}{\textbf{n}_\tau(i)^2} \n \left(\hat{G}_++\gamma\hat{G}_\Delta \right)^{-1}\n_{1}^2 \n (\mathbb{X}^{A_i})^TD^i\mathbb{X}^{A_i} \n_1
\end{align*}
The entries of the matrix $M^i(\textbf{n}_{\tau})=\frac{1}{\textbf{n}_{\tau}(i)}(\mathbb{X}^{A_i}(\textbf{n}_{\tau}))^TD^i(\textbf{n}_{\tau})\mathbb{X}^{A_i}(\textbf{n}_{\tau})$ are given by 
$$M^i_{(r,s)}(\textbf{n}_{\tau})=\frac{1}{\textbf{n}_{\tau}(i)}\sum_{u=1}^{\textbf{n}_{\tau}}X^{A_i}_u(r)X^{A_i}_u(s)Var\left(Y^{A_i}\| X^{A_i}=X^{A_i}_u\right)$$
and since $\{X^{A_i}_u(r)X^{A_i}_u(s)D_{(u,u)}\}_u$ are i.i.d. for every $r$ and $s$, it follows from the law of large numbers (recall that $\tau$ is short-hand notation for $\tau$) that
$$\lim_{m\to\infty}M^i_{(r,s)}(\textbf{n}_{\tau})=\E\left[X^{A_i}(r)X^{A_i}(s)Var\left(Y^{A_i}\| X^{A_i}\right)\right] \hspace{3mm} a.s..$$
This implies that $\{\n M^i (\textbf{n}_{\tau})\n_1\}_m$ is stochastically bounded. We have,
\begin{align*}
&\left|\det\left(\mathbb{C}_{\tau}\right)\right| 
\le
\left(\sum_{i=0}^k\frac{4(1+\gamma)w_i^2}{\textbf{n}_\tau(i)} \n \left(\hat{G}_++\gamma\hat{G}_\Delta \right)^{-1}\n_{1}^2 \n M^i(\textbf{n}_{\tau}) \n_1\right)^p
\le
\\
&\left(4(1+\gamma)\right)^p\left(\sum_{i=0}^k\frac{1}{\textbf{n}_\tau(i)} \left(\n \left(\hat{G}_++\gamma\hat{G}_\Delta -\left(G_++\gamma G_\Delta\right)\right)^{-1}\n_{1}+\n \left(G_++\gamma G_\Delta \right)^{-1}\n_{1} \right)^2 \n M^i(\textbf{n}_{\tau}) \n_1\right)^p.
\end{align*}
By the law of large numbers $\n \left(\hat{G}_++\gamma\hat{G}_\Delta -\left(G_++\gamma G_\Delta\right)\right)^{-1}\n_{1}$ is bounded in probability so by letting
$$C_m=\left(4(1+\gamma)\right)^p\max_{1\le i\le k} \left(\n \left(\hat{G}_++\gamma\hat{G}_\Delta -\left(G_++\gamma G_\Delta\right)\right)^{-1}\n_{1}+\n \left(G_++\gamma G_\Delta \right)^{-1}\n_{1} \right)^{2p} \n M^i(\textbf{n}_{\tau}) \n_1^p,$$
the result follows.
\end{proof}

\section{Discussion}
\label{sec:conclusion} 
We have seen how in a multiple environment setting the SEM of \eqref{SEMA} and \eqref{SEMO} is connected to the worst risk decomposition seen in Proposition \ref{suppropfixedw} (analogously to the case when there is only one shifted environment and an observational environment and a linear SEM). As seen this worst risk decomposition is quite universal. We then defined the risk minimizer as the unique $\arg\min$ solution for this worst risk. We proved several types of bounds for the plug-in estimator of this minimizer and the plug-in estimator for the worst risk. Further study of the implications for practical applications of the quite general SEM proposed here would certainly be interesting.

\section{Additional proofs}
\subsection{Proof of part 2 and 3 of Theorem 1}
We state the following elementary facts as a lemma to ease our exposition for the proof of Theorem1.
\begin{lemma}
Given two square matrices $C_1,C_2$ of the same dimension and any matrix norm $\n .\n$ we have that if $C_1$ is invertible and $\n C_1-C_2 \n< \frac{1}{\n C_1^{-1}\n}$ then $C_2$ is also invertible. Moreover, in this case
\begin{align}\label{invineq}
\n C_1^{-1}-C_2^{-1} \n\le \n C_1^{-1} \n \frac{\n I-C_1^{-1}C_2 \n}{1-\n I-C_1^{-1}C_2 \n}
\end{align}
\end{lemma}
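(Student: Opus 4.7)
The statement is the classical Banach--Neumann perturbation bound for matrix inversion, so the plan is to build everything from a single Neumann series expansion. First I would set $E:=I-C_1^{-1}C_2=C_1^{-1}(C_1-C_2)$. By sub-multiplicativity and the hypothesis,
\bn \n E\n\le \n C_1^{-1}\n\n C_1-C_2\n<1, \en
which is the only quantitative input the argument really uses.

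Next I would deduce invertibility of $C_2$ by the standard Neumann series: because $\n E\n<1$, the series $\sum_{k=0}^\infty E^k$ converges in operator norm to a bounded inverse of $I-E$, with $\n (I-E)^{-1}\n\le 1/(1-\n E\n)$. Since $C_1^{-1}C_2=I-E$, one has $C_2=C_1(I-E)$, so $C_2$ is invertible with $C_2^{-1}=(I-E)^{-1}C_1^{-1}$.

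For the quantitative bound, I would then write
\bn C_1^{-1}-C_2^{-1}=\bigl(I-(I-E)^{-1}\bigr)C_1^{-1}=-E(I-E)^{-1}C_1^{-1}, \en
where the second equality follows from $(I-E)^{-1}=I+E(I-E)^{-1}$ (a one-step rewriting of the Neumann series). Taking norms, using sub-multiplicativity and the Neumann estimate on $\n (I-E)^{-1}\n$ gives
\bn \n C_1^{-1}-C_2^{-1}\n\le \frac{\n E\n}{1-\n E\n}\,\n C_1^{-1}\n, \en
which is exactly the claimed inequality once $\n E\n$ is replaced by $\n I-C_1^{-1}C_2\n$.

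There is no real obstacle here; the only point that needs a line of care is the rewriting $I-(I-E)^{-1}=-E(I-E)^{-1}$ (equivalently, multiply through by $I-E$ on the right to verify), and the observation that the two identifications $E=I-C_1^{-1}C_2$ and $E=C_1^{-1}(C_1-C_2)$ coincide, so that the hypothesis $\n C_1-C_2\n<\n C_1^{-1}\n^{-1}$ is precisely what drives the Neumann series.
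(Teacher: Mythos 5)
Your proof is correct, and it is essentially the same argument as the paper's: both hinge on the single quantitative fact that $E:=I-C_1^{-1}C_2=C_1^{-1}(C_1-C_2)$ satisfies $\Vert E\Vert\le\Vert C_1^{-1}\Vert\,\Vert C_1-C_2\Vert<1$. The only difference is in the finishing step. The paper never expands a series: it writes $C_1^{-1}-C_2^{-1}=(I-C_2^{-1}C_1)C_1^{-1}$ and uses the identity $I-C_2^{-1}C_1=(I-C_1^{-1}C_2)(I-C_2^{-1}C_1)-(I-C_1^{-1}C_2)$, so that taking norms yields the implicit inequality $u\le\Vert E\Vert\,u+\Vert E\Vert$ for $u=\Vert I-C_2^{-1}C_1\Vert$, which is solved to give $u\le\Vert E\Vert/(1-\Vert E\Vert)$. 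Your prefactor $-E(I-E)^{-1}$ is exactly the matrix $I-C_2^{-1}C_1$ (since $(I-E)^{-1}=C_2^{-1}C_1$), so the two decompositions coincide; you simply bound its norm by summing the Neumann series rather than by the paper's self-referential identity. Your route has the minor advantage of making the invertibility of $C_2$ fully explicit via $C_2=C_1(I-E)$ and the convergence of $\sum_{k\ge 0}E^k$, a point the paper's one-line proof leaves implicit.
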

\begin{proof} The proof easily follows by noting that $C_1^{-1}-C_2^{-1}=(I-C_2^{-1}C_1)C_1^{-1}$ and 
$$I-C_2^{-1}C_1=(I-C_1^{-1}C_2)(I-C_2^{-1}C_1)-(I-C_1^{-1}C_2).$$
\end{proof}
\begin{proof}[Proof of the second and third claim of Theorem 1]
We start with the second claim of the theorem.
\\
\textbf{Step 1}: Locally linearise the inverse in the estimator\\
Note first that 
\begin{align*}
&\n I-\left(G_++\gamma G_\Delta\right)^{-1}\left(\mathbb{G}_+(\textbf{n})+\gamma\mathbb{G}_\Delta(\textbf{n})\right)\n_{\mathit{l}^2}= \n \left(G_++\gamma G_\Delta\right)^{-1}\left( G_++\gamma G_\Delta-\left(\mathbb{G}_+(\textbf{n})+\gamma\mathbb{G}_\Delta(\textbf{n})\right)\right)\n_{\mathit{l}^2}\le 
\\
&\n \left(G_++\gamma G_\Delta\right)^{-1} \n_{\mathit{l}^2}\n G_++\gamma G_\Delta-\left(\mathbb{G}_+(\textbf{n})+\gamma\mathbb{G}_\Delta(\textbf{n})\right)\n_{\mathit{l}^2},
\end{align*}
where we used the sub-multiplicative property of the (induced) matrix norm in the last step.
By combining \eqref{invineq}, the above inequality as well as the inequality $\frac{x}{1-x}\le (1+\delta)x$ valid for $0\le x\le\frac{\delta}{1+\delta}$ and any $1>\delta>0$ we get that
\begin{align}\label{Ginvineq}
&\n \left(G_++\gamma G_\Delta\right)^{-1}-\left(\mathbb{G}_+(\textbf{n})+\gamma\mathbb{G}_\Delta(\textbf{n})\right)^{-1} \n_{\mathit{l}^2}
\le\nonumber
\\
&(1+\delta)\n \left(G_++\gamma G_\Delta\right)^{-1} \n_{\mathit{l}^2}\n I-\left(G_++\gamma G_\Delta\right)^{-1}\left(\hat{G}_+(\textbf{n})+\gamma\hat{G}_\Delta(\textbf{n})\right)\n_{\mathit{l}^2}
\le\nonumber
\\
&(1+\delta) \n \left(G_++\gamma G_\Delta\right)^{-1} \n_{\mathit{l}^2}^2\n G_++\gamma G_\Delta-\left(\hat{G}_+(\textbf{n})+\gamma\hat{G}_\Delta(\textbf{n})\right)\n_{\mathit{l}^2},
\end{align}
whenever $\n G_++\gamma G_\Delta-\left(\hat{G}_+(\textbf{n})+\gamma\hat{G}_\Delta(\textbf{n})\right)\n_{\mathit{l}^2}< \delta\n \left(G_++\gamma G_\Delta\right)^{-1}\n_{\mathit{l}^2}$ where we again used the sub-multiplicative property of the matrix norm in the second inequality. Let 
$$D_{\textbf{n}}(\delta)=\left\{\n G_++\gamma G_\Delta-\left(\hat{G}_+(\textbf{n})+\gamma\hat{G}_\Delta(\textbf{n})\right)\n_{\mathit{l}^2}
<
\delta\n \left(G_++\gamma G_\Delta\right)^{-1}\n_{\mathit{l}^2} \right\},$$
then as we noted earlier $\hat{\beta}_\gamma(\textbf{n})=\tilde{\beta}_\gamma(\textbf{n})$ on $D_{\textbf{n}}(\delta)$. Moreover, utilizing inequality \eqref{Ginvineq}, on the subset $D_\textbf{n}(\delta)$ we obtain,

\begin{align}\label{localexpansion}
&\n \tilde{\beta}_\gamma(\textbf{n}) -  \beta_\gamma\n_{\mathit{l}^2}
=
\n \hat{\beta}_\gamma(\textbf{n}) -  \left(G_++\gamma G_\Delta\right)^{-1}\left( Z_++\gamma Z_\Delta \right)\n_{\mathit{l}^2}
 \nonumber     
\\
&\le\n \left(\hat{G}_+(\textbf{n})+\gamma\hat{G}_\Delta(\textbf{n})\right)^{-1}\left( \left( Z_++\gamma Z_\Delta \right)-\left(\hat{Z}_+(\textbf{n})+\gamma\hat{Z}_\Delta(\textbf{n})\right)\right)  \n_{\mathit{l}^2} \nonumber
\\
&+\n \left(\left(\hat{G}_+(\textbf{n})+\gamma\hat{G}_\Delta(\textbf{n})\right)^{-1}- \left(G_++\gamma G_\Delta\right)^{-1}\right) \left( Z_++\gamma Z_\Delta \right)\n_{\mathit{l}^2}\nonumber
\\
&\le\n \left(\hat{G}_+(\textbf{n})+\gamma\hat{G}_\Delta(\textbf{n})\right)^{-1} \n_{\mathit{l}^2} \n \left( Z_++\gamma Z_\Delta \right)-\left(\hat{Z}_+(\textbf{n})+\gamma\hat{Z}_\Delta(\textbf{n})\right)\n_{\mathit{l}^2}\nonumber
\\
&+\n \left( Z_++\gamma Z_\Delta \right)\n_{\mathit{l}^2} \n \left(\hat{G}_+(\textbf{n})+\gamma\hat{G}_\Delta(\textbf{n})\right)^{-1}- \left(G_++\gamma G_\Delta\right)^{-1}\n_{\mathit{l}^2} \nonumber
\\
&\le\left(\n \left(\hat{G}_+(\textbf{n})+\gamma\hat{G}_\Delta(\textbf{n})\right)^{-1} 
-
\left(G_++\gamma G_\Delta\right)^{-1} \n_{\mathit{l}^2}+\n\left(G_++\gamma G_\Delta\right)^{-1} \n_{\mathit{l}^2} \right)
\n \left( Z_++\gamma Z_\Delta \right)-\left(\hat{Z}_+(\textbf{n})+\gamma\hat{Z}_\Delta(\textbf{n})\right)\n_{\mathit{l}^2}    
\nonumber
\\
&+(1+\delta)\n \left( Z_++\gamma Z_\Delta \right)\n_{\mathit{l}^2}
\n \left(G_++\gamma G_\Delta\right)^{-1} \n_{\mathit{l}^2}^2
\n G_++\gamma G_\Delta-\left(\hat{G}_+(\textbf{n})+\gamma\hat{G}_\Delta(\textbf{n})\right)\n_{\mathit{l}^2}\nonumber
\\
&\le\n\left(G_++\gamma G_\Delta\right)^{-1} \n_{\mathit{l}^2}
\n \left( Z_++\gamma Z_\Delta \right)-\left(\hat{Z}_+(\textbf{n})+\gamma\hat{Z}_\Delta(\textbf{n})\right)\n_{\mathit{l}^2}\nonumber
\\
&+(1+\delta)
\n \left(G_++\gamma G_\Delta\right)^{-1} \n_{\mathit{l}^2}^2
\n G_++\gamma G_\Delta-\left(\hat{G}_+(\textbf{n})+\gamma\hat{G}_\Delta(\textbf{n})\right)\n_{\mathit{l}^2}
\big(\n\left( Z_++\gamma Z_\Delta \right)\n_{\mathit{l}^2}
\nonumber
\\
&+
\n \left( Z_++\gamma Z_\Delta \right)-\left(\hat{Z}_+(\textbf{n})+\gamma\hat{Z}_\Delta(\textbf{n})\right)\n_{\mathit{l}^2}\big).
\end{align}
Thus, by taking the measure of the left-most and right-most sides above,
\begin{align}\label{used}
&\P\left(\n \tilde{\beta}_\gamma(\textbf{n}) - \beta_\gamma\n_{\mathit{l}^2}\ge c\right)
\le\P\left(\left\{\n \tilde{\beta}_\gamma(\textbf{n}) -  \left(G_++\gamma G_\Delta\right)^{-1}\left( Z_++\gamma Z_\Delta \right)\n_{\mathit{l}^2}\ge c\right\}\cap D_{\textbf{n}}(\delta) \right)
+
\P\left(D_{\textbf{n}}(\delta)^c\right)\nonumber
\\
&\le\P\left(D_{\textbf{n}}(\delta)^c\right)+\P\left(\n\left(G_++\gamma G_\Delta\right)^{-1} \n_{\mathit{l}^2}
\n \left( Z_++\gamma Z_\Delta \right)-\left(\hat{Z}_+(\textbf{n})+\gamma\hat{Z}_\Delta(\textbf{n})\right)\n_{\mathit{l}^2}\ge c/3 \right)\nonumber
\\
&+\P\left((1+\delta)
\n \left(G_++\gamma G_\Delta\right)^{-1} \n_{\mathit{l}^2}^2
\n G_++\gamma G_\Delta-\left(\hat{G}_+(\textbf{n})+\gamma\hat{G}_\Delta(\textbf{n})\right)\n_{\mathit{l}^2}
\n\left( Z_++\gamma Z_\Delta \right)\n_{\mathit{l}^2}\ge c/3\right)\nonumber
\\
&+\P\left((1+\delta)
\left\{\n \left(G_++\gamma G_\Delta\right)^{-1} \n_{\mathit{l}^2}^2
\n G_++\gamma G_\Delta-\left(\hat{G}_+(\textbf{n})+\gamma\hat{G}_\Delta(\textbf{n})\right)\n_{\mathit{l}^2}\n \left( Z_++\gamma Z_\Delta \right)-\left(\hat{Z}_+(\textbf{n})+\gamma\hat{Z}_\Delta(\textbf{n})\right)\n_{\mathit{l}^2}
\ge c/3\right\}
\right.\nonumber
\\
&\left.\bigcap D_\textbf{n}(\delta)\right).
\end{align}
\textbf{Step 2}: Split the Grammian matrices into independent increments with zero mean\\
For each $i\in \{1,...,k\}$ and $j=1,...,n_{A_i}$ we define the following matrices entry-wise for $u,v\in\{1,...,p\}$ by
$$W(j)_{u,v}^{n_{A_i},A_i}=\frac{(1+\gamma) w_i^2}{n_{A_i}}\mathbb{X}^{n_{A_i},A_i}_{u,j}\mathbb{X}^{n_{A_i},A_i}_{v,j}-\frac{1}{n_{A_i}}\E\left[(1+\gamma) w_i^2(X^{A_i})^TX^{A_i}\right]_{u,v},$$
and for $i=0$,
$$W(j)_{u,v}^{n_{A_i},A_i}=\frac{(1-\gamma)}{n_{A_i}}\mathbb{X}^{n_{A_i},A_i}_{u,j}\mathbb{X}^{n_{A_i},A_i}_{v,j}-\frac{1}{n_{A_i}}\E\left[(1-\gamma) (X^{A_i})^TX^{A_i}\right]_{u,v}.$$
Then $\{W(j)^{n_{A_i},A_i}\}_{j=1}^{n_{A_i}}$ are i.i.d., $\E\left[W(j)^{n_{A_i}}\right]=0$ and 
$$\sum_{i=0}^k\sum_{j=1}^{n_{A_i}}W(j)^{n_{A_i},A_i}=\hat{G}_+(\textbf{n})+\gamma\hat{G}_\Delta(\textbf{n})- G_++\gamma G_\Delta.$$ 
Let $\textbf{R}=(R_1,...,R_k)$ be a vector of length $k$ with positive real entries. 
\\
\textbf{Step 3}: Split the above increments into bounded parts and tail parts
\\
If we now also define 
$$W^{A_i}_{R_i}(j)_{u,v}=W(j)_{u,v}^{n_{A_i},A_i}1_{|\mathbb{X}^{n_{A_i},A_i}_{u,j}|\vee|\mathbb{X}^{n_{A_i},A_i}_{v,j}|\le R_i}$$ and 
$$E_\textbf{R}=\bigcap_{i=0}^k\bigcap_{j=1}^{n_{A_i}}\left\{W^{A_i}_{R_i}(j)_{u,v}=W(j)_{u,v}^{n_{A_i},A_i},\forall u,v\in \{1,...,p\}\right\}$$
then $E_\textbf{R}=\bigcap_{i=0}^k\bigcap_{u=1}^p\bigcap_{j=1}^{n_{A_i}}\left\{|\mathbb{X}^{n_{A_i},A_i}_{u,j}|\le R_i,\right\}$ and $\P\left(E_\textbf{R}\right)=\prod_{i=0}^k\left(F_{\left|X^{A_i}(1)\right|,...,\left|X^{A_i}(p)\right|}(R_i,...,R_i)\right)^{n_{A_i}}$. We also define the set $E_i'(R_i)=\left\{|X^{A_i}(u)|\le R_i,1\le u \le p\right\}$. We shall use the following Jensen-type inequality,
\begin{align}\label{jensen}
&\n \E\left[ X \right] \n_{\mathit{l}^2} 
=
\sup_{\n x \n_{\mathit{l}^2}=1} \n \E\left[ X x\right] \n_{\mathit{l}^2}
=
\sup_{\n x \n_{\mathit{l}^2}=1}\sup_{\n y \n_{\mathit{l}^2}=1}\left| <\E\left[Xx\right],y> \right|=\nonumber
\\
&\sup_{\n x \n_{\mathit{l}^2}=1}\sup_{\n y \n_{\mathit{l}^2}=1}\left| \E\left[<Xx,y>\right] \right|
\le
\sup_{\n x \n_{\mathit{l}^2}=1}\sup_{\n y \n_{\mathit{l}^2}=1}\E\left[\left|<Xx,y>\right|\right]
\le 
\E\left[\sup_{\n x \n_{\mathit{l}^2}=1}\sup_{\n y \n_{\mathit{l}^2}=1}\left|<Xx,y>\right|\right]=\nonumber
\\
&\E\left[\sup_{\n x \n_{\mathit{l}^2}=1} \n Xx\n_{\mathit{l}^2} \right]
=
\E\left[\n X\n_{\mathit{l}^2}\right].
\end{align}
On the set $E_{\textbf{R}}$,
\begin{align*}
&\n \hat{G}_+(\textbf{n})+\gamma\hat{G}_\Delta(\textbf{n})- G_++\gamma G_\Delta \n_{\mathit{l}^2} =
\n \sum_{i=0}^k\sum_{j=1}^n W^{A_i}_{R_i}(j)\n_{\mathit{l}^2}\le 
\sum_{i=0}^k \n \sum_{j=1}^{n_{A_i}} W^{A_i}_{R_i}(j) \n_{\mathit{l}^2} \le
\\
&\n \sum_{j=1}^{n_{A_0}} W^{A_0}_R(j)\n_{\mathit{l}^2}     +|1-\gamma|\n\E\left[(X^{A_0})^TX^{A_0}1_{(E_0')^c}\right]\n_{\mathit{l}^2}+
\sum_{i=1}^k \n \sum_{j=1}^{n_{A_i}} W^{A_i}_R(j)\n_{\mathit{l}^2} +\sum_{i=1}^k(1+\gamma )w_i^2\n\E\left[(X^{A_i})^TX^{A_i}1_{(E_i')^c}\right]\n_{\mathit{l}^2}
\le
\\
&\sum_{i=0}^k \n \sum_{j=1}^{n_{A_i}} W^{A_i}_R(j)\n_{\mathit{l}^2} +
\sum_{i=0}^k(1+\gamma )\n\E\left[(X^{A_i})^TX^{A_i}1_{(E_i')^c}\right]\n_{\mathit{l}^2}
\le
\sum_{i=0}^k \n \sum_{j=1}^{n_{A_i}} W^{A_i}_R(j)\n_{\mathit{l}^2} +
\sum_{i=0}^k(1+\gamma )\E\left[\n(X^{A_i})^TX^{A_i}1_{(E_i')^c}\n_{\mathit{l}^2} \right],
\end{align*}
where we used the \eqref{jensen} in the final step. 
\\
\textbf{Step 4}: Apply the matrix Hoeffding inequality to the bounded parts of the increments\\
Recall that the maximum eigenvalue for a symmetric matrix is equivalent to the induced $\mathit{l}^2$-norm of that matrix. Moreover, an upper bound for the maximum eigenvalue of $W^{A_i}_R(j)$ is given by its maximum absolute row sum, which is bounded above by $\frac{pR_i^2}{n_{A_i}}$. If we let $M_i=\frac{pR_i^2}{n_{A_i}}I_{p\times p}$ then obviously its minimal eigenvalue is an upper bound for $W^{A_i}_{R_i}(j)$ for all $j$ and thus $W^{A_i}_{R_i}(j)\preccurlyeq M_i$ which implies $W^{A_i}_{R_i}(j)^2\preccurlyeq A_i^2$. Since $ \n M_i^2 \n=\frac{p^2R_i^4}{n_{A_i}^2}$ and therefore $\n \sum_{j=1}^{n_{A_i}} M_i^2\n =\sum_{j=1}^{n_{A_i}} \n M_i^2\n=\frac{p^2R_i^4}{n_{A_i}}$ (regardless of the chosen matrix norm), it follows from the matrix Hoeffding inequality that
\begin{align*}
&\P\left( \n \sum_{j=1}^{n_{A_i}} W^{A_i}_R(j)\n_{\mathit{l}^2} \ge t\right)\le 
\P\left( \lambda_{\max}\left(\sum_{j=1}^n W^{A_i}_R(j)\right) \ge t\right)\le pe^{-\frac{t^2}{ 2p^2R_i^4}n_{A_i}}.
\end{align*}
\textbf{Step 5}: Sum up with the tail parts of the increments\\
Let $h_i(R_i)=\left(\sum_{u=1}^p\sqrt{\E\left[ |X^{A_i}(u)|^21_{(E_i'(R_i))^c}\right]}\right)^2$. Note that by the Cauchy-Schwartz inequality,
\begin{align*}
&\E\left[\n(X^{A_i})^TX^{A_i}1_{(E_i')^c}\n_{\mathit{l}^2}\right]\le \E\left[\n(X^{A_i})^TX^{A_i}1_{(E_i')^c}\n_{\mathit{l}^1}\right]\le
\sum_{u=1}^p\sum_{v=1}^p\E\left[ |X^{A_i}(u)||X^{A_i}(v)|1_{(E_i')^c}\right]
\\
&\le\sum_{u=1}^p\sum_{v=1}^p\sqrt{\E\left[ |X^{A_i}(u)|^21_{(E_i')^c}\right]}\sqrt{\E\left[ |X^{A_i}(v)|^21_{(E_i')^c}\right]}
=\left(\sum_{u=1}^p\sqrt{\E\left[ |X^{A_i}(u)|^21_{(E_i')^c}\right]}\right)^2=h_i(R_i).
\end{align*}
Using the above notation,
\begin{align}\label{Gdiff}
&\P\left( \n \hat{G}_+(\textbf{n})+\gamma\hat{G}_\Delta(\textbf{n})- G_++\gamma G_\Delta \n_{\mathit{l}^2}\ge c' \right)\le \P\left( \left\{\n \hat{G}_+(\textbf{n})+\gamma\hat{G}_\Delta(\textbf{n})- G_++\gamma G_\Delta \n_{\mathit{l}^2}\ge c'\right\}\cap E_{\textbf{R}} \right)+\P\left(E_{\textbf{R}}^c\right)\le\nonumber
\\
&\P\left( \sum_{i=0}^k \left(\n \sum_{j=1}^{n_{A_i}} W^{A_i}_R(j)\n_{\mathit{l}^2}
+
(1+\gamma )\E\left[\n(X^{A_i})^TX^{A_i}1_{(E_i')^c}\n_{\mathit{l}^2}\right]\right)
\ge c'  \right)+\P\left(E_{\textbf{R}}^c\right)
\le\nonumber
\\
&\P\left(\sum_{i=0}^k \n \sum_{j=1}^{n_{A_i}} W^{A_i}_R(j)\n_{\mathit{l}^2}
\ge c'-(1+\gamma)h_i(R_i)  \right)+\P\left(E_{\textbf{R}}^c\right)
\le\nonumber
\\
&\sum_{i=0}^k\P\left(  \n \sum_{j=1}^{n_{A_i}} W^{A_i}_R(j)\n_{\mathit{l}^2} 
\ge 
\frac{c'-(1+\gamma )h_i(R_i)}{k+1} \right)+\P\left(E_{\textbf{R}}^c\right)\le
\P\left( \gamma_{\max}\left(\sum_{j=1}^{n_{A_0}} W^{A_0}_R(j)\right) \ge \frac{c'-(1+\gamma)h_i(R_i)}{(k+1)}\right)+\nonumber
\\
&\sum_{i=1}^k\P\left( \lambda_{\max}\left(\sum_{j=1}^{n_{A_i}} W^{A_i}_R(j)\right) \ge \frac{c'-(1+\gamma)h_i(R_i)}{k+1}\right)
+
\P\left(E_{\textbf{R}}^c\right)
\le\nonumber
\\ 
&p\sum_{i=0}^ke^{-\frac{\left(\left(c'-(1+\gamma)h_i(\textbf{R})\right)_+\right)^2}{2p^2(k+1)^2 R_i^4}n_{A_i}}+1-\prod_{i=0}^k\left(F_{\left|X^{A_i}(1)\right|,...,\left|X^{A_i}(p)\right|}(R_i,...,R_i)\right)^{n_{A_i}}.
\end{align}
If we now choose $R_i=n_{A_i}^\alpha$ for some $0<\alpha<\frac 14$ then
\begin{align}\label{PDnc}
&\P\left( \n \hat{G}_+(\textbf{n})+\gamma\hat{G}_\Delta(\textbf{n})- G_++\gamma G_\Delta \n_{\mathit{l}^2}\ge c' \right) 
\le  
p\sum_{i=0}^ke^{-\frac{\left(\left(c'-(1+\gamma)h_i(n_{A_i}^\alpha)\right)_+\right)^2}{ 2p^2(k+1)^2}n_{A_i}^{1-4\alpha}}\nonumber
\\
&+1-\prod_{i=0}^k\left(F_{\left|X^{A_i}(1)\right|,...,\left|X^{A_i}(p)\right|}(n_{A_i}^\alpha,...,n_{A_i}^\alpha)\right)^{n_{A_i}}.
\end{align}
\textbf{Step 6}: Compute the corresponding bounds for the $Z$-part of the estimator\\
Similarly to $W$, for each $i\in \{1,...,k\}$, $j=1,...,n$ and $u\in\{1,...,p\}$ we define the vectors with entries
$$V(j)_{u}^{n_{A_i},A_i}=\frac{(1+\gamma )w_i^2}{n_{A_i}}\mathbb{X}^{n_{A_i},A_i}_{u,j}\mathbb{Y}^{n_{A_i},A_i}_{j}-\frac{1}{n_{A_i}}\E\left[(1+\gamma )w_i^2(X^{A_i})^TY^{A_i}\right]_{u},$$
and
$$V(j)_{u}^{n_{A_0},A_0}=\frac{(1-\gamma)}{n_{A_0}}\mathbb{X}^{n_{A_0},A_0}_{u,j}\mathbb{Y}^{n_{A_0},A_0}_{j}-\frac{1}{n_{A_0}}\E\left[(1-\gamma)(X^{A_0})^TY^{A_0}\right]_{u},$$
for $1\le u\le p$. As before $\{V(j)^{n_{A_i},A_i}\}_{j=1}^n$ are i.i.d., $\E\left[V(j)^{n_{A_i}}\right]=0$ and 
$$\sum_{i=0}^k\sum_{j=1}^nV(j)^{n_{A_i},A_i}=\hat{Z}_+(\textbf{n})+\gamma\hat{Z}_\Delta(\textbf{n})- \left( Z_++\gamma Z_\Delta \right).$$ 
We define $V^{A_i}_{R_i}(j)_{u}=V(j)_{u}^{n_{A_i},A_i}1_{|\mathbb{X}^{n_{A_i},A_i}_{u,j}|\vee|\mathbb{Y}^{n_{A_i},A_i}_{j}|\le R_i}=V(j)_{u}^{n_{A_i},A_i}1_{|\mathbb{X}^{n_{A_i},A_i}_{u,j}|\le R_i}1_{|\mathbb{Y}^{n_{A_i},A_i}_{j}|\le R_i} $ and the set 
$$H_\textbf{R}=\bigcap_{i=0}^k\bigcap_{j=1}^{n_{A_i}}\left\{V^{A_i}_{R_i}(j)_{u}=V(j)_{u}^{n_{A_i},A_i},\forall u\in \{1,...,p\}\right\},$$
and 
$$\P\left(H_\textbf{R}\right)=\prod_{i=0}^k\left(F_{\left|X^{A_i}(1)\right|,...,\left|X^{A_i}(p)\right|,|Y^{A_i}|}(R_i,...,R_i)\right)^{n_{A_i}},$$
with $p+1$ entries of $R_i$. Let $H_i'=\left\{|X^{A_i}(u)|\le R_i,1\le u \le p,|Y^{A_i}|\le R_i\right\}$ On the set $H_\textbf{R}$,
\begin{align*}
&\n \hat{Z}_+(\textbf{n})+\gamma\hat{Z}_\Delta(\textbf{n})- \left( Z_++\gamma Z_\Delta \right) \n_{\mathit{l}^2}\le 
\sum_{i=0}^k \n \sum_{j=1}^{n_{A_i}} V^{A_i}_{R_i}(j)\n_{\mathit{l}^2} +
\sum_{i=0}^k(1+\gamma )w_i^2\E\left[\n X^{A_i}Y^{A_i}1_{(H_i')^c}\n_{\mathit{l}^2}\right].
\end{align*}
Since $\n .\n_{\mathit{l}^2} \le \n .\n_{\mathit{l}^1}$
\begin{align*}
&
\sum_{i=0}^k \n \sum_{j=1}^{n_{A_i}} V^{A_i}_{R_i}(j)\n_{\mathit{l}^2} 
\le
\sum_{i=0}^k\sum_{u=1}^p \left| \sum_{j=1}^{n_{A_i}} V^{A_i}_{R_i}(j)_u\right |.
\end{align*}
By the (scalar) Hoeffding inequality,
\begin{align*}
&\P\left(
\sum_{i=0}^k \n \sum_{j=1}^{n_{A_i}} V^{A_i}_{R_i}(j)\n_{\mathit{l}^2} \ge c' \right)
\le
\sum_{i=0}^k\sum_{u=1}^p \P\left(\left| \sum_{j=1}^{n_{A_i}} V^{A_i}_{R_i}(j)_u\right| \ge \frac{c'}{(k+1)p}\right)\le
\sum_{i=0}^k 2p\exp\left( -\frac{(c')^2}{2R_i^4(k+1)^2p^2}n_{A_i}\right).
\end{align*}
Let $g_i(R_i)=\sum_{u=1}^p\E\left[ |X^{A_i}(u)||Y^{A_i}|1_{(H_i')^c}\right]$, then
\begin{align*}
&\E\left[\n X^{A_i}(u)Y^{A_i}1_{(H_i')^c}\n_{\mathit{l}^2}\right]
\le
\E\left[\n X^{A_i}Y^{A_i}1_{(H_i')^c}\n_{\mathit{l}^1}\right]=
\sum_{u=1}^p\E\left[ |X^{A_i}(u)||Y^{A_i}|1_{(H_i')^c}\right]=g_i(R_i).
\end{align*}
Now
\begin{align*}
&\P\left( \n \hat{Z}_+(\textbf{n})+\gamma\hat{Z}_\Delta(\textbf{n})- \left( Z_++\gamma Z_\Delta \right) \n_{\mathit{l}^2}\ge c' \right)
\le
\P\left( \left\{\n \hat{Z}_+(\textbf{n})+\gamma\hat{Z}_\Delta(\textbf{n})- \left( Z_++\gamma Z_\Delta \right) \n_{\mathit{l}^2}\ge c'\right\}\cap H_{\textbf{R}} \right)+\P\left(H_{\textbf{R}}^c\right)
\\
&\le\P\left(
\sum_{i=0}^k \left(\n \sum_{j=1}^{n_{A_i}} V^{A_i}_{R_i}(j)\n_{\mathit{l}^2}  + (1+\gamma )\E\left[\n X^{A_i}Y^{A_i}1_{(H_i')^c}\n_{\mathit{l}^2}\right]\right)\ge c'
 \right)
+\P\left(H_{\textbf{R}}^c\right)
\\
&\le\P\left(\sum_{i=0}^k \n\sum_{j=1}^{n_{A_i}} V^{A_i}_{R_i}(j)\n_{\mathit{l}^2} \ge c'-(1+\gamma )g_i(R_i)\right)
+
\P\left(H_{\textbf{R}}^c\right)
\\ 
&\le 2p\sum_{i=0}^ke^{-\frac{\left(\left(c'-(1+\gamma )g_i(R_i)\right)_+\right)^2}{2p^2(k+1)^2 R_i^4}n_{A_i}}
+1-\prod_{i=0}^k\left(F_{\left|X^{A_i}(1)\right|,...,\left|X^{A_i}(p)\right|,|Y^{A_i}|}(R_i,...,R_i)\right)^{n_{A_i}}.
\end{align*}
and with $R_i$ chosen as before, then
\begin{align*}
&\P\left( \n \hat{Z}_+(\textbf{n})+\gamma\hat{Z}_\Delta(\textbf{n})- \left( Z_++\gamma Z_\Delta \right) \n_{\mathit{l}^2}\ge c' \right)
\le
2p\sum_{i=0}^ke^{-\frac{\left(\left(c'-(1+\gamma )g_i(n_{A_i}^\alpha\right)_+\right)^2}{2p^2(k+1)^2}n_{A_i}^{1-4\alpha}}
+
\\
&1-\prod_{i=0}^k\left(F_{\left|X^{A_i}(1)\right|,...,\left|X^{A_i}(p)\right|,|Y^{A_i}|}(n_{A_i}^\alpha,...,n_{A_i}^\alpha)\right)^{n_{A_i}}.
\end{align*}
\\
\textbf{Step 7}: Sum up all the terms\\
We can now estimate
\begin{align*}
&\P(D_{\textbf{n}}(\delta)^c)
=\P\left(\n G_++\gamma G_\Delta-( \hat{G}_+(\textbf{n})+\gamma\hat{G}_\Delta(\textbf{n}))\n_{\mathit{l}^2}\ge \delta\left(G_++\gamma G_\Delta\right)^{-1}\right)
\\
&\le
p\sum_{i=0}^ke^{-\frac{\left(\left(\delta\n \left(G_++\gamma G_\Delta\right)^{-1}\n_{\mathit{l}^2} -(1+\gamma )h_i(n_{A_i}^\alpha)\right)_+\right)^2}{ 2p^2(k+1)^2}n_{A_i}^{1-4\alpha}}
+1-\prod_{i=0}^k\left(F_{\left|X^{A_i}(1)\right|,...,\left|X^{A_i}(p)\right|}(n_{A_i}^\alpha,...,n_{A_i}^\alpha)\right)^{n_{A_i}}
\end{align*}
For the second term
\begin{align}\label{term2}
&\P\left(\n\left(G_++\gamma G_\Delta\right)^{-1} \n_{\mathit{l}^2}
\n \left( Z_++\gamma Z_\Delta \right)-\left(\hat{Z}_+(\textbf{n})+\gamma\hat{Z}_\Delta(\textbf{n})\right)\n_{\mathit{l}^2}\ge c/3 \right)
\le \nonumber
\\
&2p\sum_{i=0}^ke^{-\frac{\left(\frac{c}{3\n\left(G_++\gamma G_\Delta\right)^{-1} \n_{\mathit{l}^2}}- (1+\gamma )g_i(n_{A_i}^\alpha) \right)_+^2}{2p^2(k+1)^2}n_{A_i}^{1-4\alpha}}
+
1-\prod_{i=0}^k\left(F_{\left|X^{A_i}(1)\right|,...,\left|X^{A_i}(p)\right|,|Y^{A_i}|}(n_{A_i}^\alpha,...,n_{A_i}^\alpha)\right)^{n_{A_i}},
\end{align}
For the third term,
\begin{align}\label{term3}
&\P\left((1+\delta)
\n \left(G_++\gamma G_\Delta\right)^{-1} \n_{\mathit{l}^2}^2
\n G_++\gamma G_\Delta-\left(\hat{G}_+(\textbf{n})+\gamma\hat{G}_\Delta(\textbf{n})\right)\n_{\mathit{l}^2}
\n\left( Z_++\gamma Z_\Delta \right)\n_{\mathit{l}^2}\ge c/3\right)\le\nonumber
\\
&    p\sum_{i=0}^ke^{-\frac{\left( \frac{c}{3(1+\delta)
\n \left(G_++\gamma G_\Delta\right)^{-1} \n_{\mathit{l}^2}^2\n\left( Z_++\gamma Z_\Delta \right)\n_{\mathit{l}^2}}  -(1+\gamma )h_i(n_{A_i}^\alpha)\right)_+^2}{(k+1)^2 2p^2}n_{A_i}^{1-4\alpha}}
+1-\prod_{i=0}^k\left(F_{\left|X^{A_i}(1)\right|,...,\left|X^{A_i}(p)\right|}(n_{A_i}^\alpha,...,n_{A_i}^\alpha)\right)^{n_{A_i}}
\end{align}
For the final and fourth term
\begin{align}\label{term4}
&\P\left(\left\{(1+\delta)
\n \left(G_++\gamma G_\Delta\right)^{-1} \n_{\mathit{l}^2}^2
\n G_++\gamma G_\Delta-\left(\hat{G}_+(\textbf{n})+\gamma\hat{G}_\Delta(\textbf{n})\right)\n_{\mathit{l}^2}\n \left( Z_++\gamma Z_\Delta \right)-\left(\hat{Z}_+(\textbf{n})+\gamma\hat{Z}_\Delta(\textbf{n})\right)\n_{\mathit{l}^2}
\ge c/3\right\}\cap D_\textbf{n}(\delta)\right)\nonumber
\\
&\le
\P\left((1+\delta)
\n \left(G_++\gamma G_\Delta\right)^{-1} \n_{\mathit{l}^2}^3\delta
\n \left( Z_++\gamma Z_\Delta \right)-\left(\hat{Z}_+(\textbf{n})+\gamma\hat{Z}_\Delta(\textbf{n})\right)\n_{\mathit{l}^2}
\ge c/3\right)\nonumber
\\
&\le
2p\sum_{i=0}^ke^{-\frac{\left(\frac{c}{3(1+\delta)\n \left(G_++\gamma G_\Delta\right)^{-1}\n_{\mathit{l}^2}^3\delta}   -(1+\gamma )g_i(n_{A_i}^\alpha)\right)_+^2}
{(k+1)^22p^2}n_{A_i}^{1-4\alpha}}
+1-\prod_{i=0}^k\left(F_{\left|X^{A_i}(1)\right|,...,\left|X^{A_i}(p)\right|,|Y^{A_i}|}(n_{A_i}^\alpha,...,n_{A_i}^\alpha)\right)^{n_{A_i}}.
\end{align}
We now collect all the terms and use the fact that $F_{\left|X^{A_i}(1)\right|,...,\left|X^{A_i}(p)\right|}(n_{A_i}^\alpha,...,n_{A_i}^\alpha) 
\ge 
F_{\left|X^{A_i}(1)\right|,...,\left|X^{A_i}(p)\right|,|Y^{A_i}|}(n_{A_i}^\alpha,...,n_{A_i}^\alpha)$ to find that when $n_{A_i}\ge N_{A_i}$, 
\begin{align*}
&\P\left(\n \tilde{\beta}_\gamma(\textbf{n}) -  \beta_\gamma\n_{\mathit{l}^2}\ge c \right)
\\
&\le 6p\sum_{i=0}^ke^{-\frac{\left(\tilde{r}(c)-(1+\gamma)\left(g_i(n_{A_i}^\alpha)\vee h_i(n_{A_i}^\alpha)\right) \right)_+^2}{L} n_{A_i}^{1-4\alpha}}
+4\left(1-\prod_{i=0}^k\left(F_{\left|X^{A_i}(1)\right|,...,\left|X^{A_i}(p)\right|,|Y^{A_i}|}(n_{A_i}^\alpha,...,n_{A_i}^\alpha)\right)^{n_{A_i}}\right)
\end{align*}
where
$$\tilde{r}(c)= \left(\left(\delta\n \left(G_++\gamma G_\Delta\right)^{-1}\n_{\mathit{l}^2}\right)\wedge c\right)\left(\frac{1}{3\n \left(G_++\gamma G_\Delta\right)^{-1}\n_{\mathit{l}^2}(1+\delta)\left(1\vee
\n \left(G_++\gamma G_\Delta\right)^{-1} \n_{\mathit{l}^2}\vee\n\left( Z_++\gamma Z_\Delta \right)\n_{\mathit{l}^2}\right)}\right)$$
and
$$L=2p^2(k+1)^2.$$
We can now let 
\begin{align*}
&N_{A_i}=\min \left\{n\in\N:  (1+\gamma)g_i(n_{A_i}^\alpha)\vee h_i(n_{A_i}^\alpha)<\frac{\tilde{r}(c)}{2}\right\}
\end{align*}
and
note that since both $g$ and $h$ are non-increasing, $(1+\gamma)g_i(n_{A_i}^\alpha)\vee h_i(n_{A_i}^\alpha)<\frac{\tilde{r}(c)}{2}$ for all $n\ge N_{A_i}$. We can now let $r(c)=\frac{\tilde{r}(c)}{\sqrt{2L}}$ and with this definition
\begin{align*}
&\P\left(\n \tilde{\beta}_\gamma(\textbf{n}) -  \beta_\gamma\n_{\mathit{l}^2}\ge c \right)
\le
(4p+2)\sum_{i=0}^ke^{-r(c)^2 n_{A_i}^{1-4\alpha}}
+
5\left(1-\prod_{i=0}^k\left(F_{\left|X^{A_i}(1)\right|,...,\left|X^{A_i}(p)\right|,|Y^{A_i}|}(n_{A_i}^\alpha,...,n_{A_i}^\alpha)\right)^{n_{A_i}}\right),
\end{align*}
for all $n\ge N_{A_i}$.
\end{proof}
\begin{proof}[Proof of the third claim]
We now assume $M(\zeta)<\infty$ for some $\zeta>2$ and let $\sigma(\zeta):= (p+1)M(\zeta)$. This new assumption will make it possible to obtain an explicit bound for all $n_{A_i}\ge 1$. By the Markov inequality we have
\begin{align}\label{Markovg}
\P\left(|X^{A_i}(1)|\vee...\vee|X^{A_i}(p)|\vee|Y^{A_i}|\ge x\right)
&=
\P\left(|X^{A_i}(1)|^\zeta\vee...\vee|X^{A_i}(p)|^\zeta\vee|Y^{A_i}|^\zeta\ge x^\zeta\right)
\nonumber\\
&\le
\P\left(\sum_{u=1}^p|X^{A_i}(u)|^\zeta+|Y^{A_i}|^\zeta\ge x^\zeta\right) 
\nonumber\\
&\le
\frac{1}{x^\zeta}\left(\sum_{u=1}^p\E\left[|X^{A_i}(u)|^\zeta\right]+\E\left[|Y^{A_i}|^\zeta\right]\right)
\le
\frac{(p+1)M(\zeta)}{x^\zeta}.
\end{align}
\eqref{Markovg} now implies,
$$1-F_{|X^{A_i}(1)|,...,|X^{A_i}(p)|,|Y^{A_i}|}(x,...,x)\le \frac{\sigma(\zeta)}{x^\zeta},$$
and trivially also
$$1-F_{|X^{A_i}(1)|,...,|X^{A_i}(p)|}(x,...,x)\le 1-F_{|X^{A_i}(1)|,...,|X^{A_i}(p)|,|Y^{A_i}|}(x,...,x)\le \frac{\sigma(\zeta)}{x^\zeta}.$$
This leads to the following bound of $g_i(n_{A_i}^\alpha)$,
\begin{align}\label{gbound}
g_i(n_{A_i}^\alpha)
&=
\sum_{u=1}^p\E\left[\left|X^{A_i}(u)Y^{A_i}\right|1_{(H_i'(n_{A_i}^\alpha))^c}\right]
\le
\sum_{u=1}^p\E\left[\left|X^{A_i}(u)Y^{A_i}\right|^{\frac{\zeta}{2}}\right]^{\frac{2}{\zeta}}\P\left((H_i'(n_{A_i}^\alpha))^c\right)^{1-\frac{2}{\zeta}}
\nonumber
\\
&\le
\sum_{u=1}^p\E\left[\left|X^{A_i}(u)\right|^{\zeta}\right]^{\frac{1}{\zeta}}\E\left[\left|Y^{A_i}\right|^{\zeta}\right]^{\frac{1}{\zeta}}\P\left((H_i'(n_{A_i}^\alpha))^c\right)^{1-\frac{2}{\zeta}}\nonumber
\\
&\le
pM(\zeta)^{\frac{2}{\zeta}}\left(1-F_{\left|X^{A_i}(1)\right|,...,\left|X^{A_i}(1)\right|,\left|Y^{A_i}\right|}\left(n_{A_i}^\alpha,...,n_{A_i}^\alpha\right)\right)^{\frac{\zeta-2}{\zeta}}
\le
pM(\zeta)^{\frac{2}{\zeta}}\left(\frac{\sigma(\zeta)}{n_{A_i}^{\alpha\zeta}}\right)^{\frac{\zeta-2}{\zeta}},
\end{align}
where we used the Hölder inequality. We have a similar bound for $h_i(n_{A_i}^\alpha)$
\begin{align}\label{hboundHolder}
h_i(n_{A_i}^\alpha)
&=
\left(\sum_{u=1}^p\sqrt{\E\left[ |X^{A_i}(u)|^21_{(E_i'(n_{A_i}^\alpha))^c}\right]}\right)^2
\le
\left(\sum_{u=1}^p\sqrt{\E\left[ |X^{A_i}(u)|^\zeta\right]^{\frac{2}{\zeta}}\P\left((E_i'(n_{A_i}^\alpha))^c\right)^{1-\frac{2}{\zeta}}}\right)^2\nonumber
\\
&\le
\left(p\sqrt{M(\zeta)^{\frac{2}{\zeta}}\left(1-F_{\left|X^{A_i}(1)\right|,...,\left|X^{A_i}(1)\right|,\left|Y^{A_i}\right|}\left(n_{A_i}^\alpha,...,n_{A_i}^\alpha\right)\right)^{1-\frac{2}{\zeta}}}\right)^2
\le
p^2M(\zeta)^{\frac{2}{\zeta}}\left(\frac{\sigma(\zeta)}{n_{A_i}^{\alpha\zeta}}\right)^{1-\frac{2}{\zeta}}.
\end{align}
Define $N'_{A_i}=\min\left\{n_{A_i}\in\N: (1+\gamma)h_i(n_{A_i}^\alpha)<\frac{\delta\n \left(G_++\gamma G_\Delta\right)^{-1}\n_{\mathit{l}^2}}{2} \right\}$, then \eqref{hboundHolder} implies 
\begin{align}
N_{A_i}'\le \left(\frac{2(1+\gamma)p^2M(\zeta)^{\frac{2}{\zeta}}}{\delta\n \left(G_++\gamma G_\Delta\right)^{-1}\n_{\mathit{l}^2}}\right)^{\frac{1}{\alpha(\zeta-2)}}\sigma(\zeta)^{\frac{\zeta-2}{\zeta}}
\end{align}
With our new assumption we may now compute  new bounds for the four terms in \eqref{used}. For the first term,
\begin{align*}
&\P(D_{\textbf{n}}(\delta)^c)
=\P\left(\n G_++\gamma G_\Delta-( \hat{G}_+(\textbf{n})+\gamma\hat{G}_\Delta(\textbf{n}))\n_{\mathit{l}^2}\ge \delta\left(G_++\gamma G_\Delta\right)^{-1}\right)
\\
&\le
p\sum_{i=0}^ke^{-\frac{\left(\left(\delta\n \left(G_++\gamma G_\Delta\right)^{-1}\n_{\mathit{l}^2} -(1+\gamma )h_i(n_{A_i}^\alpha)\right)_+\right)^2}{ 2p^2(k+1)^2}n_{A_i}^{1-4\alpha}}
+1-\prod_{i=0}^k\left(F_{\left|X^{A_i}(1)\right|,...,\left|X^{A_i}(p)\right|}(n_{A_i}^\alpha,...,n_{A_i}^\alpha)\right)^{n_{A_i}}
\end{align*}
The first term on the right-hand side can be bounded as follows, using the inequality $(x-y)^2\ge \frac{x^2}{2}$, valid for $2y\le x$,
\begin{align}\label{Concpexp01}
&p\sum_{i=0}^ke^{-\frac{\left(\left(\delta\n \left(G_++\gamma G_\Delta\right)^{-1}\n_{\mathit{l}^2} -(1+\gamma )h_i(n_{A_i}^\alpha)\right)_+\right)^2}{2p^2(k+1)^2}n_{A_i}^{1-4\alpha}}
\le
p\sum_{i=0}^k1_{n_{A_i}<N_{A_i}'}+p\sum_{i=0}^k1_{n_{A_i}\ge N_{A_i}'}e^{-\frac{\delta^2\n \left(G_++\gamma G_\Delta\right)^{-1}\n_{\mathit{l}^2}^2}{4p^2(k+1)^2}n_{A_i}^{1-4\alpha}}
\nonumber
\\
&\le
2p\sum_{i=0}^ke^{-\frac{\delta^2\n \left(G_++\gamma G_\Delta\right)^{-1}\n_{\mathit{l}^2}^2}{4p^2(k+1)^2}\left(n_{A_i}^{1-4\alpha}-(N_{A_i}')^{1-4\alpha}\right)},
\end{align}
Recall that for the second term of \eqref{used} we have
\begin{align}\label{2term2}
&\P\left(\n\left(G_++\gamma G_\Delta\right)^{-1} \n_{\mathit{l}^2}
\n \left( Z_++\gamma Z_\Delta \right)-\left(\hat{Z}_+(\textbf{n})+\gamma\hat{Z}_\Delta(\textbf{n})\right)\n_{\mathit{l}^2}\ge c/3 \right)
\le \nonumber
\\
&2p\sum_{i=0}^ke^{-\frac{\left(\frac{c}{3\n\left(G_++\gamma G_\Delta\right)^{-1} \n_{\mathit{l}^2}}- (1+\gamma )g_i(n_{A_i}^\alpha) \right)_+^2}{2p^2(k+1)^2}n_{A_i}^{1-4\alpha}}
+
1-\prod_{i=0}^k\left(F_{\left|X^{A_i}(1)\right|,...,\left|X^{A_i}(p)\right|,|Y^{A_i}|}(n_{A_i}^\alpha,...,n_{A_i}^\alpha)\right)^{n_{A_i}}.
\end{align}
Letting $N''_{A_i}=\min\left\{n_{A_i}\in\N: (1+\gamma)g_i(n_{A_i}^\alpha)<\frac{c}{6\n\left(G_++\gamma G_\Delta\right)^{-1} \n_{\mathit{l}^2}} \right\}$ then due to \eqref{gbound} we get the following bound for $N_{A_i}''$,
\begin{align*}
N''_{A_i}\le \left(\frac{6(1+\gamma)M(\zeta)^{\frac{2}{\zeta}}\n \left(G_++\gamma G_\Delta\right)^{-1}\n_{\mathit{l}^2}}{c}\right)^{\frac{1}{\alpha(\zeta-2)}}\sigma(\zeta)^{\frac{1}{\alpha\zeta}}.
\end{align*}
We get the following bound for the first term on the right-hand side of \eqref{2term2},
\begin{align}\label{second2}
&2p\sum_{i=0}^ke^{-\frac{\left(\frac{c}{3\n\left(G_++\gamma G_\Delta\right)^{-1} \n_{\mathit{l}^2}}- (1+\gamma )g_i(n_{A_i}^\alpha) \right)_+^2}{2p^2(k+1)^2}n_{A_i}^{1-4\alpha}}
\le
4p\sum_{i=0}^ke^{-\frac{\frac{c^2}{9\n\left(G_++\gamma G_\Delta\right)^{-1} \n_{\mathit{l}^2}^2}}{4p^2(k+1)^2}\left(n_{A_i}^{1-4\alpha}-(N_{A_i}'')^{1-4\alpha}\right)},
\end{align}
For the third term of \eqref{used},
\begin{align}\label{term3}
&\P\left((1+\delta)
\n \left(G_++\gamma G_\Delta\right)^{-1} \n_{\mathit{l}^2}^2
\n G_++\gamma G_\Delta-\left(\hat{G}_+(\textbf{n})+\gamma\hat{G}_\Delta(\textbf{n})\right)\n_{\mathit{l}^2}
\n\left( Z_++\gamma Z_\Delta \right)\n_{\mathit{l}^2}\ge c/3\right)\le\nonumber
\\
&    p\sum_{i=0}^ke^{-\frac{\left( \frac{c}{3(1+\delta)
\n \left(G_++\gamma G_\Delta\right)^{-1} \n_{\mathit{l}^2}^2\n\left( Z_++\gamma Z_\Delta \right)\n_{\mathit{l}^2}}  -(1+\gamma )h_i(n_{A_i}^\alpha)\right)_+^2}{(k+1)^2 2p^2}n_{A_i}^{1-4\alpha}}
+1-\prod_{i=0}^k\left(F_{\left|X^{A_i}(1)\right|,...,\left|X^{A_i}(p)\right|}(n_{A_i}^\alpha,...,n_{A_i}^\alpha)\right)^{n_{A_i}}
\end{align}
we may bound the first term on the right-hand side by
\begin{align}\label{second3}
&p\sum_{i=0}^ke^{-\frac{\left( \frac{c}{3(1+\delta)
\n \left(G_++\gamma G_\Delta\right)^{-1} \n_{\mathit{l}^2}^2\n Z_++\gamma Z_\Delta \n_{\mathit{l}^2}}  -(1+\gamma )h_i(n_{A_i}^\alpha)\right)_+^2}{(k+1)^2 2p^2}n_{A_i}^{1-4\alpha}}
\le
2p\sum_{i=0}^ke^{-\frac{\frac{c^2}{9(1+\delta)^2
\n \left(G_++\gamma G_\Delta\right)^{-1} \n_{\mathit{l}^2}^4\n Z_++\gamma Z_\Delta \n_{\mathit{l}^2}^2}}{4p^2(k+1)^2}\left(n_{A_i}^{1-4\alpha}-(N_{A_i}''')^{1-4\alpha}\right)},
\end{align}
where
\begin{align*}
N'''_{A_i}\le \left(\frac{6(1+\gamma)(1+\delta)M(\zeta)^{\frac{2}{\zeta}}\n \left(G_++\gamma G_\Delta\right)^{-1}\n_{\mathit{l}^2}^2\n Z_++\gamma Z_\Delta \n_{\mathit{l}^2}}{c}\right)^{\frac{1}{\alpha(\zeta-2)}}\sigma(\zeta)^{\frac{1}{\alpha\zeta}}.
\end{align*}
For the final and fourth term
\small
\begin{align}\label{term4}
&\P\left(\left\{(1+\delta)
\n \left(G_++\gamma G_\Delta\right)^{-1} \n_{\mathit{l}^2}^2
\n G_++\gamma G_\Delta-\left(\hat{G}_+(\textbf{n})+\gamma\hat{G}_\Delta(\textbf{n})\right)\n_{\mathit{l}^2}\n \left( Z_++\gamma Z_\Delta \right)-\left(\hat{Z}_+(\textbf{n})+\gamma\hat{Z}_\Delta(\textbf{n})\right)\n_{\mathit{l}^2}
\ge c/3\right\}\cap D_\textbf{n}(\delta)\right)\nonumber
\\
&\le
2p\sum_{i=0}^ke^{-\frac{\left(\frac{c}{3(1+\delta)\n \left(G_++\gamma G_\Delta\right)^{-1}\n_{\mathit{l}^2}^3\delta}   -(1+\gamma )g_i(n_{A_i}^\alpha)\right)_+^2}
{(k+1)^22p^2}n_{A_i}^{1-4\alpha}}
+1-\prod_{i=0}^k\left(F_{\left|X^{A_i}(1)\right|,...,\left|X^{A_i}(p)\right|,|Y^{A_i}|}(n_{A_i}^\alpha,...,n_{A_i}^\alpha)\right)^{n_{A_i}}.
\end{align}
\normalsize
we may bound the first term on the right-hand side by
\begin{align}\label{second4}
&2p\sum_{i=0}^ke^{-\frac{\left(\frac{c}{3(1+\delta)\n \left(G_++\gamma G_\Delta\right)^{-1}\n_{\mathit{l}^2}^3\delta}   -(1+\gamma )g_i(n_{A_i}^\alpha)\right)_+^2}
{(k+1)^22p^2}n_{A_i}^{1-4\alpha}}
\le
2p\sum_{i=0}^ke^{-\frac{\frac{c^2}{9(1+\delta)^2
\n \left(G_++\gamma G_\Delta\right)^{-1} \n_{\mathit{l}^2}^6}}{4p^2(k+1)^2}\left(n_{A_i}^{1-4\alpha}-(N_{A_i}''')^{1-4\alpha}\right)},
\end{align}
where 
\begin{align*}
N''''_{A_i}\le \left(\frac{6(1+\gamma)(1+\delta)M(\zeta)^{\frac{2}{\zeta}}\delta\n \left(G_++\gamma G_\Delta\right)^{-1}\n_{\mathit{l}^2}^3}{c}\right)^{\frac{1}{\alpha(\zeta-2)}}\sigma(\zeta)^{\frac{1}{\alpha\zeta}}.
\end{align*}
We now collect all the terms and use the fact that $F_{\left|X^{A_i}(1)\right|,...,\left|X^{A_i}(p)\right|}(n_{A_i}^\alpha,...,n_{A_i}^\alpha) 
\ge 
F_{\left|X^{A_i}(1)\right|,...,\left|X^{A_i}(p)\right|,|Y^{A_i}|}(n_{A_i}^\alpha,...,n_{A_i}^\alpha)$ to find that when $n_{A_i}\ge N_{A_i}$, 
\begin{align}\label{sista}
&\P\left(\n \tilde{\beta}_\gamma(\textbf{n}) -  \beta_\gamma\n_{\mathit{l}^2}\ge c \right)\nonumber
\\
&\le 10p\sum_{i=0}^ke^{\frac{(c\vee \delta)^2\left(\n \left(G_++\gamma G_\Delta\right)^{-1}\n_{\mathit{l}^2}\vee \n G_++\gamma G_\Delta\n_{\mathit{l}^2}\right)^6}{6p^2(k+1)^2\left(1\wedge\n\left( Z_++\gamma Z_\Delta \right)\n_{\mathit{l}^2}\right)^2} (N_{A_i}'\vee N_{A_i}''\vee N_{A_i}'''\vee N_{A_i}'''')^{1-4\alpha}}
\nonumber
\\
&\times e^{-\frac{(c\wedge \delta)^2\left(\n \left(G_++\gamma G_\Delta\right)^{-1}\n_{\mathit{l}^2}\wedge \n G_++\gamma G_\Delta\n_{\mathit{l}^2}\right)^6}{6p^2(k+1)^2(1+\delta)\left(1\vee
\n\left( Z_++\gamma Z_\Delta \right)\n_{\mathit{l}^2}\right)^2} n_{A_i}^{1-4\alpha}}
\nonumber
\\
&+4\left(1-\prod_{i=0}^k\left(F_{\left|X^{A_i}(1)\right|,...,\left|X^{A_i}(p)\right|,|Y^{A_i}|}(n_{A_i}^\alpha,...,n_{A_i}^\alpha)\right)^{n_{A_i}}\right).
\end{align}
By noting that $\n \left(G_++\gamma G_\Delta\right)^{-1}\n_{\mathit{l}^2}^{-1}\le \n G_++\gamma G_\Delta\n_{\mathit{l}^2}$ we arrive at the following bound
\begin{align*}
&N_{A_i}'\vee N_{A_i}''\vee N_{A_i}'''\vee N_{A_i}''''
\\
&\le
  M(\zeta)^{\frac{1}{\alpha(\zeta-2)}}(p+1)^{\frac{1}{\alpha\zeta}}\left((1\vee c^{-1}\vee\delta^{-1})6(1+\gamma)(1+\delta)\left(1\vee \n \left(G_++\gamma G_\Delta\right)^{-1}\n_{\mathit{l}^2}\right)^3\left(1\vee \n Z_++\gamma Z_\Delta \n_{\mathit{l}^2}\right)\right)^{\frac{1}{\alpha(\zeta-2)}},
\end{align*}
which gives us the desired result when plugged back into \eqref{sista}.
\end{proof}
\newpage
\subsection{Proof of Theorem 2}
\begin{proof}
We first split the difference across the different environments,
\begin{align*}
&\P\left( \left|\hat{R}(\textbf{n}) -R\right|\ge c \right)
\le
\sum_{i=0}^k\P\left( \left|\hat{R}_{A_i}(\hat{\beta}_\gamma(\textbf{n})) -R_{A_i}(\beta_\gamma)\right|\ge \frac{c}{2(\gamma+1)(k+1)} \right).
\end{align*}
For each environment we now split across the three main terms for each of the $p$ covariates,
\begin{align}\label{envisplit}
&\P\left( \left|\hat{R}_{A_i}(\hat{\beta}_\gamma(\textbf{n})) -R_{A_i}(\beta_\gamma)\right|\ge \frac{c}{2(\gamma+1)(k+1)} \right)
\le
\P\left( \left|\E\left[(Y^{A_i})^2\right] -\frac{1}{n_{A_i}}\sum_{u=1}^{n_{A_i}}(Y^{A_i}_u)^2\right|\ge \frac{c}{6(\gamma+1)(k+1)} \right)
\nonumber\\
&+
\sum_{l=1}^p\P\left( \left|\beta_\gamma(l)\E\left[Y^{A_i}X^{A_i}(l)\right] -\hat{\beta}_\gamma(l)\frac{1}{n_{A_i}}\sum_{u=1}^{n_{A_i}}Y^{A_i}_uX^{A_i}_u(l)\right|\ge \frac{c}{12(\gamma+1)(k+1)p} \right)
\nonumber\\
&+
\sum_{l=1}^p\P\left( \left|\beta_\gamma(l)^2\E\left[(X^{A_i}(l))^2\right] -\hat{\beta}_\gamma(l)^2\frac{1}{n_{A_i}}\sum_{u=1}^{n_{A_i}}(X^{A_i}_u(l))^2\right|\ge \frac{c}{12(\gamma+1)(k+1)p} \right).
\end{align}
We not handle each of the three terms above separately. Let us first introduce a bit of notation. Let 
$g_{i,l}(R)=\E\left[\left|X^{A_i}(l)Y^{A_i}\right|1_{\left|X^{A_i}(l)Y^{A_i}\right|> R}\right],$ $h_{i,l}(R)=\E\left[\left|X^{A_i}(l)\right|^21_{\left|X^{A_i}(l)\right|^2> R}\right]$ 
and 
$f_{i}(R)=\E\left[\left|Y^{A_i}\right|^21_{\left|Y^{A_i}\right|>R}\right]$.
We split the differences between the tail and bulk part of each term in \eqref{envisplit} and apply the Hoeffding inequality on the bulk part.
\begin{align*}
&\P\left( \left|\E\left[(Y^{A_i})^2\right] -\frac{1}{n_{A_i}}\sum_{u=1}^{n_{A_i}}(Y^{A_i}_u)^2\right|\ge \frac{c}{6(\gamma+1)(k+1)} \right)
\\
&\le
\P\left( \left|\E\left[(Y^{A_i})^21_{(Y^{A_i})^2\le n_{A_i}^{2\alpha}}\right] -\frac{1}{n_{A_i}}\sum_{u=1}^{n_{A_i}}(Y^{A_i}_u)^21_{(Y^{A_i}_u)^2\le n_{A_i}^{2\alpha}}\right|\ge \frac{c}{12(\gamma+1)(k+1)}-\E\left[(Y^{A_i})^21_{(Y^{A_i})^2> n_{A_i}^{2\alpha}}\right] \right)
\\
&+
\P\left( \left|1_{\bigcup_{v=1}^{n_{A_i}}\left\{(Y^{A_i}_v)^2> n_{A_i}^{2\alpha}\right\}}\frac{1}{n_{A_i}}\sum_{u=1}^{n_{A_i}}(Y^{A_i}_u)^2\right|\ge \frac{c}{12(\gamma+1)(k+1)} \right)
\\
&\le
e^{-\frac{2\left(\frac{c}{12(\gamma+1)(k+1)}-\E\left[(Y^{A_i})^21_{(Y^{A_i})^2> n_{A_i}^{2\alpha}}\right]\right)^2}{n_{A_i}^{2\alpha}}n_{A_i}}
+
\P\left(\bigcup_{v=1}^{n_{A_i}}\left\{(Y^{A_i}_v)^2> n_{A_i}^{2\alpha}\right\}\right)
\\
&\le e^{-2\left(\frac{c}{12(\gamma+1)(k+1)}-\E\left[(Y^{A_i})^21_{(Y^{A_i})^2> n_{A_i}^{2\alpha}}\right]\right)^2n_{A_i}^{1-4\alpha}}
+
1-\left(F_{\left|Y^{A_i}\right|}(n_{A_i}^{\alpha})\right)^{n_{A_i}}
\end{align*}
With $n_{A_i}$ so large that $\frac{c}{\n\beta_\gamma\n_{\mathit{l}^2} 48(\gamma+1)(k+1)p}>\E\left[\left|X^{A_i}(l)Y^{A_i}\right|1_{\left|X^{A_i}(l)Y^{A_i}\right|> n_{A_i}^{2\alpha}}\right]$ (recall that we use the convention $1/0=+\infty$) we have that
\footnotesize
\begin{align*}
&\P\left( \left|\beta_\gamma(l)\E\left[Y^{A_i}X^{A_i}(l)\right] -\hat{\beta}_\gamma(l)\frac{1}{n_{A_i}}\sum_{u=1}^{n_{A_i}}Y^{A_i}_uX^{A_i}_u(l)\right|\ge \frac{c}{12(\gamma+1)(k+1)p} \right)
\\
&\le
\P\left( \left|\beta_\gamma(l)-\hat{\beta}_\gamma(l)\right|\left|\frac{1}{n_{A_i}}\sum_{u=1}^{n_{A_i}}Y^{A_i}_uX^{A_i}_u(l)\right|\ge \frac{c}{24(\gamma+1)(k+1)p} \right)
+
\P\left( \left|\beta_\gamma(l)\right|\left|\E\left[Y^{A_i}X^{A_i}(l)\right] -\frac{1}{n_{A_i}}\sum_{u=1}^{n_{A_i}}Y^{A_i}_uX^{A_i}_u(l)\right|\ge \frac{c}{24(\gamma+1)(k+1)p} \right)
\\
&\le
\P\left( \bigcup_{v=1}^{n_{A_i}}\left\{\left|X_v^{A_i}(l)Y^{A_i}_v\right|> n_{A_i}^{2\alpha}\right\} \right)
+
\P\left( \left|\beta_\gamma(l)-\hat{\beta}_\gamma(l)\right|\left|\frac{1}{n_{A_i}}\sum_{u=1}^{n_{A_i}}Y^{A_i}_uX^{A_i}_u(l)1_{\left|X_u^{A_i}(l)Y_u^{A_i}\right|\le n_{A_i}^{2\alpha}}\right|\ge \frac{c}{48(\gamma+1)(k+1)p} \right)
\\
&+
\P\left( \left|\E\left[Y^{A_i}X^{A_i}(l)\right] -\frac{1}{n_{A_i}}\sum_{u=1}^{n_{A_i}}Y^{A_i}_uX^{A_i}_u(l)\right|\ge \frac{c}{\n\beta_\gamma\n_{\mathit{l}^2} 24(\gamma+1)(k+1)p} \right)
\\
&\le
1-\left(F_{\left|X^{A_i}(l)\right|,\left|Y^{A_i}\right|}(n_{A_i}^{\alpha},n_{A_i}^{\alpha})\right)^{n_{A_i}}
+
\P\left( \n\beta_\gamma-\hat{\beta}_\gamma\n_{\mathit{l}^2}\ge \frac{c}{48(\gamma+1)(k+1)pn_{A_i}^{2\alpha}} \right)
\\
&+
\P\left( \left|\E\left[Y^{A_i}X^{A_i}(l)1_{\left|Y^{A_i}X^{A_i}(l)\right|\le n_{A_i}^{2\alpha}}\right] -\frac{1}{n_{A_i}}\sum_{u=1}^{n_{A_i}}Y^{A_i}_uX^{A_i}_u(l)1_{\left|Y^{A_i}X^{A_i}(l)\right|\le n_{A_i}^{2\alpha}}\right|
 \ge\frac{c}{\n\beta_\gamma\n_{\mathit{l}^2} 48(\gamma+1)(k+1)p}-g_i(n_{A_i}^{2\alpha}) \right)
\\
&+
\P\left( \bigcup_{v=1}^{n_{A_i}}\left\{\left|X_v^{A_i}(l)Y^{A_i}_v\right|> n_{A_i}^{2\alpha}\right\} \right)
\le
\P\left( \n\beta_\gamma(l)-\hat{\beta}_\gamma(l)\n_{\mathit{l}^2}\ge\frac{c}{48n_{A_i}^{2\alpha}(\gamma+1)(k+1)p} \right)
+
2\left(1-\left(F_{\left|X^{A_i}(l)\right|,\left|Y^{A_i}\right|}(n_{A_i}^{\alpha},n_{A_i}^{\alpha})\right)^{n_{A_i}}\right)
\\
&+
e^{-\frac{2\left(\frac{c}{\n\beta_\gamma\n_{\mathit{l}^2} 48(\gamma+1)(k+1)p}-g_i(n_{A_i}^{2\alpha})\right)^2}{n_{A_i}^{4\alpha}}n_{A_i}}.
\end{align*}
\normalsize
The final term of \eqref{envisplit} is handled analogously,
\small
\begin{align*}
&\P\left( \left|\beta_\gamma(l)\E\left[(X^{A_i}(l))^2\right] -\hat{\beta}_\gamma(l)^2\frac{1}{n_{A_i}}\sum_{u=1}^{n_{A_i}}(X^{A_i}_u(l))^2\right|\ge \frac{c}{12(\gamma+1)(k+1)p} \right)
\le
\P\left( \n\beta_\gamma(l)-\hat{\beta}_\gamma(l)\n_{\mathit{l}^2}\ge\frac{c}{48n_{A_i}^{2\alpha}(\gamma+1)(k+1)p} \right)
\\
&+
2\left(1-\left(F_{\left|X^{A_i}(l)\right|}(n_{A_i}^{\alpha})\right)^{n_{A_i}}\right)
+
e^{-\frac{2\left(\frac{c}{\n\beta_\gamma\n_{\mathit{l}^2} 48(\gamma+1)(k+1)p}-h_i(n_{A_i}^{2\alpha})\right)^2}{n_{A_i}^{4\alpha}}n_{A_i}}
\end{align*}
\normalsize
This leads to
\small
\begin{align*}
&\P\left( \left|\hat{R}_{A_i}(\hat{\beta}_\gamma(\textbf{n})) -R_{A_i}(\beta_\gamma)\right|\ge \frac{c}{2(\gamma+1)(k+1)} \right)
\le
e^{-\frac{2\left(\frac{c}{12(\gamma+1)(k+1)}-f_i(n_{A_i}^{2\alpha})\right)^2}{n_{A_i}^{4\alpha}}n_{A_i}}
+
1-\left(F_{\left|Y^{A_i}\right|}(n_{A_i}^{\alpha})\right)^{n_{A_i}}
\\
&+
2p\P\left( \n\beta_\gamma-\hat{\beta}_\gamma\n_{\mathit{l}^2}\ge\frac{c}{48n_{A_i}^{2\alpha}(\gamma+1)(k+1)p} \right)
+
2\sum_{l=1}^p\left(1-\left(F_{\left|X^{A_i}(l)\right|,\left|Y^{A_i}\right|}(n_{A_i}^{\alpha},n_{A_i}^{\alpha})\right)^{n_{A_i}}\right)
+
pe^{-\frac{2\left(\frac{c}{\n\beta_\gamma\n_{\mathit{l}^2} 48(\gamma+1)(k+1)p}-g_i(n_{A_i}^{2\alpha})\right)^2}{n_{A_i}^{4\alpha}}n_{A_i}}
\\
&+
pe^{-\frac{2\left(\frac{c}{\n\beta_\gamma\n_{\mathit{l}^2} 48(\gamma+1)(k+1)p}-h_i(n_{A_i}^{2\alpha})\right)^2}{n_{A_i}^{4\alpha}}n_{A_i}}
+
\sum_{l=1}^p\left(1-\left(F_{\left|X^{A_i}(l)\right|}(n_{A_i}^{\alpha})\right)^{n_{A_i}}\right).
\end{align*}
\normalsize
Therefore
\begin{align*}
&\P\left( \left|\hat{R}(\textbf{n}) -R\right|\ge c \right)
\le
\sum_{i=0}^k\P\left( \left|\hat{R}_{A_i}(\hat{\beta}_\gamma(\textbf{n})) -R_{A_i}(\beta_\gamma)\right|\ge \frac{c}{2(\gamma+1)(k+1)} \right)
\\
&\le
(2p+1)(k+1)e^{-\frac{2\left(\frac{c}{\n\beta_\gamma\n_{\mathit{l}^2} 48(\gamma+1)(k+1)p}-h_i(n_{A_i}^{2\alpha})\vee f_i(n_{A_i}^{2\alpha})\vee g_i(n_{A_i}^{2\alpha})\right)^2}{n_{A_i}^{4\alpha}}n_{A_i}}
+
2p(k+1)\P\left( \n\beta_\gamma-\hat{\beta}_\gamma\n_{\mathit{l}^2}\ge\frac{c}{48n_{A_i}^{2\alpha}(\gamma+1)(k+1)p} \right)
\\
&+4(k+1)\sum_{l=1}^p\left(1-\left(F_{\left|X^{A_i}(l)\right|,\left|Y^{A_i}\right|}(n_{A_i}^{\alpha},n_{A_i}^{\alpha})\right)^{n_{A_i}}\right)
\end{align*}
We now let 
$$N_{A_i}'=\inf\{n\in\N: \max_l\left(h_{i,l}(n_{A_i}^\alpha)\vee f_i(n_{A_i}^\alpha)\vee g_{i,l}(n_{A_i}^\alpha)\right)\le \frac{c}{2\n\beta_\gamma\n_{\mathit{l}^2} 48(\gamma+1)(k+1)p} \}\vee N_{A_i},$$
where $N_{A_i}$ is defined as in Theorem 1.1). Then $\max_l\left(h_{i,l}(n_{A_i}^\alpha)\vee f_i(n_{A_i}^\alpha)\vee g_{i,l}(n_{A_i}^\alpha)\right)\le \frac{c}{2\n\beta_\gamma\n_{\mathit{l}^2} 48(\gamma+1)(k+1)p} $ for all $n_{A_i}\ge N_{A_i}'$ due to the monotonicity of $h_{i,l}$, $f_i$ and $g_{i,l}$. We now plug in the (first) concentration inequality from Theorem 1 and we get
\begin{align*}
&\P\left( \left|\hat{R}(\textbf{n}) -R\right|\ge c \right)
\le
(4k+9)\left(1-\prod_{i=0}^k\left(F_{\left|X^{A_i}(1)\right|,...,\left|X^{A_i}(p)\right|,|Y^{A_i}|}(n_{A_i}^\alpha,...,n_{A_i}^\alpha)\right)^{n_{A_i}}\right)
\\
&+2p(k+1)(3+4p)e^{-(\delta\wedge c)\left(E\wedge \frac{c}{\n\beta_\gamma\n_{\mathit{l}^2} 48(\gamma+1)(k+1)p}\right)^2},
\end{align*}
for $n_{A_i}\ge N_{A_i}'$.
\end{proof}
\newpage
\subsection{Proof of Theorem 3}
\begin{proof}[Proof of (2)]
\textbf{Step 1}: Split the estimator into a local part and a tail part.
\\
Let 
\begin{align}\label{delta1}
\delta=\frac 12\wedge\left(\frac{C-(\n \left( G_+ +\gamma G_\Delta\right)^{-1} \n_{\mathit{l}^2}+ \n \left( G_+ +\gamma G_\Delta\right)^{-1} \n_{\mathit{l}^2}^3)}{2\n \left( G_+ +\gamma G_\Delta\right)^{-1} \n_{\mathit{l}^2}^3}\right),
\end{align}
then by this definition and the assumption on $C$, $C>\n \left( G_+ +\gamma G_\Delta\right)^{-1} \n+ (1+\delta)\n \left( G_+ +\gamma G_\Delta\right)^{-1} \n^3$ and $0<\delta<1$.
With $\delta$ defined as above, let $D_{\textbf{n}}(\delta)$ be defined as in the proof of Theorem 1,
$$D_{\textbf{n}}(\delta)=\left\{\n G_++\gamma G_\Delta-\left(\hat{G}_+(\textbf{n})+\gamma\hat{G}_\Delta(\textbf{n})\right)\n_{\mathit{l}^2}
<
\delta\n \left(G_++\gamma G_\Delta\right)^{-1}\n_{\mathit{l}^2} \right\}.$$
For $\omega\in D_{\textbf{n}}(\delta)$, we know that
\begin{align*}
&\n \left(\hat{G}_+(\textbf{n})+\gamma\hat{G}_\Delta(\textbf{n}) \right)^{-1} \n_{\mathit{l}^2}
\le 
\n \left(\hat{G}_+(\textbf{n})+\gamma\hat{G}_\Delta(\textbf{n}) \right)^{-1} - \left(G_++\gamma G_\Delta \right)^{-1} \n_{\mathit{l}^2}
+
\n\left(G_++\gamma G_\Delta \right)^{-1} \n_{\mathit{l}^2}
\le
\\
&(1+\delta)\n\left(G_++\gamma G_\Delta \right)^{-1} \n^2\n \hat{G}_+(\textbf{n})+\gamma\hat{G}_\Delta(\textbf{n}) - \left(G_++\gamma G_\Delta \right) \n+\n\left(G_++\gamma G_\Delta \right)^{-1} \n
\le
\\
&\n \left( G_+ +\gamma G_\Delta\right)^{-1} \n+ (1+\delta)\delta\n \left( G_+ +\gamma G_\Delta\right)^{-1} \n^3< C, 
\end{align*}
where we used the inequality \eqref{Ginvineq} from the proof (in the appendix) of Theorem 1, which is valid on the set $D_\textbf{n}(\delta)$. So if we let $C_\textbf{n}=\left\{ \n \left(\hat{G}_+(\textbf{n})+\gamma\hat{G}_\Delta(\textbf{n}) \right)^{-1} \n \le C  \right\}$ it follows that $D_{\textbf{n}}(\delta) \subseteq C_\textbf{n}$. Due to the Radon inequality 
\begin{align}\label{varbnd11}
&\E\left[\n\hat{\beta}_\gamma(\textbf{n})1_{C_\textbf{n}} -\E\left[\hat{\beta}_\gamma(\textbf{n})1_{C_\textbf{n}} \right] \n_{\mathit{l}^2}^q\right] 
\le
\E\left[\left(\n\hat{\beta}_\gamma(\textbf{n})1_{C_\textbf{n}} -\beta_\gamma \n_{\mathit{l}^2}+ \n \beta_\gamma-\E\left[\hat{\beta}_\gamma(\textbf{n})1_{C_\textbf{n}} \right] \n_{\mathit{l}^2}\right)^q\right] 
\nonumber
\\
&\le
2^{q-1}\E\left[\n\hat{\beta}_\gamma(\textbf{n})1_{C_\textbf{n}} -\beta_\gamma \n_{\mathit{l}^2}^q \right]
+
2^{q-1}\E\left[\n \beta_\gamma-\E\left[\hat{\beta}_\gamma(\textbf{n})1_{C_\textbf{n}} \right] \n_{\mathit{l}^2}^q\right]
\le 
2^q\E\left[\n\hat{\beta}_\gamma(\textbf{n})1_{C_\textbf{n}} -\beta_\gamma \n_{\mathit{l}^2}^q \right]
\nonumber
\\
&\le 
2^q\E\left[\n\hat{\beta}_\gamma(\textbf{n}) -\beta_\gamma \n_{\mathit{l}^2}^q1_{C_\textbf{n}} \right]+2^q\n\beta_\gamma \n_{\mathit{l}^2}^q\P\left(C_\textbf{n}^c\right)
\le 
2^q\E\left[\n\hat{\beta}_\gamma(\textbf{n}) -\beta_\gamma \n_{\mathit{l}^2}^q1_{C_\textbf{n}} \right]+2^q\n\beta_\gamma \n_{\mathit{l}^2}^q\P\left(D_\textbf{n}(\delta)^c\right)
\end{align}
where we used the inequality $\n\E\left[ \textbf{X} \right] \n_{\mathit{l}^2}^q \le \E\left[ \n\textbf{X} \n_{\mathit{l}^2}^q\right],$
for any vector $\textbf{X}$ of $L^q(\P)$ variables, which is just an application of the Jensen inequality, first to the norm $\n.\n_{\mathit{l}^2}$ and then to the map $x^q$. We now note that
\begin{align}\label{cnx11}
&\E\left[\n\hat{\beta}_\gamma(\textbf{n}) -\beta_\gamma \n_{\mathit{l}^2}^q1_{C_\textbf{n}} \right]
=
\E\left[\n\hat{\beta}_\gamma(\textbf{n}) -\beta_\gamma \n_{\mathit{l}^2}^q1_{C_\textbf{n}\cap D_\textbf{n}(\delta)} \right]
+
\E\left[\n\hat{\beta}_\gamma(\textbf{n}) -\beta_\gamma \n_{\mathit{l}^2}^q1_{C_\textbf{n}\cap D_\textbf{n}(\delta)^c} \right]
\nonumber
\\
&\le 
\E\left[\n\hat{\beta}_\gamma(\textbf{n}) -\beta_\gamma \n_{\mathit{l}^2}^q1_{C_\textbf{n}\cap D_\textbf{n}(\delta)} \right]
+
2^{q-1}\E\left[\n\hat{\beta}_\gamma(\textbf{n})\n_{\mathit{l}^2}^q1_{C_\textbf{n}\cap D_\textbf{n}(\delta)^c} \right]
+
2^{q-1}\n\beta_\gamma \n_{\mathit{l}^2}^q \P\left(C_\textbf{n}\cap D_\textbf{n}(\delta)^c\right)
\nonumber
\\
&\le
2^{q-1}\E\left[\n\hat{\beta}_\gamma(\textbf{n}) -\beta_\gamma \n_{\mathit{l}^2}^q1_{C_\textbf{n}\cap D_\textbf{n}(\delta)} \right]
+
2^{q-1}\E\left[\n\hat{\beta}_\gamma(\textbf{n})\n_{\mathit{l}^2}^q1_{C_\textbf{n}\cap D_\textbf{n}(\delta)^c} \right]
+
2^{q-1}\n\beta_\gamma \n_{\mathit{l}^2}^q \P\left(D_\textbf{n}(\delta)^c\right) ,
\end{align}
\textbf{Step 2}: Linearise the local part.
\\
Using the local Lipschitz property of the matrix inverse we have (see inequality \eqref{localexpansion} in the Appendix and note that this calculation is independent of the chosen norm) on $D_\textbf{n}(\delta)$
\begin{align}\label{localexpansion1}
\n \tilde{\beta}_\gamma(\textbf{n}) -  \beta_\gamma\n_{\mathit{l}^2}
\le &    
\n\left(G_++\gamma G_\Delta\right)^{-1} \n_{\mathit{l}^2}
\n \left( Z_++\gamma Z_\Delta \right)-\left(\hat{Z}_+(\textbf{n})+\gamma\hat{Z}_\Delta(\textbf{n})\right)\n_{\mathit{l}^2}\nonumber
\\
&+(1+\delta)
\n \left(G_++\gamma G_\Delta\right)^{-1} \n_{\mathit{l}^2}^2
\n G_++\gamma G_\Delta-\left(\hat{G}_+(\textbf{n})+\gamma\hat{G}_\Delta(\textbf{n})\right)\n_{\mathit{l}^2}
 \nonumber \\
&\times \big(\n\left( Z_++\gamma Z_\Delta \right)\n_{\mathit{l}^2}+\n\left( Z_++\gamma Z_\Delta \right)-\left(\hat{Z}_+(\textbf{n})+\gamma\hat{Z}_\Delta(\textbf{n})\right)\n_{\mathit{l}^2}\big).
\end{align}
Due to \eqref{localexpansion1},
\begin{align}\label{expbound}
&\E\left[\n\hat{\beta}_\gamma(\textbf{n}) -\beta_\gamma  \n_{\mathit{l}^2}^q1_{ D_\textbf{n}(\delta)\cap C_\textbf{n}}\right]
\le
\E\left[\n\hat{\beta}_\gamma(\textbf{n}) -\beta_\gamma  \n_{\mathit{l}^2}^q1_{ D_\textbf{n}(\delta)}\right]
\nonumber
\\
&\le 
2^{q-1}\n\left(G_++\gamma G_\Delta\right)^{-1} \n_{\mathit{l}^2}^{q}
\E\left[\n \left( Z_++\gamma Z_\Delta \right)-\left(\hat{Z}_+(\textbf{n})+\gamma\hat{Z}_\Delta(\textbf{n})\right)\n_{\mathit{l}^2}^{q}
\right]\nonumber
\\
&+2^{q-1}(1+\delta)^{2(q-1)}\n \left(G_++\gamma G_\Delta\right)^{-1} \n_{\mathit{l}^2}^{2q}\E\left[
\n G_++\gamma G_\Delta-\left(\hat{G}_+(\textbf{n})+\gamma\hat{G}_\Delta(\textbf{n})\right)\n_{\mathit{l}^2}^q
\right.\nonumber
\\
&\left. \times \big(\n\left( Z_++\gamma Z_\Delta \right)\n_{\mathit{l}^2}+
\n \left( Z_++\gamma Z_\Delta \right)-\left(\hat{Z}_+(\textbf{n})+\gamma\hat{Z}_\Delta(\textbf{n})\right)\n_{\mathit{l}^2}\big)^q1_{D_\textbf{n}(\delta)}\right]
\nonumber
\\
&\le 2^{q-1}\n\left(G_++\gamma G_\Delta\right)^{-1} \n_{\mathit{l}^2}^q\E\left[
\n \left( Z_++\gamma Z_\Delta \right)-\left(\hat{Z}_+(\textbf{n})+\gamma\hat{Z}_\Delta(\textbf{n})\right)\n_{\mathit{l}^2}^q\right]\nonumber
\\
&+2^{q-1}(1+\delta)^q
\n \left(G_++\gamma G_\Delta\right)^{-1} \n_{\mathit{l}^2}^{2q}\left(\n\left( Z_++\gamma Z_\Delta \right)\n_{\mathit{l}^2}^q\E\left[\n G_++\gamma G_\Delta-\left(\hat{G}_+(\textbf{n})+\gamma\hat{G}_\Delta(\textbf{n})\right)\n_{\mathit{l}^2}^q1_{ D_\textbf{n}(\delta)}\right]
\right.\nonumber
\\
&\left.
+\E\left[\n \left( Z_++\gamma Z_\Delta \right)-\left(\hat{Z}_+(\textbf{n})+\gamma\hat{Z}_\Delta(\textbf{n})\right)\n_{\mathit{l}^2}^q\n G_++\gamma G_\Delta-\left(\hat{G}_+(\textbf{n})+\gamma\hat{G}_\Delta(\textbf{n})\right)\n_{\mathit{l}^2}^q1_{D_\textbf{n}}(\delta)\right]
\right).
\end{align}
\textbf{Step 3}: Bound the tail part in terms of the probability of the set $D_\textbf{n}(\delta)^c$.
\\
By the definition of the set $D_\textbf{n}(\delta)$,
\begin{align}
&\E\left[\n \left( Z_++\gamma Z_\Delta \right)-\left(\hat{Z}_+(\textbf{n})+\gamma\hat{Z}_\Delta(\textbf{n})\right)\n_{\mathit{l}^2}^q\n G_++\gamma G_\Delta-\left(\hat{G}_+(\textbf{n})+\gamma\hat{G}_\Delta(\textbf{n})\right)\n_{\mathit{l}^2}^q1_{D_\textbf{n}}(\delta)\right]
\nonumber
\\
&\le
\delta^q\n\left(G_++\gamma G_\Delta\right)^{-1}\n^q \E\left[\n \left( Z_++\gamma Z_\Delta \right)-\left(\hat{Z}_+(\textbf{n})+\gamma\hat{Z}_\Delta(\textbf{n})\right)\n_{\mathit{l}^2}^q\right].
\end{align}
For the second term on the right-most side of \eqref{cnx11}, using the Radon and Hölder inequalities (with conjugate exponents $\frac{\zeta}{q}$ and $\frac{\zeta}{\zeta-q}$) we have
\begin{align}\label{lastTermInVar}
\E\left[\n\hat{\beta}_\gamma(\textbf{n})\n_{\mathit{l}^2}^q1_{C_\textbf{n}\cap D_\textbf{n}(\delta)^c}\right]
&\le 
\E\left[\n \hat{Z}_+(\textbf{n})+\gamma\hat{Z}_\Delta(\textbf{n}) \n_{\mathit{l}^2}^q \n\left(G_++\gamma G_\Delta\right)^{-1}\n_{\mathit{l}^2}^q1_{C_\textbf{n}\cap D_\textbf{n}(\delta)^c}\right]
\nonumber
\\
&\le
C^q\E\left[\n \hat{Z}_+(\textbf{n})+\gamma\hat{Z}_\Delta(\textbf{n}) \n_{\mathit{l}^2}^q 1_{ D_\textbf{n}(\delta)^c}\right]
\le
C^q\E\left[\n \hat{Z}_+(\textbf{n})+\gamma\hat{Z}_\Delta(\textbf{n}) \n_{\mathit{l}^2}^{\zeta}\right]^{\frac{q}{\zeta}}\P\left(D_\textbf{n}(\delta)^c\right)^{\frac{\zeta-q}{\zeta}}.
\end{align}
Let us now bound the term $\E\left[\n \hat{Z}_+(\textbf{n})+\gamma\hat{Z}_\Delta(\textbf{n}) \n_{\mathit{l}^2}^{\zeta/2}\right]$ above. By the Radon inequality it follows that 
\begin{align*}
&\n \hat{Z}_+(\textbf{n})+\gamma\hat{Z}_\Delta(\textbf{n}) \n_{\mathit{l}^2}^{\zeta}
\le 
2^{\zeta-1}\left(\n \hat{Z}_+(\textbf{n})\n_{\mathit{l}^2}^{\zeta}+\gamma^{\zeta}\n \hat{Z}_\Delta(\textbf{n})\n_{\mathit{l}^2}^{\zeta} \right)
\le
2^{\zeta-1}(1+\gamma^{\zeta})\left( \sum_{i=0}^k\n \frac{1}{n_{A_i}} (\mathbb{X}^{A_i}(\textbf{n}))^T\mathbb{Y}^{A_i}(\textbf{n}) \n_{\mathit{l}^2}\right)^{\zeta} 
\\
&\le 2^{\zeta-1}(1+\gamma^{\zeta})(k+1)^{\zeta-1}\sum_{i=0}^k\n \frac{1}{n_{A_i}} (\mathbb{X}^{A_i}(\textbf{n}))^T\mathbb{Y}^{A_i}(\textbf{n}) \n_{\mathit{l}^2}^{\zeta}.
\end{align*}
Further applications of the Radon inequality yields
\begin{align}
&\n \frac{1}{n_{A_i}} (\mathbb{X}^{A_i}(\textbf{n}))^T\mathbb{Y}^{A_i}(\textbf{n}) \n_{\mathit{l}^2}^{\zeta}
=
\left(\sum_{l=1}^{p} \left| \frac{1}{n_{A_i}} \sum_{u=1}^{n_{A_i}}X^{A_i}_u(l)Y^{A_i}_u \right|\right)^{\zeta}
\le
\sum_{l=1}^{p}\left| \frac{1}{n_{A_i}} \sum_{u=1}^{n_{A_i}}X^{A_i}_u(l)Y^{A_i}_u \right|^{\zeta/2}p^{\zeta-1}
\nonumber
\\
&\le
\sum_{l=1}^{p}\frac{1}{n_{A_i}}\sum_{u=1}^{n_{A_i}}\left|  X^{A_i}_u(l)Y^{A_i}_u \right|^{\zeta}p^{\zeta}.
\end{align}
Returning to \eqref{expbound} we now have that
\begin{align}\label{Zxibnd}
\E\left[\n \hat{Z}_+(\textbf{n})+\gamma\hat{Z}_\Delta(\textbf{n}) \n_{\mathit{l}^2}^{\zeta}\right]
&\le  
2^{\zeta-1}(1+\gamma^{\zeta})(k+1)^{\zeta}\sum_{l=1}^{p}\frac{1}{n_{A_i}}\sum_{u=1}^{n_{A_i}}\E\left[\left|  X^{A_i}_u(l)Y^{A_i}_u \right|^{\zeta}\right]p^{\zeta-1}
\nonumber
\\
&\le 2^{\zeta-1}(1+\gamma^{\zeta})(k+1)^{\zeta}p^{\zeta}\tilde{M}(\zeta).
\end{align}
Plugging this into \eqref{lastTermInVar} we get
\begin{align}\label{Zxibnd1}
&\E\left[\n\hat{\beta}_\gamma(\textbf{n})  \n_{\mathit{l}^2}^q1_{ D_\textbf{n}(\delta)^c\cap C_\textbf{n}}\right]
\le  
\P\left(D_\textbf{n}(\delta)^c\right)^{\frac{\zeta-q}{\zeta}}C^q\left(2^{\zeta-1}(1+\gamma^{\zeta/2})(k+1)^{\zeta}p^{\zeta-1}\tilde{M}(\zeta)\right)^{\frac{q}{\zeta}}.
\end{align}
\\
\textbf{Step 4}: Split the $G$-part into independent increments\\
For each $i\in \{1,...,k\}$, define for $j=1,...,n_{A_i}$ and $u,v\in\{1,...,p\}$, the matrices with entries
$$W(j)_{u,v}^{n_{A_i},A_i}=(1+\gamma) w_i^2\mathbb{X}^{n_{A_i},A_i}_{u,j}\mathbb{X}^{n_{A_i},A_i}_{v,j}-(1+\gamma) w_i^2\E\left[(X^{A_i})^TX^{A_i}\right]_{u,v},$$
and for $i=0$,
$$ W(j)_{u,v}^{n_{A_i},A_i}=(1-\gamma)\mathbb{X}^{n_{A_i},A_i}_{u,j}\mathbb{X}^{n_{A_i},A_i}_{v,j}-(1-\gamma)\E\left[ (X^{A_i})^TX^{A_i}\right]_{u,v}.$$
Then $\{W(j)_{u,v}^{n_{A_i},A_i}\}_{j=1}^{n_{A_i}}$ are i.i.d. and $\E\left[W(j)_{u,v}^{n_{A_i}}\right]=0$, for every $u,v\in\{1,...,p\}$. Moreover 
$$\frac{1}{n_{A_i}}\sum_{i=0}^k\sum_{j=1}^{n_{A_i}}W(j)^{n_{A_i},A_i}=\hat{G}_+(\textbf{n})+\gamma\hat{G}_\Delta(\textbf{n})- G_++\gamma G_\Delta.$$ 
By the Radon inequality we have for $a>1$
\begin{align}\label{Gnormbound}
&\n \hat{G}_+(\textbf{n})+\gamma\hat{G}_\Delta(\textbf{n})- G_++\gamma G_\Delta\n_{\mathit{l}^2}^a 
=
\n \sum_{i=0}^k\frac{1}{n_{A_i}}\sum_{j=1}^{n_{A_i}}W(j)^{n_{A_i},A_i} \n_{\mathit{l}^2}^a
\le
(k+1)^{a-1}\sum_{i=0}^k\n \frac{1}{n_{A_i}}\sum_{j=1}^{n_{A_i}}W(j)^{n_{A_i},A_i} \n_{\mathit{l}^2}^2
\nonumber
\\
&=
(k+1)^{a-1}\sum_{i=0}^k\n \frac{1}{n_{A_i}}\sum_{j=1}^{n_{A_i}}W(j)^{n_{A_i},A_i} \n_{1,1}^a
=(k+1)^{a-1}\sum_{i=0}^k\left(\sum_{u=1}^p\sum_{v=1}^p  \left| \sum_{j=1}^{n_{A_i}}\frac{1}{n_{A_i}} W(j)_{u,v}^{n_{A_i},A_i} \right|\right)^a
\nonumber
\\
&\le
(k+1)^{a-1}p^{2(a-1)} \sum_{i=0}^k\sum_{u=1}^p\sum_{v=1}^p\left| \sum_{j=1}^{n_{A_i}}\frac{1}{n_{A_i}}W(j)_{u,v}^{n_{A_i},A_i} \right|^a.
\end{align}
\\
\textbf{Step 5}: Apply a Marcinkiewicz-Zygmund type inequality to the increments\\
The following bound found is a special case of the main result in \cite{rio2009moment} which applies to any independent zero mean collection of variables $U_1,...,U_n$ with finite a:th moment,
$$\E\left[\left|\sum_{j=1}^n U_j\right|^a\right]^{2/a}\le (a-1)\sum_{j=1}^n\E\left[\left|U_j\right|^a\right]^{\frac{2}{a}}.$$ 
Raising both sides above to the power $a/2$ gives
$$\E\left[\left|\sum_{j=1}^n U_j\right|^a\right]\le (a-1)^{a/2}\left(\sum_{j=1}^n\E\left[\left|U_j\right|^a\right]^{\frac{2}{a}}\right)^{\frac{a}{2}}.$$
Combining this with the Radon inequality yields
\begin{align}\label{MaZy}
\E\left[\left|\sum_{j=1}^n U_j\right|^a\right]\le n^{\frac{a}{2}-1}(a-1)^{a/2}\sum_{j=1}^n\E\left[\left|U_j\right|^a\right].
\end{align}
\\
\textbf{Step 6}: Finalize the the local $G$-bound\\
Since $L^{\zeta''}(\P)\subseteq L^{\zeta',w}(\P)$ for all $1\le \zeta''<\zeta'$, if we let $\psi(\zeta')=q\wedge(2(\alpha\zeta'-1))$ (noting that $4(\alpha\zeta'-1)<\zeta'$) it follows that $M(2\psi(\zeta'))<\infty$.
Applying \eqref{MaZy} to \eqref{Gnormbound}, and the fact that 
$$\E\left[\left| W(j)_{u,v}^{n_{A_i},A_i} \right|^a\right]
\le (1+\gamma)^a\E\left[\left|X^{A_i}(u)X^{A_i}(v)-\E\left[X^{A_i}(u)X^{A_i}(v)\right]\right|^a\right]
\le (1+\gamma)^a2^{a+1}\E\left[\left|X^{A_i}(u)X^{A_i}(v)\right|^a\right]$$ 
it then follows that
\begin{align}\label{Gexp1}
&\E\left[\n \hat{G}_+(\textbf{n})+\gamma\hat{G}_\Delta(\textbf{n})- G_++\gamma G_\Delta\n_{\mathit{l}^2}^q 1_{ D_\textbf{n}(\delta)}\right]
\nonumber
\\
&\le
\left(\delta\n \left(G_++\gamma G_\Delta\right)^{-1}\n_{\mathit{l}^2}\right)^{q-(q\wedge(\alpha\zeta'-1))}\E\left[\n \hat{G}_+(\textbf{n})+\gamma\hat{G}_\Delta(\textbf{n})- G_++\gamma G_\Delta\n_{\mathit{l}^2}^{\psi(\zeta')} \right]
\nonumber
\\
&\le  
\left(\delta\n \left(G_++\gamma G_\Delta\right)^{-1}\n_{\mathit{l}^2}\right)^{q-\psi(\zeta')}(k+1)^{\psi(\zeta')-1}p^{2(\psi(\zeta')-1)} \sum_{i=0}^k\sum_{u=1}^p\sum_{v=1}^p\E\left[\left| \sum_{j=1}^{n_{A_i}}\frac{1}{n_{A_i}}W(j)_{u,v}^{n_{A_i},A_i} \right|^{\psi(\zeta')}\right]
\nonumber
\\
&\le 
\left(\delta\n \left(G_++\gamma G_\Delta\right)^{-1}\n_{\mathit{l}^2}\right)^{q-\psi(\zeta')}(k+1)^{\psi(\zeta')-1}p^{2(\psi(\zeta')-1)} (\psi(\zeta')-1)^{\left(\psi(\zeta')\right)/2}\frac{2^{q+1}}{n_{A_i}^{\frac{\psi(\zeta')}{2}+1}}\sum_{i=0}^k\sum_{u=1}^p\sum_{v=1}^p\sum_{j=1}^{n_{A_i}}\E\left[\left|W(j)_{u,v}^{n_{A_i},A_i}\right|^{\psi(\zeta')}\right]\nonumber
\\
&\le
\left(\delta\n \left(G_++\gamma G_\Delta\right)^{-1}\n_{\mathit{l}^2}\right)^{q-\psi(\zeta')}(k+1)^{\psi(\zeta')-1}p^{2(\psi(\zeta')-1)} (\psi(\zeta')-1)^{\left(\psi(\zeta')\right)/2}(1+\gamma)^{\psi(\zeta')}2^{q+1}
\nonumber
\\
&\times
\sum_{i=0}^k\sum_{u=1}^p\sum_{v=1}^p\sum_{j=1}^{n_{A_i}}\frac{1}{n_{A_i}^{\frac{\psi
(\zeta')}{2}+1}}\E\left[\left|X^{A_i}(u)X^{A_i}(v)\right|^{\psi(\zeta')}\right].
\end{align}
Bounding the sum above with the Cauchy-Schwartz inequality yields,
\begin{align}\label{Wexpand}
\sum_{i=0}^k\sum_{u=1}^p\sum_{v=1}^p\sum_{j=1}^{n_{A_i}}\frac{1}{n_{A_i}^{\frac{\psi
(\zeta')}{2}+1}}\E\left[\left|X^{A_i}(u)X^{A_i}(v)\right|^{\psi(\zeta')}\right]
&\le
\sum_{i=0}^k\sum_{u=1}^p\sum_{v=1}^p\frac{1}{n_{A_i}^{\frac{\psi
(\zeta')}{2}}}\E\left[\left|X^{A_i}(u)\right|^{2q\wedge(4(\alpha\zeta'-1))}\right]^{\frac 12}\E\left[\left|X^{A_i}(v)\right|^{2q\wedge(4(\alpha\zeta'-1))}\right]^{\frac 12} 
\nonumber
\\
&\le 
p^2M(2q\wedge(4(\alpha\zeta'-1)))\sum_{i=0}^k\frac{1}{n_{A_i}^{\frac{\psi
(\zeta')}{2}}}
\nonumber
\\
&\le
p^2M(2q\wedge(4(\alpha\zeta'-1)))\frac{k}{\left(n_{A_0}\wedge...\wedge n_{A_k}\right)^{\frac{\psi
(\zeta')}{2}}}.
\end{align}
Plugging \eqref{Wexpand} back into \eqref{Gexp1} we get
\begin{align}\label{Gexp}
&\E\left[\n \hat{G}_+(\textbf{n})+\gamma\hat{G}_\Delta(\textbf{n})- G_++\gamma G_\Delta\n_{\mathit{l}^2}^q 1_{ D_\textbf{n}(\delta)}\right]
\nonumber
\\
&\le
\frac{\delta^{q-\psi(\zeta')}(k+1)^{\psi(\zeta')}p^{2\psi(\zeta')} (\psi(\zeta')-1)^{\psi(\zeta')/2}(1+\gamma)^{\psi(\zeta')}M(2q\wedge(4(\alpha\zeta'-1)))}{n_{A_i}^{\frac{\psi
(\zeta')}{2}}}.
\end{align}
\\
\textbf{Step 7}: Correspondingly bound the $Z$-part\\
Similarly to $W$, for each $i\in \{1,...,k\}$, $j=1,...,n$ and $u\in\{1,...,p\}$ we define the vectors with entries
$$V(j)_{u}^{n_{A_i},A_i}=(1+\gamma )w_i^2\mathbb{X}^{n_{A_i},A_i}_{u,j}\mathbb{Y}^{n_{A_i},A_i}_{j}-\E\left[(1+\gamma )w_i^2(X^{A_i})^TY^{A_i}\right]_{u},$$
and
$$V(j)_{u}^{n_{A_0},A_0}=(1-\gamma)\mathbb{X}^{n_{A_0},A_0}_{u,j}\mathbb{Y}^{n_{A_0},A_0}_{j}-\E\left[(1-\gamma)(X^{A_0})^TY^{A_0}\right]_{u}.$$
Then $\{V(j)_{u}^{n_{A_i},A_i}\}_{j=1}^{n_{A_i}}$ are independent and $\E\left[V(j)_{u}^{n_{A_i}}\right]=0$, for every $u\in\{1,...,p\}$.
We also have
$$\frac{1}{n_{A_i}}\sum_{i=0}^k\sum_{j=1}^nV(j)^{n_{A_i},A_i}=\hat{Z}_+(\textbf{n})+\gamma\hat{Z}_\Delta(\textbf{n})- \left( Z_++\gamma Z_\Delta \right).$$ 
Since $\n.\n_{\mathit{l}^2}\le \n.\n_{\mathit{l}^1}$, analogously to \eqref{Gnormbound},
\begin{align}\label{Znormbound}
&\n \hat{Z}_+(\textbf{n})+\gamma\hat{Z}_\Delta(\textbf{n})- Z_++\gamma Z_\Delta\n_{\mathit{l}^2}^q 
=
\n \sum_{i=0}^k\sum_{j=1}^{n_{A_i}}V(j)^{n_{A_i},A_i} \n_{\mathit{l}^2}^q
\le
(k+1)^{q-1}\sum_{i=0}^k\n \sum_{j=1}^{n_{A_i}}V(j)^{n_{A_i},A_i} \n_{\mathit{l}^2}^q
\nonumber
\\
&\le
(k+1)^{q-1}\sum_{i=0}^k\n \sum_{j=1}^{n_{A_i}}V(j)^{n_{A_i},A_i} \n_{\mathit{l}^1}^q
=(k+1)^{q-1}\sum_{i=0}^k\left(\sum_{u=1}^p  \left| \sum_{j=1}^{n_{A_i}}\frac{1}{n_{A_i}}V(j)_{u}^{n_{A_i},A_i} \right|\right)^q
\le
(k+1)^{q-1}p^{q-1} \left| \sum_{j=1}^{n_{A_i}}\frac{1}{n_{A_i}}V(j)_{u}^{n_{A_i},A_i} \right|^q.
\end{align}
Since
$$\E\left[\left|\sum_{j=1}^{n_{A_i}}\frac{1}{n_{A_i}}V(j)_{u}^{n_{A_i},A_i} \right|^q\right]
\le 
\sum_{j=1}^{n_{A_i}}\frac{(q-1)^{q/2}2^{q+1}(1+\gamma)^q}{n_{A_i}^{q/2+1}}\E\left[\left|V(j)_{u}^{n_{A_i},A_i}\right|^q\right] $$
Analogously to \eqref{Gexp} we get,
\begin{align}\label{Zexp}
\E\left[\n \hat{Z}_+(\textbf{n})+\gamma\hat{Z}_\Delta(\textbf{n})- Z_++\gamma Z_\Delta\n_{\mathit{l}^2}^q\right]
\le 
(k+1)^{q}p^{q}(q-1)^{q/2}(1+\gamma)^q\frac{1}{\left(n_{A_0}\wedge...\wedge n_{A_k}\right)^{q/2}}\tilde{M}(q).
\end{align}
\\
\textbf{Step 8}: Plug-in our $Z$ and local $G$-bound\\
Returning to \eqref{expbound}, we plug in \eqref{Gexp} and \eqref{Zexp}, to find
\small
\begin{align}\label{penul}
&\E\left[\n\hat{\beta}_\gamma(\textbf{n}) -\beta_\gamma  \n_{\mathit{l}^2}^q1_{ D_\textbf{n}(\delta)\cap C_\textbf{n}}\right]
\le 
2^{q-1}\n\left(G_++\gamma G_\Delta\right)^{-1} \n_{\mathit{l}^2}^q \frac{1}{n_{A_i}^{q/2}}(k+1)^{q}p^q(1+\gamma)^q(q-1)^{q/2}\tilde{M}(q)
\nonumber
\\
&+2^{q-1}(1+\delta)^q\n \left(G_++\gamma G_\Delta\right)^{-1} \n_{\mathit{l}^2}^{2q} 
\left( 
\frac{\n\left( Z_++\gamma Z_\Delta \right)\n_{\mathit{l}^2}^q\delta^{q-\psi(\zeta')}(k+1)^{\psi(\zeta')}p^{2\psi(\zeta')} (\psi(\zeta')-1)^{\psi(\zeta')/2}(1+\gamma)^{\psi(\zeta')}M(2q\wedge(4(\alpha\zeta'-1)))}{n_{A_i}^{\frac{\psi
(\zeta')}{2}}}
\right.
\nonumber
\\
&\left.+
\delta^q \n\left(G_++\gamma G_\Delta\right)^{-1}\n_{\mathit{l}^2}^q
\frac{\delta^{q-\psi(\zeta')}(k+1)^{\psi(\zeta')}p^{2\psi(\zeta')} (\psi(\zeta')-1)^{\psi(\zeta')/2}(1+\gamma)^{\psi(\zeta')}M(2q\wedge(4(\alpha\zeta'-1)))}{n_{A_i}^{\frac{\psi
(\zeta')}{2}}}\right)
\nonumber
\\
&\le
\frac{2^{q+1}}{n_{A_i}^{\frac{\psi
(\zeta')}{2}}}(k+1)^{q}p^{q\vee(2\psi(\zeta'))}(1+\gamma)^q(1+\delta)^{3q}
(M(2\psi(\zeta'))\vee\tilde{M}(q)) (\psi(\zeta')-1)^{\psi(\zeta')/2}
\times\left(1\vee\n \left(G_++\gamma G_\Delta\right)^{-1} \n_{\mathit{l}^2}\right)^{3q}\left(1\vee\n Z_++\gamma Z_\Delta \n_{\mathit{l}^2}\right)^{q}.
\end{align}
\normalsize
\\
\textbf{Step 8}: Bound the measure of the set $D_{\textbf{n}}(\delta)^c$ and bound the number of necessary samples $N$\\
We may recall from the proof of Theorem 1 (set $c'=\delta\n\left(G_+ +\gamma G_\Delta\right)^{-1}\n_{\mathit{l}^2}$ in \eqref{PDnc}) that
\begin{align}\label{Dnc1}
\P\left( C_\textbf{n}^c\right)\le\P\left( D_{\textbf{n}}(\delta)^c\right)\le p\sum_{i=0}^ke^{-\frac{\left(\left(\delta\n \left(G_++\gamma G_\Delta\right)^{-1}\n_{\mathit{l}^2} -(1+\gamma )h_i(n_{A_i}^\alpha)\right)_+\right)^2}{2p^2(k+1)^2}n_{A_i}^{1-4\alpha}}+1-\prod_{i=1}^k\left(F_{\left|X^{A_i}(1)\right|,...,\left|X^{A_i}(p)\right|}(n_{A_i}^\alpha,...,n_{A_i}^\alpha)\right)^{n_{A_i}},
\end{align}
where $h_i(x)=\left(\sum_{u=1}^p\sqrt{\E\left[ |X^{A_i}(u)|^21_{(E_i'(x))^c}\right]}\right)^2$ and $E_i'(x)=\left\{|X^{A_i}(u)|\le x,1\le u \le p\right\}$. By the vector space property of $L^{\zeta,w}(\P)$ it follows that $\sum_{l=1}^p|X^{A_i}(l)|\in L^{\zeta,w}(\P)$ which in turn implies $|X^{A_i}(1)|\vee...\vee |X^{A_i}(p)|\in L^{\zeta,w}(\P)$, i.e.
\begin{align*}
\P\left(|X^{A_i}(1)|\vee...\vee |X^{A_i}(p)|>x\right)&=
\P\left(\bigcup_{l=1}^p\left\{|X^{A_i}(l)|>x\right\}\right)
\\
&\le
\sum_{l=1}^p\P\left( \left|X^{A_i}(l)\right|>x\right)
\le \frac{pM_w(\zeta')}{x^{\zeta'}}.
\end{align*}
If we let $\sigma_w(\zeta'):= pM_w(\zeta')$ then the above inequality is equivalent to
\begin{align}\label{weak}
F_{|X^{A_i}(1)|,...,|X^{A_i}(p)|}(x,...,x)\ge 1-\frac{\sigma_w(\zeta')}{x^{\zeta'}}.
\end{align}
This leads to
\begin{align}\label{cdfprod}
&\prod_{i=0}^k\left(F_{\left|X^{A_i}(1)\right|,...,\left|X^{A_i}(p)\right|}(n_{A_i}^\alpha,...,n_{A_i}^\alpha)\right)^{n_{A_i}}
\ge  
\prod_{i=0}^k\left(1-\frac{\sigma_w(\zeta')}{n_{A_i}^{\alpha\zeta'}}\right)^{n_{A_i}} 1_{n_{A_i}\ge \sigma_w(\zeta')^{\frac{1}{\alpha\zeta'}}}, 
\end{align}
which converges to 1 (as $\bigwedge_{i=0}^kn_{A_i}\to\infty$) for all $n_{A_i}\ge \sigma_w(\zeta')^{\frac{1}{\alpha\zeta'}}$. For $n_{A_i}\ge \sigma_w(\zeta')^{\frac{1}{\alpha\zeta'}}$,
\begin{align}\label{cdfrate}
&\left| \prod_{i=0}^k\left(F_{\left|X^{A_i}(1)\right|,...,\left|X^{A_i}(p)\right|}(n_{A_i}^\alpha,...,n_{A_i}^\alpha)\right)^{n_{A_i}}
-1 \right|
=1-e^{\sum_{i=0}^k n_{A_i} \log\left(F_{\left|X^{A_i}(1)\right|,...,\left|X^{A_i}(p)\right|}(n_{A_i}^\alpha,...,n_{A_i}^\alpha)\right)}
\nonumber
\\
&\le 1-e^{\sum_{i=0}^k n_{A_i} \log\left(1-\frac{\sigma_w(\zeta')}{n_{A_i}^{\alpha\zeta'}}\right)}
\le
1-e^{\sum_{i=0}^k n_{A_i} \left(-\frac{\sigma_w(\zeta')}{n_{A_i}^{\alpha\zeta'}}
\right)}
\le
\sum_{i=0}^k\frac{\sigma_w(\zeta')}{n_{A_i}^{\alpha\zeta'-1}}
\le \frac{(k+1)\sigma_w(\zeta')}{\left(n_{A_0}\wedge...\wedge n_{A_k}\right)^{\alpha\zeta'-1}}, 
\end{align}
where we did a first order Taylor expansion of the logarithm and the exponential function. For the purpose of dealing with the first term in \eqref{Dnc1}, define $N'_{A_i}=\min\left\{n_{A_i}\in\N: (1+\gamma)h_i(n_{A_i}^\alpha)<\frac{\delta\n \left(G_++\gamma G_\Delta\right)^{-1}\n_{\mathit{l}^2}}{2} \right\}$. We have for $n_{A_i}\ge \sigma_w(\zeta')^{\frac{1}{\alpha\zeta'}}$ and $1<\eta'<\zeta'/2$,

\begin{align}\label{hbound}
h_i(n_{A_i}^\alpha)
&=\left(\sum_{u=1}^p\sqrt{\E\left[ |X^{A_i}(u)|^21_{(E_i'(n_{A_i}^\alpha))^c}\right]}\right)^2
=\left(\sum_{u=1}^p\sqrt{\int_0^\infty\P\left(|X^{A_i}(u)|^21_{(E_i'(n_{A_i}^\alpha))^c}\ge x\right)dx}\right)^2
\nonumber
\\
&\le
\left(\sum_{u=1}^p\sqrt{\int_0^\infty\P\left(|X^{A_i}(u)|\ge \sqrt{x}\right)^{\frac{1}{\eta'}}\P\left((E_i'(n_{A_i}^\alpha))^c\right)^{\frac{\eta'-1}{\eta'}} dx}\right)^2
\nonumber
\\
&\le 
\left(\sum_{u=1}^p\sqrt{\int_1^\infty\P\left(|X^{A_i}(u)|\ge \sqrt{x}\right)^{\frac{1}{\eta'}}\P\left((E_i'(n_{A_i}^\alpha))^c\right)^{\frac{\eta'-1}{\eta'}} dx+\P\left((E_i'(n_{A_i}^\alpha))^c\right)}\right)^2
\nonumber
\\
&\le
\left(\sum_{u=1}^p\sqrt{\int_1^\infty \frac{M_w(\zeta')^{\frac{1}{\eta'}}}{x^{\frac{\zeta'}{2\eta'}}} \P\left((E_i'(n_{A_i}^\alpha))^c\right)^{\frac{\eta'-1}{\eta'}} dx+\P\left((E_i'(n_{A_i}^\alpha))^c\right)}\right)^2 \le \left( p\frac{M_w(\zeta')^{\frac{1}{\eta'}}}{\frac{\zeta'}{2\eta'}-1}+1\right)\P\left((E_i'(n_{A_i}^\alpha))^c\right)^{\frac{\eta'-1}{\eta'}}
\nonumber
\\
&\le
\left( p\frac{M_w(\zeta')^{\frac{1}{\eta'}}}{\frac{\zeta'}{2\eta'}-1}+1\right) \left(1-F_{\left|X^{A_i}(1)\right|,...,\left|X^{A_i}(p)\right|}\left(n_{A_i}^\alpha,...,n_{A_i}^\alpha\right)\right)^{\frac{\eta'-1}{\eta'}}
\le  \left( p\frac{M_w(\zeta')^{\frac{1}{\eta'}}}{\frac{\zeta'}{2\eta'}-1}+1\right)\left(\frac{\sigma_w(\zeta')}{n_{A_i}^{\alpha\zeta'}}\right)^{\frac{\eta'-1}{\eta'}}.
\end{align}
Solving $(1+\gamma)h_i(n_{A_i}^\alpha)<\frac{\delta\n \left(G_++\gamma G_\Delta\right)^{-1}\n_{\mathit{l}^2}}{2}$ for $n_{A_i}$, while also taking into account that $n_{A_i}$ leads to
\begin{align}\label{NPrime1}
N'_{A_i}\le 1\vee\left(\left(\frac{2(1+\gamma)}{\delta\n \left(G_++\gamma G_\Delta\right)^{-1}\n_{\mathit{l}^2}}\right)^{\frac{1}{\alpha\zeta'\frac{\eta'-1}{\eta'}}}\left(\frac{pM_w(\zeta')^{\frac{1}{\eta'}}}{\frac{\zeta'}{2\eta'-1}}+1\right)^{\frac{1}{\alpha\zeta'\frac{\eta'-1}{\eta'}}}\right)\sigma_w(\zeta')^{\frac{1}{\alpha\zeta'}}.
\end{align}
Plugging \eqref{cdfrate} into \eqref{Dnc1} yields
\begin{align}\label{sumrate11}
&\P\left(D_\textbf{n}(\delta)^c\right)\le p\sum_{i=0}^ke^{-\frac{\left(\left(\delta\n \left(G_++\gamma G_\Delta\right)^{-1}\n_{\mathit{l}^2} -(1+\gamma )h_i(n_{A_i}^\alpha)\right)_+\right)^2}{2p^2(k+1)^2}n_{A_i}^{1-4\alpha}}+\frac{\sigma_w(\zeta')}{\left(n_{A_0}\wedge...\wedge n_{A_k}\right)^{\alpha\zeta'-1}}.
\end{align}
For $a,b,c\in\R^+$ we have the following inequality, $e^{-ax^b}b^{-\frac{c}{b}}\le x^{-c}$ for all $x\ge \left(\frac{c}{ab}\right)^{\frac{1}{b}}$ which implies that $e^{-ax^b}b^{-\frac{c}{b}}\left(\frac{c}{ab}\right)^{\frac{-c}{b}}=e^{-ax^b}\left(\frac{c}{a}\right)^{\frac{-c}{b}}\le x^{-c}$ for all $x>0$ which is equivalent to $e^{-ax^b}\le\left(\frac{c}{a}\right)^{\frac{c}{b}}x^{-c}$ for all $x>0$. We utilize this fact in the following bound (setting $a=\frac{\delta^2\n \left(G_++\gamma G_\Delta\right)^{-1}\n_{\mathit{l}^2}^2}{4p^2(k+1)^2}$, $b=1-4\alpha$ and $c=\alpha\zeta'-1$)

\begin{align}\label{pexp01}
&p\sum_{i=0}^ke^{-\frac{\left(\left(\delta\n \left(G_++\gamma G_\Delta\right)^{-1}\n_{\mathit{l}^2} -(1+\gamma )h_i(n_{A_i}^\alpha)\right)_+\right)^2}{2p^2(k+1)^2}n_{A_i}^{1-4\alpha}}
<
p\sum_{i=0}^k1_{n_{A_i}<N'_{A_i}}+ p\sum_{i=0}^ke^{-\frac{\delta^2\n \left(G_++\gamma G_\Delta\right)^{-1}\n_{\mathit{l}^2}^2}{4p^2(k+1)^2}n_{A_i}^{1-4\alpha}}1_{n_{A_i}\ge N'_{A_i}}\nonumber
\\
&\le 
p\sum_{i=0}^k\left(\frac{N'_{A_i}}{n_{A_0}\wedge...\wedge n_{A_k}}\right)^{\alpha\zeta'-1}+p\sum_{i=0}^ke^{-\frac{\delta^2\n \left(G_++\gamma G_\Delta\right)^{-1}\n_{\mathit{l}^2}^2}{4p^2(k+1)^2}n_{A_i}^{1-4\alpha}}
\le 
p(k+1)\frac{(N'_{A_i})^{\alpha\zeta'-1} +\left(\frac{\alpha\zeta'-1}{\frac{\delta^2\n \left(G_++\gamma G_\Delta\right)^{-1}\n_{\mathit{l}^2}^2}{4p^2(k+1)^2}}\right)^{\frac{\alpha\zeta'-1}{1-4\alpha}}}{\left(n_{A_0}\wedge...\wedge n_{A_k}\right)^{\alpha\zeta'-1}}
\nonumber
\\
&=p(k+1)\frac{(N'_{A_i})^{\alpha\zeta'-1} +\left(\frac{(\alpha\zeta'-1)4p^2(k+1)^2}{\delta^2\n \left(G_++\gamma G_\Delta\right)^{-1}\n_{\mathit{l}^2}^2}\right)^{\frac{\alpha\zeta'-1}{1-4\alpha}}}{\left(n_{A_0}\wedge...\wedge n_{A_k}\right)^{\alpha\zeta'-1}}.
\end{align}
If we assume that 
\begin{align}\label{Nw1}
&N\ge \left(\frac{(p+1)(M_w(\zeta')\vee 1)(k+1)^{\zeta'}}{1-\left(\frac{3}{4}\right)^{\frac{1}{k+1}}}\right)^{\frac{1}{\alpha\zeta'-1}}
\end{align}
(which is larger than $\sigma_w(\zeta')^{\frac{1}{\alpha\zeta'}}$ so \eqref{cdfrate} applies) it will imply that $\left(1-\frac{\sigma_w(\zeta')}{\left(n_{A_0}\wedge...\wedge n_{A_k}\right)^{\alpha\zeta'}}\right)^{k+1}\ge \frac 34,$
while if we also assume 
\tiny
\begin{align}\label{Nw2}
N\ge \left(4p(k+1)\left( 1\vee\left(\left(\frac{2(1+\gamma)}{\delta\n \left(G_++\gamma G_\Delta\right)^{-1}\n_{\mathit{l}^2}}\right)^{\frac{1}{\alpha\zeta'\frac{\eta'-1}{\eta'}}}\left(\frac{pM_w(\zeta')^{\frac{1}{\eta'}}}{\frac{\zeta'}{2\eta'-1}}+1\right)^{\frac{1}{\alpha\zeta'\frac{\eta'-1}{\eta'}}}\right)\sigma_w(\zeta')^{\frac{1}{\alpha\zeta'}}\right)^{\alpha\zeta'-1} 
+
4p(k+1)\left(\frac{(\alpha\zeta'-1)4p^2(k+1)^2}{\delta^2\n \left(G_++\gamma G_\Delta\right)^{-1}\n_{\mathit{l}^2}^2}\right)^{\frac{\alpha\zeta'-1}{1-4\alpha}}\right)^{\frac{1}{\alpha\zeta'-1}}
\end{align} 
\normalsize
this will imply that 
\small
$$
\frac{p(k+1)\left( 1\vee\left(\left(\frac{2(1+\gamma)}{\delta\n \left(G_++\gamma G_\Delta\right)^{-1}\n_{\mathit{l}^2}}\right)^{\frac{1}{\alpha\zeta'\frac{\eta'-1}{\eta'}}}\left(\frac{pM_w(\zeta')^{\frac{1}{\eta'}}}{\frac{\zeta'}{2\eta'-1}}+1\right)^{\frac{1}{\alpha\zeta'\frac{\eta'-1}{\eta'}}}\right)\sigma_w(\zeta')^{\frac{1}{\alpha\zeta'}}\right)^{\alpha\zeta'-1} 
+
p(k+1)\left(\frac{(\alpha\zeta'-1)4p^2(k+1)^2}{\delta^2\n \left(G_++\gamma G_\Delta\right)^{-1}\n_{\mathit{l}^2}^2}\right)^{\frac{\alpha\zeta'-1}{1-4\alpha}}}{\left(n_{A_0}\wedge...\wedge n_{A_k}\right)^{\alpha\zeta'-1}}\le \frac 14,$$
\normalsize
for $n_{A_i}\ge N$. Hence, if $n_{A_i}\ge N$
\small
\begin{align}\label{PCn1}
&\P\left( C_\textbf{n}\right)\ge\P\left( D_{\textbf{n}}(\delta)\right)=1-\P\left( D_{\textbf{n}}(\delta)^c\right)\ge 
\prod_{i=0}^k\left(1-\frac{\sigma_w(\zeta')}{n_{A_i}^{\alpha\zeta'}}\right)
-p\sum_{i=0}^ke^{-\frac{\left(\left(\delta\n \left(G_++\gamma G_\Delta\right)^{-1}\n_{\mathit{l}^2} -(1+\gamma )h_i(n_{A_i}^\alpha)\right)_+\right)^2}{2p^2(k+1)^2}n_{A_i}^{1-4\alpha}}\nonumber
\\
&\ge \left(1-\frac{\sigma_w(\zeta')}{\left(n_{A_0}\wedge...\wedge n_{A_k}\right)^{\alpha\zeta'}}\right)^{k+1}\nonumber
\\
&-
\frac{p(k+1)\left( 1\vee\left(\left(\frac{2(1+\gamma)}{\delta\n \left(G_++\gamma G_\Delta\right)^{-1}\n_{\mathit{l}^2}}\right)^{\frac{1}{\alpha\zeta'\frac{\eta'-1}{\eta'}}}\left(\frac{pM_w(\zeta')^{\frac{1}{\eta'}}}{\frac{\zeta'}{2\eta'-1}}+1\right)^{\frac{1}{\alpha\zeta'\frac{\eta'-1}{\eta'}}}\right)\sigma_w(\zeta')^{\frac{1}{\alpha\zeta'}}\right)^{\alpha\zeta'-1} 
+
p(k+1)\left(\frac{(\alpha\zeta'-1)4p^2(k+1)^2}{\delta^2\n \left(G_++\gamma G_\Delta\right)^{-1}\n_{\mathit{l}^2}^2}\right)^{\frac{\alpha\zeta'-1}{1-4\alpha}}
}{\left(n_{A_0}\wedge...\wedge n_{A_k}\right)^{\alpha\zeta'-1}}
\nonumber
\\
&\ge \frac 34-\frac 14= \frac 12.
\end{align}
\normalsize
Plugging in \eqref{pexp01} and \eqref{NPrime1} into \eqref{sumrate11}, which is then plugged into \eqref{sumrate11} and \eqref{Dnc1} successively gives us
\scriptsize
\begin{align}
&\P(D_\textbf{n}(\delta)^c)\le
\nonumber
\\
&\frac{p(k+1)\left( 1\vee\left(\left(\frac{2(1+\gamma)}{\delta\n \left(G_++\gamma G_\Delta\right)^{-1}\n_{\mathit{l}^2}}\right)^{\frac{1}{\alpha\zeta'\frac{\eta'-1}{\eta'}}}\left(\frac{pM_w(\zeta')^{\frac{1}{\eta'}}}{\frac{\zeta'}{2\eta'-1}}+1\right)^{\frac{1}{\alpha\zeta'\frac{\eta'-1}{\eta'}}}\right)\sigma_w(\zeta')^{\frac{1}{\alpha\zeta'}}\right)^{\alpha\zeta'-1}
+
p(k+1)\left(\frac{(\alpha\zeta'-1)4p^2(k+1)^2}{\delta^2\n \left(G_++\gamma G_\Delta\right)^{-1}\n_{\mathit{l}^2}^2}\right)^{\frac{\alpha\zeta'-1}{1-4\alpha}}
+
\sigma_w(\zeta')}{\left(n_{A_0}\wedge...\wedge n_{A_k}\right)^{\alpha\zeta'-1}}
\end{align}
\normalsize
\\
\textbf{Step 9}: Plug in all of our bounds and sum all the terms\\
Then we plug this back into \eqref{Zxibnd1},
\tiny
\begin{align*}
&\E\left[\n\hat{\beta}_\gamma(\textbf{n})   \n_{\mathit{l}^2}^q1_{C_\textbf{n}\cap D_\textbf{n}(\delta)^c}\right]
\le
(1\vee C)^q\left(2^{\zeta-1}(1+\gamma^{\zeta})(k+1)^{\zeta}p^{\zeta-1}\tilde{M}(\zeta)\right)^{\frac{q}{\zeta}}
\nonumber
\\
&\times \left(\frac{p(k+1)\left( 1\vee\left(\left(\frac{2(1+\gamma)}{\delta\n \left(G_++\gamma G_\Delta\right)^{-1}\n_{\mathit{l}^2}}\right)^{\frac{1}{\alpha\zeta'\frac{\eta'-1}{\eta'}}}\left(\frac{pM_w(\zeta')^{\frac{1}{\eta'}}}{\frac{\zeta'}{2\eta'-1}}+1\right)^{\frac{1}{\alpha\zeta'\frac{\eta'-1}{\eta'}}}\right)\sigma_w(\zeta')^{\frac{1}{\alpha\zeta'}}\right)^{\alpha\zeta'-1}
+
p(k+1)\left(\frac{(\alpha\zeta'-1)4p^2(k+1)^2}{\delta^2\n \left(G_++\gamma G_\Delta\right)^{-1}\n_{\mathit{l}^2}^2}\right)^{\frac{\alpha\zeta'-1}{1-4\alpha}}
+
\sigma_w(\zeta')}{\left(n_{A_0}\wedge...\wedge n_{A_k}\right)^{\alpha\zeta'-1}} \right)^{1-\frac{q}{\zeta}}
\end{align*}
\normalsize
We thus have that for $n_{A_i}\ge N$,
\small
\begin{align}\label{hejsan_}
&Var_q\left(\hat{\beta}_\gamma(\textbf{n})\| C_\textbf{n}\right)\le 2\E\left[\n\hat{\beta}_\gamma(\textbf{n})1_{C_\textbf{n}} -\E\left[\hat{\beta}_\gamma(\textbf{n})1_{C_\textbf{n}} \right] \n_{\mathit{l}^2}^q\right] 
\le 
2^{q}\E\left[\n\hat{\beta}_\gamma(\textbf{n}) -\beta_\gamma \n_{\mathit{l}^2}^q1_{C_\textbf{n}} \right]+
2^{q}\n\beta_\gamma \n_{\mathit{l}^2}^q\P\left(D_\textbf{n}(\delta)^c\right)
\nonumber
\\
&\le
2^{q}\E\left[\n\hat{\beta}_\gamma(\textbf{n}) -\beta_\gamma \n_{\mathit{l}^2}^q1_{C_\textbf{n}\cap D_\textbf{n}(\delta)} \right]
+
2^{q}\E\left[\n\hat{\beta}_\gamma(\textbf{n})-\beta_\gamma\n_{\mathit{l}^2}^q1_{C_\textbf{n}\cap D_\textbf{n}(\delta)^c} \right]
+
2^{q}\n\beta_\gamma \n_{\mathit{l}^2}^q\P\left(D_\textbf{n}(\delta)^c\right)
\nonumber
\\
&\le
\frac{2^{2q+1}}{n_{A_i}^{\frac{\psi
(\zeta')}{2}}}(k+1)^{q}p^{q\vee(2\psi(\zeta'))}(1+\gamma)^q(1+\delta)^{3q}
(M(2\psi(\zeta'))\vee\tilde{M}(q)) (\psi(\zeta')-1)^{\psi(\zeta')/2}
\times\left(1\vee\n \left(G_++\gamma G_\Delta\right)^{-1} \n_{\mathit{l}^2}\right)^{3q}\left(1\vee\n Z_++\gamma Z_\Delta \n_{\mathit{l}^2}\right)^{q}
\nonumber
\\
&+
2^{2q-1}(1\vee C)^q\left(2^{\zeta-1}(1+\gamma^{\zeta})(k+1)^{\zeta}p^{\zeta}\tilde{M}(\zeta)\right)^{\frac{q}{\zeta}}
\nonumber
\\
&\times \left(p(k+1)\left( 1\vee\left(\left(\frac{2(1+\gamma)}{\delta\n \left(G_++\gamma G_\Delta\right)^{-1}\n_{\mathit{l}^2}}\right)^{\frac{1}{\alpha\zeta'\frac{\eta'-1}{\eta'}}}\left(\frac{pM_w(\zeta')^{\frac{1}{\eta'}}}{\frac{\zeta'}{2\eta'-1}}+1\right)^{\frac{1}{\alpha\zeta'\frac{\eta'-1}{\eta'}}}\right)\sigma_w(\zeta')^{\frac{1}{\alpha\zeta'}}\right)^{\alpha\zeta'-1}
\right.\nonumber
\\
&\left.+
p(k+1)\left(\frac{(\alpha\zeta'-1)4p^2(k+1)^2}{\delta^2\n \left(G_++\gamma G_\Delta\right)^{-1}\n_{\mathit{l}^2}^2}\right)^{\frac{\alpha\zeta'-1}{1-4\alpha}}
+
\sigma_w(\zeta') \right)^{1-\frac{q}{\zeta}}
\times \left(n_{A_0}\wedge...\wedge n_{A_k}\right)^{\left(1-\frac{q}{\zeta}\right)(1-\alpha\zeta')}
\end{align}
\normalsize
By noting that $\n \left(G_++\gamma G_\Delta\right)^{-1}\n_{\mathit{l}^2}^{-1}\le \n G_++\gamma G_\Delta\n_{\mathit{l}^2}$, we can then trivially dominate the right-most side of \eqref{hejsan_},
\small
\begin{align}\label{hejsan}
&Var_q\left(\hat{\beta}_\gamma(\textbf{n})\| C_\textbf{n}\right)
\nonumber
\\
&\le
2^{2q+1}(1+\gamma)^{q+\frac{\eta'}{\eta'-1}\left(1-\frac{q}{\zeta}\right)}(k+1)^{q+\left(1+\frac{2(\alpha\zeta'-1)}{1-4\alpha}\right)\left(1-\frac{q}{\zeta}\right)}p^{(2\psi(\zeta'))\vee\left(q+1+\left(\left(\frac{\eta'}{\eta'-1}\right)\vee\left(\frac{2(\alpha\zeta'-1)}{1-4\alpha}\right)\right)\left(1-\frac{q}{\zeta}\right)\right)}(1+\delta)^{3q}
\nonumber
\\
&\times(1\vee M(2\psi(\zeta'))\vee\tilde{M}(\zeta)\vee M_w(\zeta'))^{1+\frac{1}{\eta'-1}} (\psi(\zeta')-1)^{\psi(\zeta')/2}
\left(\n G_++\gamma G_\Delta\n_{\mathit{l}^2}\vee\n \left(G_++\gamma G_\Delta\right)^{-1}\n_{\mathit{l}^2}\right)^{\left(\left(\left(\frac{\eta'}{\eta'-1}\right)\vee\left(\frac{2\alpha\zeta'-1}{1-4\alpha}\right)\right)\left(1-\frac{q}{\zeta}\right)\right)\vee 3q}
\nonumber
\\
&\left(1\vee \n Z_++\gamma Z_\Delta\n_{\mathit{l}^2}\right)^q
\left(1\vee\frac{1}{\delta}\right)^{\left(\left(\frac{\eta'}{\eta'-1}\right)\vee\left(\frac{2\alpha\zeta'-1}{1-4\alpha}\right)\right)\left(1-\frac{q}{\zeta}\right)}\left(1\vee\frac{2\eta'-1}{\zeta'}\right)^{\frac{\eta'}{\eta'-1}}
\times\frac{1}{\left(n_{A_0}\wedge...\wedge n_{A_k}\right)^{\frac{q}{2}\wedge(\alpha\zeta'-1)}}
\end{align}
\normalsize
\normalsize
Letting $\eta'=\frac{\zeta'+2}{4}$, we can now set

\begin{align}\label{hejsan1}
D&= 2^{2q+1}(1+\gamma)^{q+\left(1+\frac{4}{\zeta'+2}\right)\left(1-\frac{q}{\zeta}\right)}(k+1)^{q+\left(1+\frac{2(\alpha\zeta'-1)}{1-4\alpha}\right)\left(1-\frac{q}{\zeta}\right)}p^{(2\psi(\zeta'))\vee\left(q+1+\left(\left(1+\frac{4}{\zeta'+2}\right)\vee\left(\frac{2(\alpha\zeta'-1)}{1-4\alpha}\right)\right)\left(1-\frac{q}{\zeta}\right)\right)}(1+\delta)^{3q}
\nonumber
\\
&\times(1\vee M(2\psi(\zeta'))\vee\tilde{M}(\zeta)\vee M_w(\zeta'))^{1+\frac{4}{\zeta'+2}} (\psi(\zeta')-1)^{\psi(\zeta')/2}
\nonumber
\\
&\times\left(\n G_++\gamma G_\Delta\n_{\mathit{l}^2}\vee\n 
\left(G_++\gamma G_\Delta\right)^{-1}\n_{\mathit{l}^2}\right)^{\left(\left(\left(1+\frac{4}{\zeta'+2}\right)\vee\left(\frac{2\alpha\zeta'-1}{1-4\alpha}\right)\right)\left(1-\frac{q}{\zeta}\right)\right)\vee 3q}
\nonumber
\\
&\times\left(1\vee \n Z_++\gamma Z_\Delta\n_{\mathit{l}^2}\right)^q\left(1\vee\frac{1}{\delta}\right)^{\left(\left(1+\frac{4}{\zeta'+2}\right)\vee\left(\frac{2\alpha\zeta'-1}{1-4\alpha}\right)\right)\left(1-\frac{q}{\zeta}\right)}
\end{align}
\normalsize
and
\tiny
\begin{align*}
&N= \left(\frac{(p+1)(M_w(\zeta')\vee 1)(k+1)^{\zeta'}}{1-\left(\frac{3}{4}\right)^{\frac{1}{k+1}}}\right)^{\frac{1}{\alpha\zeta'-1}}
\\
&\bigvee\left(4p(k+1)\left(1\vee\left(\left(\frac{2(1+\gamma)}{\delta\n \left(G_++\gamma G_\Delta\right)^{-1}\n_{\mathit{l}^2}}\right)^{\frac{1}{\alpha\zeta'\frac{\zeta'-1}{\zeta'+1}}}\left(pM(\zeta+1)^{\frac{1}{\eta'}}\right)\right)^{\frac{1}{\alpha\zeta'\frac{\zeta'-1}{\zeta'+1}}}\sigma_w(\zeta')^{\frac{1}{\alpha\zeta'}}\right)^{\alpha\zeta'-1} 
+
4p(k+1)\left(\frac{(\alpha\zeta'-1)4p^2(k+1)^2}{\delta^2\n \left(G_++\gamma G_\Delta\right)^{-1}\n_{\mathit{l}^2}^2}\right)^{\frac{\alpha\zeta'-1}{1-4\alpha}}\right)^{\frac{1}{\alpha\zeta'-1}}
\end{align*}
\normalsize
\end{proof}
\begin{proof}[Proof of (2)]
This readily follows from Cauchy-Schwartz inequality since
$$\tilde{M}(\zeta)=\E\left[\left|  X^{A_i}_u(l)Y^{A_i}_u \right|^{\zeta}\right]\le \sqrt{\max_{i,l}\E\left[\left|  X^{A_i}_u(l) \right|^{2\zeta}\right]}\sqrt{\max_{i}\E\left[\left|  Y^{A_i}_u \right|^{2\zeta}\right]}\le M(\zeta)\sqrt{\bigvee_{i}\E\left[\left|  Y^{A_i}_u \right|^{2\zeta}\right]}. $$
Replacing $\tilde{M}$ with the right-most expression above, gives us $\tilde{D}$.
\end{proof}

\begin{thebibliography}{99}
\bibitem[Rothenhäusler, Meinshausen, Bühlmann and Peters (2021)]{Rot}
Rothenhäusler D., Meinshausen N., Bühlmann P. and Peters J. (2021). "Anchor regression: Heterogeneous data meet causality," Journal of the Royal Statistical Society Series B, Royal Statistical Society, vol. 83(2), pages 215-246, April.

\bibitem[Kania, Wit (2022)]{kania2022causal}
Kania L.,Wit E. (2022). "Causal Regularization," Preprint on arXiv.

\bibitem[Rio (2009)]{rio2009moment}
Rio, E. "Moment inequalities for sums of dependent random variables under projective conditions," Journal of Theoretical Probability, Springer, vol. 22(1), pages 146--163, April.

\bibitem[Arjovsky, Martin and Bottou, L{\'e}on and Gulrajani, Ishaan (2019)]{arjovsky2019invariant}
Arjovsky, Martin and Bottou, L{\'e}on and Gulrajani, Ishaan. "Invariant risk minimization," arXiv preprint arXiv:1907.02893.

\bibitem[Johansson, Fredrik D and Shalit, Uri and Sontag, David (2016)]{johansson2016learning}
Johansson, Fredrik D and Shalit, Uri and Sontag, David. "Learning representations for counterfactual inference," arXiv preprint arXiv:1605.03661.

\bibitem[Elan Rosenfeld and
	Pradeep Ravikumar and
	Andrej Risteski (2020)]{riskofirm}
Elan Rosenfeld and
	Pradeep Ravikumar and
	Andrej Risteski (2020). "The Risks of Invariant Risk Minimization," CoRR, Royal Statistical Society, vol. 2010.05761.

\bibitem[Rothenhäusler, Dominik and Bühlmann, Peter and Meinshausen, Nicolai (2019)]{causaldantzig}
Rothenhäusler, Dominik and Bühlmann, Peter and Meinshausen, Nicolai (2019). "Causal Dantzig: Fast inference in linear structural equation models with hidden variables under additive interventions," Annals of Statistics, The Institute of Mathematical Statistics, vol. 47(3), pages 1688--1722, June.

\bibitem[Rojas-Carulla, Mateo and Sch\"{o}lkopf, Bernhard and Turner, Richard and Peters, Jonas (2018)]{rojas}
Rojas-Carulla, Mateo and Sch\"{o}lkopf, Bernhard and Turner, Richard and Peters, Jonas (2018). "Invariant Models for Causal Transfer Learning," J. Mach. Learn. Res., JMLR.org, vol. 19(1), pages 1309–1342, January.

\bibitem[Williams, D. (1991)]{williams}
Williams, D. (1991). "Probability with Martingales", Cambridge University Press.
\end {thebibliography}
\bibliographystyle{imsart-nameyear} 


\end{document}